\let\oldmarginpar\marginpar
\renewcommand\marginpar[1]
\newcommand{\la}{\langle}
\newcommand{\ra}{\rangle}
\newtheorem{theorem}{\bf Theorem}[section]
\newtheorem{lemma}[theorem]{\bf Lemma}
\newtheorem{remark}[theorem]{\bf Remark}
\newtheorem{definition}[theorem]{\bf Definition}
\renewcommand{\AA}{{\Bbb A}}
\newcommand{\CC}{{\Bbb C}}
\newcommand{\EE}{{\Bbb E}}
\newcommand{\II}{{\Bbb I}}
\newcommand{\NN}{{\Bbb N}}
\newcommand{\OO}{{\Bbb O}}
\newcommand{\RR}{{\Bbb R}}
\newcommand{\VV}{{\Bbb V}}
\newcommand{\ZZ}{{\Bbb Z}}
\newcommand{\Gg}{{\mathfrak G}}
\newcommand{\Xx}{{\mathfrak X}}
\newcommand{\ggreat}{>\kern-.7ex>}
\newcommand{\ssmall}{<\kern-.7ex<}
\newcommand{\qu}{/\kern-.7ex/}
\newcommand{\exh}{\to\kern-1.8ex\to}
\newcommand{\aA}{{\EuScript{A}}}
\newcommand{\cC}{{\EuScript{C}}}
\newcommand{\eE}{{\EuScript{E}}}
\newcommand{\fF}{{\EuScript{F}}}
\newcommand{\gG}{{\EuScript{G}}}
\newcommand{\mM}{{\EuScript{M}\mathrm{et}}}
\newcommand{\MMM}{{\EuScript{M}}}
\newcommand{\nN}{{\EuScript{N}}}
\newcommand{\oO}{{\EuScript{O}}}
\newcommand{\uU}{{\EuScript{U}}}
\newcommand{\vV}{{\EuScript{V}}}
\newcommand{\xX}{{\EuScript{X}}}
\newcommand{\yY}{{\EuScript{Y}}}
\newcommand{\GL}{\operatorname{GL}}
\newcommand{\Aut}{\operatorname{Aut}}
\newcommand{\Crit}{\operatorname{Crit}}
\newcommand{\Diff}{\operatorname{Diff}}
\newcommand{\End}{\operatorname{End}}
\newcommand{\Id}{\operatorname{Id}}
\newcommand{\Ind}{\operatorname{Ind}}
\newcommand{\Ker}{\operatorname{Ker}}
\newcommand{\Lie}{\operatorname{Lie}}
\newcommand{\Map}{\operatorname{Map}}
\newcommand{\supp}{\operatorname{supp}}
\newcommand{\reg}{\operatorname{reg}}
\newcommand{\ov}{\overline}
\newcommand{\wh}{\widehat}
\newcommand{\wt}{\widetilde}
\title[Automorphisms of generic gradient vector fields with symmetries]
{Automorphisms of generic gradient vector fields with prescribed finite symmetries}
\author{Ignasi Mundet i Riera}
\address{Facultat de Matem\`atiques i Inform\`atica\\
Universitat de Barcelona\\
Gran Via de les Corts Catalanes 585\\
08007 Barcelona \\
Spain}
\email{ignasi.mundet@ub.edu}
\date{October 30, 2017}
\subjclass[2010]{57S17,37C10}
\thanks{This work has been partially supported by the (Spanish) MEC Project MTM2015-65361-P MINECO/FEDER, UE}
\begin{document}

\maketitle

\begin{abstract}
Let $M$ be a compact and connected smooth manifold endowed with a smooth
action of a finite group $\Gamma$, and let $f$ be a
$\Gamma$-invariant Morse function on $M$. We prove that the space
of $\Gamma$-invariant Riemannian metrics on $M$ contains a
residual subset $\mM_f$ with the following property. Let
$g\in\mM_f$ and let $\nabla^gf$ be the gradient vector field of $f$
with respect to $g$. For any diffeomorphism $\phi\in\Diff(M)$ preserving
$\nabla^gf$ there exists some $t\in\RR$ and some
$\gamma\in\Gamma$ such that for every $x\in M$ we have
$\phi(x)=\gamma\,\Phi_t^g(x)$, where $\Phi_t^g$ is the time-$t$
flow of the vector field $\nabla^gf$.
\end{abstract}

\tableofcontents

\section{Introduction}
Define the automorphism group of a vector field on a smooth manifold to be
the group of diffeomorphisms of the manifold preserving the vector field.
A natural question is how small the automorphism group of a vector field
can be. Suppose that we only consider vector fields which are invariant
under a fixed finite group action on the manifold.
In this situation, the automorphism group of the vector field always includes
the action of the finite group and the flow of the vector field.
Our main result implies that if the manifold is compact
and connected then
the set of invariant gradient vector fields whose automorphism
group contains nothing more than this is residual in the set of all invariant gradient vector fields.
(Recall that a residual subset is a countable intersection of
dense open subsets. Since the space of smooth invariant
gradient vector fields is Baire\footnote{See
e.g. \cite[Chap. 2, Theorem 4.4]{H} and the comments afterwards.}, any of its
residual subsets is dense.)

Let us explain in more concrete terms our motivation and main result.

Take $M$ to be a smooth (=$\cC^{\infty}$) manifold, and denote
by $\Xx(M)$ the vector space of smooth vector fields on $M$,
endowed with the $\cC^{\infty}$ topology. Denote the
automorphism group of a vector field $\xX\in\Xx(M)$ by
$$\Aut(\xX)=\{\phi\in\Diff(M)\mid \phi_*\xX=\xX\}.$$
If $\xX\neq 0$ then $\Aut(\xX)$ contains a central subgroup
isomorphic to $\RR$, namely the flow generated by $\xX$. Denote
by $\Aut(\xX)/\RR$ the quotient of $\Aut(\xX)$ be this
subgroup.

Suppose that $M$ is endowed with a smooth and effective action
of a finite group $\Gamma$.  Let $\Xx(M)^{\Gamma}\subset\Xx(M)$
be the space of $\Gamma$-invariant vector fields on $M$. F.J.
Turiel and A. Viruel proved recently in \cite{TV} that there
exists some $\xX\in\Xx(M)^{\Gamma}$ such that
$\Aut(\xX)/\RR\simeq \Gamma$. The vector field $\xX$ is given
explicitly in \cite{TV} as a gradient vector field for a
carefully constructed Morse function and a suitable Riemannian
metric. One may wonder, in view of that result, whether the set
of $\xX\in\Xx(M)^{\Gamma}$ satisfying $\Aut(\xX)/\RR\simeq
\Gamma$ is generic in some sense, at least if one restricts to
some particular family of vector fields (such as gradient
vector fields, for example). Here we give an affirmative answer
to this question assuming $M$ is compact and connected.

Suppose, for the rest of the paper, that $M$ is compact and connected. For
any function $f\in\cC^{\infty}(M)$ and any Riemannian metric
$g$ on $M$ we denote by $\nabla^gf\in\Xx(M)$ the gradient of
$f$ with respect to $g$, defined by the condition that
$g(\nabla^gf,v)=df(v)$ for every $v\in TM$. The following is
our main result.

\begin{theorem}
\label{thm:main}
Let $\mM\subset\cC^{\infty}(M,S^2T^*M)$ denote the space of
Riemannian metrics on $M$, with the $\cC^{\infty}$ topology,
and let $\mM^{\Gamma}\subset\mM$ denote the space of
$\Gamma$-invariant metrics. Let $f$ be a $\Gamma$-invariant
Morse function on $M$. There exists a residual subset
$\mM_f\subset\mM^{\Gamma}$ such that any $g\in\mM_f$ satisfies
$\Aut(\nabla^gf)/\RR\simeq \Gamma$.
\end{theorem}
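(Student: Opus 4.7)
The plan is to define $\mM_f$ as the intersection of a short list of $\Gamma$-equivariant genericity conditions on $g$, each ruling out a specific type of extra symmetry of $\nabla^g f$. First I would impose the $\Gamma$-equivariant Morse-Smale condition (stable and unstable manifolds of the gradient flow meet transversely for every pair of critical points), which is residual in $\mM^\Gamma$ by the usual Smale argument using compactly supported $\Gamma$-invariant perturbations of $g$. I would then impose that the Hessian of $f$ at each $p \in \Crit(f)$, viewed through $g$ as a symmetric endomorphism of $T_pM$, has simple non-resonant spectrum; this is residual because for a fixed Morse $f$ the Hessian spectrum depends nontrivially on $g_p$, and can be altered by a $\Gamma$-invariant variation of $g$ supported near the $\Gamma$-orbit of $p$. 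Finally, for each pair $(p, q)$ of critical points of the same Morse index lying in distinct $\Gamma$-orbits, I would require that the local flow-data of $\nabla^g f$ at $p$ (Hessian spectrum, plus finer combinatorial data of the outgoing flow lines) differ from those at $q$; only finitely many such pairs occur, so the resulting $\mM_f$ is a countable intersection of residual subsets and is again residual.

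Now let $g \in \mM_f$ and let $\phi \in \Aut(\nabla^g f)$. Since $\phi$ permutes $\Crit(f)$ and conjugates linearizations of $\nabla^g f$ at $p$ to those at $\phi(p)$, the third condition above forces $\phi(p)$ to lie in the $\Gamma$-orbit of $p$, and an analogous descent argument rules out non-$\Gamma$ permutations of each individual $\Gamma$-orbit. Hence there is $\gamma \in \Gamma$ with $\phi(p) = \gamma\cdot p$ for all $p \in \Crit(f)$. Replacing $\phi$ by $\gamma^{-1}\phi$, I may assume $\phi$ fixes $\Crit(f)$ pointwise.

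It remains to show such $\phi$ equals $\Phi_t^g$ for some $t \in \RR$. Because $\nabla^g f$ is nonvanishing on $M\setminus\Crit(f)$ and has no closed orbits, the equation $\phi(x) = \Phi_{\tilde t(x)}^g(x)$ uniquely defines a smooth function $\tilde t:M\setminus\Crit(f)\to\RR$, which, since $\phi$ commutes with the flow, is constant on each flow line. By non-resonance and Sternberg's linearization theorem, near each $p \in \Crit(f)$ the vector field is smoothly conjugate to $\sum_i\lambda_i^p x_i\,\partial_{x_i}$, and the relation $\phi_*\nabla^g f = \nabla^g f$ together with non-resonance forces $\phi$ in these coordinates to be a diagonal map $x_i\mapsto a_i^p x_i$ with $a_i^p\in\RR^*$. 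Matching $\phi$ to $\Phi_{t_c}^g$ along each flow line $c$ emerging from $p$ in the $i$-th eigendirection gives $a_i^p = e^{\lambda_i^p t_c}$; propagating $t_c$ through the Morse-Smale chain of unstable manifolds forces a single $t$ to work at every critical point and in every eigendirection. Hence $\tilde t\equiv t$ on the connected set $M\setminus\Crit(f)$, giving $\phi = \Phi_t^g$ and so $\phi = \gamma\,\Phi_t^g$ as required.

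The main obstacle I anticipate is the second step: eliminating all non-$\Gamma$ permutations of $\Crit(f)$ requires $\Gamma$-equivariant perturbations of $g$ that alter Morse-Smale data along one $\Gamma$-orbit without disturbing another orbit, and one must enumerate the combinatorial possibilities carefully enough to keep everything residual. The gluing argument of the third paragraph is more technical but essentially standard once non-resonance and smooth linearization are available.
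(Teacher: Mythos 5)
Your first and third genericity conditions, and the appeal to Sternberg's theorem near the critical points, are broadly in the spirit of the paper's argument. But your second condition --- that the spectrum of $D\nabla^gf(p)=g_p^{-1}\Hess f(p)$ can be made \emph{simple} by a $\Gamma$-invariant perturbation of $g$ --- is false whenever the isotropy representation of $\Gamma_p$ on $T_pM$ contains an irreducible summand of dimension greater than one. Since both $f$ and $g$ are $\Gamma$-invariant, $D\nabla^gf(p)$ commutes with $\Gamma_p$, and Schur's lemma forces each of its eigenvalues to occur with multiplicity at least the dimension of the corresponding irreducible summand; for instance, for $\ZZ/3$ acting by rotation on a surface, $D\nabla^gf(p)$ at a fixed point is necessarily a scalar. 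This is not a technicality: it is precisely the difficulty the paper flags in the introduction, and the reason the non-equivariant arguments of Kopell and Palis--Yoccoz (which assume distinct eigenvalues) cannot be transplanted directly.

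The gap then propagates to your final step. Sternberg's theorem together with the vector-field version of Kopell's theorem identify $\Aut(\nabla^gf|_{W^s_g(p)})$ only with the centralizer $Z(D\nabla^gf(p))$, which in the presence of repeated eigenvalues is a non-abelian Lie group much larger than the diagonal torus; your claim that $\phi$ must be diagonal $x_i\mapsto a_ix_i$ in linearizing coordinates therefore fails, and with it the matching $a_i=e^{\lambda_i t_c}$ along flow lines. The heart of the paper is an additional countable family of open dense conditions $\mM_{1,K}$, obtained by a Sard/transversality argument on $r$-fold products of the sphere of flow lines $S_g(p)$, which force any element of the centralizer that extends to a global automorphism (i.e.\ that is also compatible with the unstable manifolds of the sources it meets) to lie in $\Gamma_p\cdot\{e^{tD\nabla^gf(p)}\}$. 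Nothing in your proposal plays this role, so the argument as written does not exclude, say, a rotation inside a repeated eigenspace of $T_pM$ extending to an automorphism of the gradient field. (Separately, the case $\dim M=1$, where the paper needs the orientation invariant $\chi$ to handle cyclic $\Gamma$, is not covered by your scheme either, but that is minor compared with the point above.)
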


By a result of Wasserman \cite[Lemma 4.8]{W}, the space of
$\Gamma$-invariant Morse functions on $M$ is open and dense
within the space of all $\Gamma$-invariant smooth functions on
$M$ (for openness see the comments before Lemma 4.8 in
\cite{W}). Combining this result with Theorem \ref{thm:main} it
follows that if $\Gg(M)^{\Gamma}\subset\Xx(M)^{\Gamma}$ denotes
the set of $\Gamma$-invariant gradient vector fields, then the
set of $\xX\in\Gg(M)^{\Gamma}$ satisfying
$\Aut(\xX)/\RR\simeq\Gamma$ contains a residual subset in
$\Gg(M)^{\Gamma}$.

Probably Theorem \ref{thm:main} can be proved as well for most
proper $\Gamma$-invariant Morse functions on open manifolds
endowed with an smooth effective action of a finite group.
However, there are exceptions: if $M=\RR^n$ with $n>1$, and
$\Gamma$ is a finite group acting linearly on $M$ preserving
the standard euclidean norm, then $f:M\to\RR$, $f(v)=\|v\|^2$,
is a $\Gamma$-invariant Morse function, and for any
$\Gamma$-invariant Riemannian metric $g$ on $M$,
$\Aut(\nabla^gf)/\RR$ is bigger than $\Gamma$. This follows
from a result of Sternberg, see Theorem \ref{thm:Sternberg}
below.

For the case $\dim M=1$ (i.e., when $M$ is the circle) we prove
a stronger form of Theorem \ref{thm:main}, where residual is
replaced by open and dense, see Theorem \ref{thm:circle}. It is
not inconceivable that this can be done in all dimensions:
while the author has not managed to do so, he does not have any
reason to suspect that it might be false. In fact, a theorem of
Palis and Yoccoz answering an analogous question (in the
non-equivariant setting), with the set of gradient vector fields $\xX(M)$
replaced by a certain set of diffeomorphisms may suggest that it is true.
To be precise, let $\aA_1(M)\subset\Diff(M)$ be the (open) set of
Axiom A diffeomorphisms satisfying the transversality condition and
having a sink or a source. Palis and Yoccoz prove in \cite{PY1} that
the set of diffeomorphisms with the smallest possible centralizer
contains a $\cC^{\infty}$ open and dense subset of $\aA_1(M)$.
Note that the set $\aA_1(M)$ includes Morse--Smale diffeomorphisms, which are
analogues for diffeomorphisms of gradient vector fields.
However, if one considers the same question for the entire
diffeomorphism group endowed with the $\cC^1$ topology then openness
never holds, see \cite{BCW2}.

Before explaining the main ideas in the proof of Theorem
\ref{thm:main}, let us discuss some related problems. A natural
question is whether the property proved in Theorem
\ref{thm:main} is true when replacing the space of invariant
gradient vector fields by the entire space of invariant vector
fields.

\noindent{\bf Problem A.}
Does the set of $\xX\in\Xx(M)^{\Gamma}$ satisfying $\Aut(\xX)/\RR\simeq\Gamma$
contain a residual subset of $\Xx(M)^{\Gamma}$?

Problem A includes Theorem \ref{thm:main} as a particular case,
since $\Gg(M)^{\Gamma}$ is open in $\Xx(M)^{\Gamma}$. Note that
the case $\Gamma=\{1\}$ of Problem A (or even Theorem
\ref{thm:main}) is far from being trivial.

Define the centralizer $Z(\xX)$ of a vector field $\xX$ on $M$
to be the group of diffeomorphisms of $M$ that send orbits of
$\xX$ onto orbits of $\xX$. For any $\xX$, $\Aut(\xX)$ is a
subgroup of $Z(\xX)$, and one may try to explore analogues of
Theorem \ref{thm:main} and Problem A for $Z(\xX)$. However,
even the right question to ask is not clear in this situation.
P.R. Sad \cite{S} studied the case $\Gamma=\{1\}$. His main
result is that for a compact $M$ there is an open and dense
subset $\aA'$ of the set of Morse--Smale vector fields
$\aA\subset\Xx(M)$ such that for any $\xX\in\aA'$ there is a
neighborhood $V\subset\Diff(M)$ of the identity with the
property that any $\phi\in V\cap Z(\xX)$ preserves the orbits
of $\xX$. Unfortunately the restriction to a
neighborhood of the identity in $\Diff(M)$ can not be removed,
as Sad shows with an example.

It is natural to consider analogues of the previous problems replacing vector fields
by diffeomorphisms.
Define the automorphism group of a diffeomorphism $\phi\in\Diff(M)$ to be its
centralizer, i.e.,
$\Aut(\phi)=\{\psi\in\Diff(M)\mid \phi\psi=\psi\phi\}$.
Then $\la\phi\ra=\{\phi^k\mid k\in\ZZ\}$ is a central subgroup of $\Aut(\phi)$.
Let
$$\Diff^{\Gamma}(M)=\{\phi\in\Diff(M)\mid \phi\text{ commutes with the action of $\Gamma$}\}.$$

\noindent{\bf Problem B.}
Does the set of $\phi\in\Diff^{\Gamma}(M)$ such that
$\Aut(\phi)/\la\phi\ra\simeq\Gamma$ contain a residual subset
of $\Diff^{\Gamma}(M)$?

Of course, a positive answer to Problem B does not imply a
positive answer to Problem A, since a diffeomorphism $\phi$
such that $\Aut(\phi)/\la\phi\ra=\Gamma$ can not possibly
belong to the flow of a vector field (for otherwise
$\Aut(\phi)$ should contain a subgroup isomorphic to $\RR$).

One may consider restricted versions of Problem B involving particular diffeomorphisms,
for example, equivariant Morse--Smale diffeomorphisms \cite{field}. These are very particular diffeomorphisms,
but Problem B is already substantially nontrivial for them (even in the case $\Gamma=\{1\}$,
see below).

Problems A and B admit variations in which the regularity of
the vector fields or the diffeomorphisms is relaxed from
$\cC^{\infty}$ to $\cC^r$ for finite $r$. One can also consider
stronger questions replacing {\it residual} by {\it open and
dense} or weaker ones replacing {\it residual} by {\it dense}.

The case $\Gamma=\{1\}$ of Problem B is a famous question of
Smale. It appeared for the first time in \cite[Part IV, Problem (1.1)]{S0},
in more elaborate form in \cite{S1}, and it was included in his list of 18 problems
for the present century \cite{S2}. It was solved for
Morse--Smale $\cC^1$-diffeomorphisms by Togawa \cite{T} and
very recently for arbitrary $\cC^1$-diffeomorphisms by C.
Bonatti, S. Crovisier, A. Wilkinson in \cite{BCW} (see the
survey \cite{BCW1} for further references). The analogous
problem for higher regularity diffeomorphisms is open at
present, although there are by now plenty of partial results:
see e.g. \cite{K} for the case of the circle, \cite{PY1} for
elements in the set $\aA_1(M)$ defined above,
and \cite{PY2} for Anosov diffeomorphisms of tori.

Theorem \ref{thm:main} may be compared to similar results for
other types of tensors. For example, it has been proved in
\cite{mounoud} that on a compact manifold the set of metrics of
fixed signature with trivial isometry group is open and dense
in the space of all such metrics (see also \cite{CMR} for an
infinitesimal version of this with the compactness condition
removed).

\subsection{Main ideas of the proof}
To prove Theorem \ref{thm:main} we treat separately the cases
$\dim M=1$ and $\dim M>1$.
The case $\dim M=1$ is addressed in Section \ref{s:circle},
using rather ad hoc methods. An interesting ingredient is
an invariant of vector fields which, when nonzero,
distinguishes changes of orientation, and which plays an
important role in the classification up to diffeomorphisms of
vector fields on $S^1$ with nondegenerate zeroes.

The main ingredient in the case $\dim M>1$, common to other
papers addressing similar problems, is
a theorem of Sternberg \cite{St} on linearisation of
vector fields near sinks and sources, assuming there are no
resonances. The use of this result in this kind of problems
goes back to work of Kopell \cite{K}, and appears in papers of
Anderson \cite{A1} and Palis and Yoccoz \cite{PY1} among
others.
To apply this theorem in our situation we need to generalize
it to the equivariant setting under the presence of finite
symmetries. This poses some difficulties. For example, in
the equivariant case we can not suppose that the eigenvalues of
the linearisation of a generic vector field at fixed points are
all different: high multiplicities can not be avoided; in
particular, the centraliser of the linearisation is not
necessarily abelian (both Anderson \cite{A1} and Palis--Yoccoz
\cite{PY1} restrict themselves to the case in which the
eigenvalues are different). This is relevant for example when
extending the version of Sternberg's theorem for families
proved by Anderson to the equivariant setting (see Section
\ref{s:sternberg} for details on this).

We close this subsection with a more concrete description of the
proof of the case $\dim M>1$. Suppose a $\Gamma$-invariant Morse function $f$ has been chosen.
The set of metrics $\mM_f$ is defined as the intersection
of a set of invariant metrics, $\mM_0$, and a
countable sequence of subsets $\{\mM_{1,K}\}_{K\in\NN}$. Each of these sets is open and dense in $\mM^{\Gamma}$.

The metrics $g\in \mM_0$, defined in Subsection \ref{ss:mM-0},
have two properties: (1) the eigenvalues of the differential of
$\nabla^gf$ at each critical point are as much different among
themselves as they can be (in particular, the collection of
eigenvalues at two critical points coincide if and only if the
two points belong to the same $\Gamma$-orbit), and (2) there
are no resonances among eigenvalues at any critical point. The
second property allows us to use Sternberg's theorem on
linearisation on neighborhoods of sinks and sources, and a
theorem of Kopell which limits enormously the automorphisms of
the gradient vector field restricted to (un)stable manifolds of
sinks/sources.

The metrics $g\in\mM_{1,K}$, defined in Subsection \ref{s:mM-2-K}, have the following property.
Suppose that  $p$ is a sink and $W_g^s(p)$ is the stable manifold of $p$ for $\nabla^gf$, and that $q$ is a source and $W_g^u(q)$ is its unstable manifold for $\nabla^gf$. If $W_g^s(p)\cap W_g^u(q)$ is nonempty, then any automorphism of $\nabla^gf|_{W_g^s(p)}$  whose derivative at $p$ is at distance $\leq K$ from the identity and which matches on $W_g^s(p)\cap W_g^u(q)$
with an automorphism of $\nabla^gf|_{W_g^u(q)}$ is at distance $<K^{-1}$ from an automorphism coming from the action of $\Gamma$ and the flow of $\nabla^gf$.

After defining these sets of metrics, in Section \ref{s:proof-thm:main} we prove
Theorem \ref{thm:main} for manifolds of dimension greater than one, showing that if $g\in\mM_f$ then
$\Aut(\nabla^gf)/\RR\simeq\Gamma$.

The paper concludes with two appendices. The first one gives the proof of a technical result on the variation of the gradient flow of $\nabla^gf$ with respect to variations of $g$, and the second one contains a glossary of the notation used to address the case $\dim M>1$
(Section \ref{s:sternberg} and the next ones).

\noindent{\bf Acknowledgement.} I am very grateful to the
referee for pointing out a substantial simplification in the
proof of the main theorem, which was originally much longer and
more involved, and for detecting a number of mistakes and
suggesting improvements.

\section{Proof of Theorem \ref{thm:main} for $\dim M=1$}
\label{s:circle}

In this section we prove a strengthening of
the case $\dim M=1$ of Theorem \ref{thm:main}. More concretely, in Subsection
\ref{ss:proof-thm:circle} below we prove the following.

\begin{theorem}
\label{thm:circle}
Suppose that a finite group $\Gamma$ acts smoothly and effectively on $S^1$. Let $f$ be a $\Gamma$-equivariant Morse function on $S^1$. Let $\mM^{\Gamma}$ denote the set of $\Gamma$-invariant Riemannian metrics on
$S^1$, endowed with the $\cC^{\infty}$ topology. There exists a dense and open subset $\mM_f\subset\mM^{\Gamma}$ such that for every $g\in\mM_f$
we have $\Aut(\nabla^gf)/\RR\simeq\Gamma$.
\end{theorem}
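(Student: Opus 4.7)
The plan is to construct $\mM_f$ as the intersection of two explicit open dense subsets of $\mM^{\Gamma}$. Let $C$ denote the $\Gamma$-invariant set of critical points of $f$, partitioned into minima $C^-$ and maxima $C^+$ alternating cyclically around $S^1$; let $\aA$ be the set of open arcs between consecutive points of $C$, each with a source in $C^-$ and a sink in $C^+$ for the vector field $X_g:=\nabla^g f$. Any $\phi\in\Aut(X_g)$ commutes with the flow $\Phi_t^g$, hence permutes $C$ preserving $C^{\pm}$, permutes $\aA$, and conjugates the linearization of $X_g$ at $c$ to that at $\phi(c)$, forcing equality of the eigenvalues $\lambda(c,g):=|f''(c)|/g(c,c)$. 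The induced permutation of $C$ lies in the finite dihedral group $D_m\subset \mathrm{Homeo}(S^1)$ of combinatorial symmetries of the cyclically ordered bi-coloured set $(C,C^{\pm})$, and $\Gamma$ embeds in $D_m$ via its action on $C$.

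Let $\mM_1\subset\mM^{\Gamma}$ consist of those $g$ with $\lambda(c,g)=\lambda(c',g)$ iff $c,c'$ lie in the same $\Gamma$-orbit; this is open (finitely many pairwise inequalities) and dense (the values $\{g(c,c)\}_{c\in C}$ are only constrained to be constant on $\Gamma$-orbits and may be perturbed freely by $\Gamma$-invariant bumps supported near those orbits). For $g\in\mM_1$, every $\phi\in\Aut(X_g)$ induces some $\sigma\in D_m$ that respects $\Gamma$-orbits on $C$; a direct check using that a rotation of $S^1$ preserving every $\Gamma$-orbit of $C$ must itself be an element of the rotational subgroup $\Gamma^+\subset\Gamma$ shows that the orientation-preserving such $\sigma$ already lie in $\Gamma$. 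The remaining candidates are therefore reflections $\sigma\in D_m\setminus\Gamma$ that respect $\Gamma$-orbits.

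To handle these, attach to each $A\in\aA$ with source $q$ (eigenvalue $\lambda$) and sink $p$ (eigenvalue $\mu$) the intrinsic real invariant
$$
\tau(A,g)=\lim_{\varepsilon\to 0^+}\Bigl[\,T_A^{\varepsilon}(g)-\tfrac{1}{\lambda}\log\tfrac{1}{\varepsilon}-\tfrac{1}{\mu}\log\tfrac{1}{\varepsilon}\,\Bigr],
$$
where $y$ and $w$ are Morse coordinates at $q,p$ normalized by $f-f(q)=y^2/2$ and $f-f(p)=-w^2/2$ (the sign of each fixed by the side of $q,p$ on which $A$ lies) and $T_A^{\varepsilon}(g)$ is the $X_g$-flow time from $y=\varepsilon$ to $w=\varepsilon$ along $A$. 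A straightforward computation with $\dot y=y/g(y)$ near $q$ shows the limit exists, depends continuously on $g$, and can be made to take any prescribed value by a $\Gamma$-equivariant perturbation of $g$ supported in the interior of $\Gamma A$, with no effect on eigenvalues or on the $\tau$'s of arcs outside $\Gamma A$. For each eligible $\sigma\in D_m$, the condition that $\sigma$ extend to some $\phi\in\Aut(X_g)$ becomes, via Sternberg linearization with scalings $\alpha_c>0$ at each $c\in C$, the linear system
$$
\mu\log\alpha_{q(A)}+\lambda\log\alpha_{p(A)}=\lambda\mu\bigl(\tau(\sigma A,g)-\tau(A,g)\bigr),\qquad A\in\aA,
$$
whose matrix has rank $|\aA|-1$ with kernel spanned by the flow. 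Solvability is a single linear constraint on the right-hand side, and an explicit recursive calculation of the cokernel vector $b\in\RR^{\aA}$ (which, up to scalar, has entries $b_{A_i}=(-1)^{i-1}\prod(\text{eigenvalue ratios})$) shows that eligible rotations act on $b$ by $+1$ (their constraint is trivial, consistent with their already lying in $\Gamma$) while eligible reflections act by $-1$, so their solvability condition reduces to
$$
I(g):=\sum_{A\in\aA}b_A(g)\,\tau(A,g)=0.
$$
Define $\mM_2\subset\mM^{\Gamma}$ as the complement of this closed condition $I(g)=0$ (empty if no reflection in $D_m\setminus\Gamma$ is eligible, in which case $\mM_2=\mM^{\Gamma}$). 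It is open by continuity of $I$, and dense by the free perturbability of $\tau(A,g)$ which allows $I$ to be shifted to any nonzero value.

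Setting $\mM_f=\mM_1\cap\mM_2$, for any $g\in\mM_f$ and $\phi\in\Aut(X_g)$ the induced permutation $\sigma$ of $C$ must lie in $\Gamma$: by $\mM_1$ and the rotation argument for orientation-preserving $\sigma$, by $\mM_2$ for orientation-reversing $\sigma$. Choosing $\gamma\in\Gamma$ realising $\sigma$ and replacing $\phi$ by $\gamma^{-1}\phi$ reduces to the homogeneous case $\sigma=\Id$; the one-dimensional kernel of the linear system, spanned by the flow, then yields $\gamma^{-1}\phi=\Phi_t^g$ for some $t\in\RR$, giving $\Aut(X_g)/\RR\simeq\Gamma$. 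The main obstacle is the cokernel analysis: establishing the $+1/-1$ dichotomy for the action of eligible $\sigma\in D_m$ on $b$ in full generality, and in particular checking that the invariant $I$ (which is precisely the ``orientation-distinguishing invariant'' alluded to in the introduction) is not forced to vanish identically in configurations where extra reflections could appear. A careful treatment of Morse coordinates at critical points fixed by elements of $\Gamma$, where sign ambiguities interact with the $\Gamma$-equivariance of the $\tau$'s, is also required.
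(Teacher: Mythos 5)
Your strategy is essentially the paper's: a first open dense set where the eigenvalues of $D\nabla^gf$ at critical points separate the $\Gamma$-orbits, a second one where a real ``orientation-detecting'' invariant of the vector field is nonzero, and then local rigidity at nondegenerate zeros (the germ automorphism group is $\RR^*$ via the derivative) plus rigidity on arcs to finish. The packaging differs: the paper defines its invariant $\chi$ as an unweighted sum of flow times normalized by local involutions at the zeros, proves directly that $\chi(\psi_*\xX)=-\chi(\xX)$ for orientation-reversing $\psi$, and splits into the cyclic and dihedral cases; you instead derive the invariant as the cokernel pairing obstructing solvability of a linear system for the scalings $\alpha_c$, which is a legitimate (and arguably more systematic) route, but it pushes all the content into the cokernel computation that you yourself leave open.

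Beyond that acknowledged gap, there is a genuine unaddressed problem in the density of $\mM_2$. A $\Gamma$-equivariant perturbation of $g$ supported on the orbit $\Gamma A$ changes \emph{every} $\tau(A')$ with $A'\in\Gamma A$ by the same amount $\delta$, so it changes $I$ by $\delta\sum_{A'\in\Gamma A}b_{A'}$ — and since your $b_{A'}$ alternate in sign around the circle, this orbit sum could a priori vanish, in which case $I$ cannot be moved at all. It does vanish identically when $\Gamma$ contains a reflection (this is exactly the paper's remark that $\chi(\nabla^gf)=0$ for every invariant metric when $\Gamma$ is dihedral), so your density claim is false as stated in that case. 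The argument is rescued only by two facts you do not record: (i) eligible reflections outside $\Gamma$ can exist only when $\Gamma$ acts by rotations (an eligible reflection composed with a reflection of $\Gamma$ is an eligible rotation, hence lies in $\Gamma^+$, forcing the original reflection into $\Gamma$), so $\mM_2$ is only a nontrivial condition in the rotational case; and (ii) for rotational $\Gamma$ the induced shift on the cyclically indexed arcs is even (because each $\Gamma$-orbit of maxima has $|\Gamma|$ elements), so the signs of $b$ are constant on $\Gamma$-orbits of arcs and the orbit sum is nonzero. You need to add both points (and the related parity check that a colour-preserving reflection sends $A_i$ to $A_{j-i}$ with $j$ odd, which is what makes reflections act by $-1$ on $b$) for the proof to close; the paper avoids all of this by handling the dihedral case without any $\chi$-type condition, simply composing with an orientation-reversing element of $\Gamma$.
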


\subsection{Classifying nondegenerate vector fields on the circle}
To prove Theorem \ref{thm:circle} we will need, in the case
when $\Gamma$ is generated by a rotation, an invariant of
nondegenerate vector fields on the circle that detects change
of orientations. This invariant is one of the ingredients of
the classification of nondegenerate vector fields on the circle
up to orientation preserving diffeomorphism. Detailed
expositions of this classification (in the broader context of
vector fields with zeroes of finite order) have appeared in
\cite{By,Hi}. Here we briefly explain the main ideas of this
result, focusing on the definition of the invariant, both for
completeness and to set the notation for later use.

For any $t\in\RR$ and  vector field $\xX$ we denote by $\Phi_t^{\xX}\in\Aut(\xX)$ the flow of $\xX$
at time $t$.

We first consider the local classification of vector fields with a nondegenerate zero.
For any nonzero real number $\lambda$ we denote by $\fF_{\lambda}$ the set of germs of
vector fields on a neighborhood of $0$ in $\RR$ of the form $h\,\partial_x$, where
$h(0)=0$ and $h'(0)=\lambda$ and $x$ is the standard coordinate in $\RR$. Let $\gG$ denote
the group of germs of diffeomorphisms of neighborhoods of $0$ in $\RR$.
For any $\xX\in\fF_{\lambda}$ we denote by $\Aut(\xX)$ the group of all $\phi\in\gG$ such that
$\phi_*\xX=\xX$. For example, $\Phi_t^{\xX}\in\Aut(\xX)$ for every $t$.
The proof of the next lemma follows from a straightforward computation and Cauchy's theorem
on ODE's.

\begin{lemma}
\label{lemma:local-models} Let $\lambda,\mu$ be nonzero real numbers.
\begin{enumerate}
\item Given $\xX\in\fF_{\lambda}$ and $\yY\in\fF_{\mu}$ there exists some $\phi\in\gG$
satisfying $\phi_*\xX=\yY$ if and only if $\lambda=\mu$.
\item For any $\lambda$ and $\xX\in\fF_{\lambda}$ the map $D:\Aut(\xX)\to\RR^*$ sending
$\phi$ to $\phi'(0)$ is an isomorphism of groups. Furthermore, $D\Phi_t^{\xX}=e^{\lambda t}$.
\end{enumerate}
\end{lemma}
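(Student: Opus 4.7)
The plan is to reduce both parts of the lemma to the single assertion that every germ $\xX\in\fF_\lambda$ is conjugate to the standard linear germ $\xX_\lambda:=\lambda x\,\partial_x$. Once this linearization is in hand, part (1) follows by a short chain-rule argument (for the ``only if'' direction) combined with composition of linearizing diffeomorphisms (for the ``if'' direction), and part (2) reduces to computing $\Aut(\xX_\lambda)$ and its flow explicitly.

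First I would dispose of the ``only if'' in (1). If $\phi_*\xX=\yY$ with $\xX=h\,\partial_x\in\fF_\lambda$ and $\yY=k\,\partial_x\in\fF_\mu$, then $\phi(0)=0$ since both germs vanish only at the origin and $\phi$ is a local diffeomorphism; writing the pushforward as $(\phi_*\xX)(y)=\phi'(\phi^{-1}(y))\,h(\phi^{-1}(y))$ and differentiating at $y=0$ using $h(0)=0$ yields $\mu=k'(0)=\phi'(0)\,h'(0)\,(\phi^{-1})'(0)=h'(0)=\lambda$.

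Next I would establish the linearization. Seeking $\psi\in\gG$ with $\psi(0)=0$, $\psi'(0)=1$ and $\psi_*\xX_\lambda=\xX$ amounts to the ODE
$$\lambda y\,\psi'(y)=h(\psi(y)),\qquad \psi(0)=0.$$
Writing $h(x)=\lambda x+x^2 r(x)$ with $r$ smooth (Hadamard's lemma) and substituting the ansatz $\psi(y)=y\,w(y)$, the singular factor $y$ cancels and the equation collapses to the regular ODE
$$\lambda\,w'(y)=w(y)^2\,r(y\,w(y)),$$
to which Cauchy's theorem provides a unique smooth solution with $w(0)=1$, giving a smooth $\psi$ with $\psi'(0)=1$. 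Composing the resulting linearizations for any $\xX\in\fF_\lambda$ and any $\yY\in\fF_\lambda$ yields the ``if'' direction of (1).

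Finally, to prove (2), I would transport everything through the conjugation from part (1) to the linear model $\xX_\lambda$. An automorphism $\phi$ of $\xX_\lambda$ satisfies $\lambda y\,\phi'(y)=\lambda\,\phi(y)$, whose only germ solutions at $0$ are the homotheties $\phi(y)=cy$ with $c\in\RR^*$; thus $D:\Aut(\xX_\lambda)\to\RR^*$, $\phi\mapsto\phi'(0)=c$, is manifestly a group isomorphism. Since $\Phi_t^{\xX_\lambda}(y)=e^{\lambda t}y$, the identity $D\Phi_t^{\xX_\lambda}=e^{\lambda t}$ is immediate, and both assertions transport to an arbitrary $\xX\in\fF_\lambda$ via the conjugation of part (1). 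The only genuine technical point is verifying that the substitution $\psi(y)=y\,w(y)$ really produces a smooth, non-singular ODE in $w$; this is a short direct computation using the factorization of $h$, and beyond it everything is formal.
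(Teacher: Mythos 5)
Your proof is correct and is essentially the argument the paper has in mind: the paper gives no details, saying only that the lemma ``follows from a straightforward computation and Cauchy's theorem on ODE's,'' and your write-up supplies exactly that — the chain-rule computation for the ``only if'' direction and for the linear model, and Cauchy's theorem applied to the desingularized equation $\lambda w'=w^2r(yw)$ for the linearization.
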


We mention in passing that to prove the case $\dim M>1$ of Theorem \ref{thm:main} we will need to extend the previous lemma to higher dimensions, in a way equivariant with respect to finite group actions. This extension will be based on non-equivariant higher dimensional analogues of statements (1) and (2), which are respectively a theorem of
Sternberg (see \cite{St} and Theorem \ref{thm:Sternberg} below) and a theorem of Kopell (see \cite{K} and Subsection \ref{ss:mM-0} below). Both results are substantially deeper than Lemma \ref{lemma:local-models}, and
in particular they require a condition of non-resonance which is trivial in the one dimensional case.

We next explain the classification of nondegenerate vector fields on the
circle. We identify $S^1$ with $\RR/2\pi\ZZ$, so vector fields on $S^1$
can be written as
$$\xX=h\,\partial_x$$
where $h$ is a $2\pi$-periodic smooth function.
We say that $\xX$ is nondegenerate if $h(y)=0$ implies $h'(y)\neq 0$ ($h'(y)$ can be identified
with the derivative of $\xX$ at $y\in h^{-1}(0)$). An immediate consequence
is that $h$ contains finitely many zeroes in $[0,2\pi)$. Another consequence is that $h$
changes sign when crossing any zero of $h$, and this implies that $h^{-1}(0)$ contains
an even number of elements in $[0,2\pi)$.
To classify nondegenerate vector fields on the circle we will associate to them the number of
zeroes, their derivatives at the zeroes (up to cyclic order), and a global invariant
denoted by $\chi$.

To define $\chi$ suppose first of all that $h$ has no zeroes. Denoting by $\Phi_t^{\xX}$
the flow of $\xX$ seen as a vector field on $\RR$, there is a unique real number $t$ such
that $\Phi_t^{\xX}(y)=y+2\pi$ for every $y\in\RR$. Then we set
$$\chi(\xX):=t.$$

Now suppose that $h$ vanishes somewhere, and write its zeroes contained in $[0,2\pi)$ as
$$0\leq z_1<z_2<\dots<z_{2r}<2\pi.$$
We extend this finite list to an infinite sequence by setting $z_{i+2r}=z_i$ for every integer
$i$. Below, we implicitly consider similar periodic extensions for all objects that we are
going to associate to the zeroes $z_i$.
By (2) in Lemma \ref{lemma:local-models}, for every $i$ there exists a connected neighborhood
$U_i$ of $z_i$, disjoint from $z_{i-1}$ and $z_{i+1}$, and a unique smooth involution $\sigma_i:U_i\to U_i$ such that
\begin{equation}
\label{eq:prop-sigma}
\sigma_i(z_i)=z_i,\qquad \sigma_i'(z_i)=-1,\qquad
(\sigma_i)_*\xX=\xX.
\end{equation}
Choose for every $i$ some $t_i^+>z_i$
contained in $U_i$ and define $t_i^-=\sigma_i(t_i^+)$. Then we
have $t_i^+,t_{i+1}^-\in (z_i,z_{i+1})$, so there is a unique
real number $\rho_i$ such that
$$t_{i+1}^-=\Phi_{\rho_i}^{\xX}(t_i^+).$$
Note that $\rho_i$ has the same sign as $h'(z_i)$. Now we
define
\begin{equation}
\label{eq:chi-xX}
\chi(\xX):=\sum_{i=1}^{2r} \rho_i.
\end{equation}

\begin{lemma}
\label{lemma:chi} The number $\chi(\xX)$ only depends on $\xX$,
and not on the choices of $t_i^{\pm}$. Furthermore, endowing
the set of generic vector fields with the $\cC^{\infty}$
topology the map $\xX\mapsto \chi(\xX)$ is continuous.
\end{lemma}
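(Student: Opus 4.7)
I would prove both parts by exploiting the characterisation of $\sigma_i$ as the unique non-trivial involution in $\Aut(\xX)$ near $z_i$, together with the fact that $\sigma_i$ commutes with the flow $\Phi_t^{\xX}$ on $U_i$, which is immediate from $(\sigma_i)_*\xX=\xX$. Independence of $\chi(\xX)$ from the choices of $t_i^+$ then reduces to a telescoping calculation. If $s_i^+\in U_i$ is another admissible point in the same connected component of $U_i\setminus\{z_i\}$ as $t_i^+$, there is a unique $\alpha_i\in\RR$ with $s_i^+=\Phi_{\alpha_i}^{\xX}(t_i^+)$, and the commutation with $\Phi_t^{\xX}$ forces $s_i^-=\sigma_i(s_i^+)=\Phi_{\alpha_i}^{\xX}(t_i^-)$. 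The corresponding new travel time $\rho_i'$, defined by $s_{i+1}^-=\Phi_{\rho_i'}^{\xX}(s_i^+)$, therefore satisfies $\rho_i'=\rho_i+\alpha_{i+1}-\alpha_i$; summing over $i$ with the cyclic convention $\alpha_{2r+1}=\alpha_1$ makes the differences telescope to zero, so $\sum\rho_i'=\sum\rho_i$. The zero-free case is even simpler, since $\chi(\xX)$ is defined there without any auxiliary choice.

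For continuity I would treat the two cases separately. When $h$ has no zeros, every $\cC^\infty$-close nondegenerate $\yY$ is also zero-free, and continuous dependence of the flow on the vector field immediately gives continuity of $\chi$. When $h$ has zeros $z_1<\dots<z_{2r}$, nondegeneracy $h'(z_i)\neq 0$ and the implicit function theorem guarantee that every $\cC^\infty$-close nondegenerate $\yY=\tilde h\,\partial_x$ also has exactly $2r$ zeros $z_i(\yY)$ depending smoothly on $\yY$. Having established well-definedness, I am free to choose the auxiliary points $t_i^+$ for $\yY$ however is convenient, and the cleanest choice is to fix them independent of $\yY$, each slightly to the right of $z_i$ inside a small neighborhood $U_i$ on which $\sigma_i^{\yY}$ is well-defined for all $\yY$ sufficiently close to $\xX$. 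Then the reflected points $t_i^-(\yY):=\sigma_i^{\yY}(t_i^+)$ depend continuously on $\yY$ (see below), and continuous dependence of the flow yields continuity of each $\rho_i(\yY)$ determined by $\Phi_{\rho_i(\yY)}^{\yY}(t_i^+)=t_{i+1}^-(\yY)$, hence of $\chi(\yY)=\sum_i\rho_i(\yY)$.

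The main remaining issue is the continuity of $\sigma_i^{\yY}$ in $\yY$. In dimension one this reduces to an elementary smooth linearisation, because the single eigenvalue $h'(z_i)$ automatically rules out resonances. Writing $h(x)=(x-z_i)g(x)$ with $g(z_i)=h'(z_i)\neq 0$, the linearisation ODE $\ell'(x)h(x)=h'(z_i)\,\ell(x)$ with normalisation $\ell(z_i)=0$, $\ell'(z_i)=1$ admits the closed-form solution $\ell(x)=(x-z_i)\exp\bigl(\int_{z_i}^{x}\bigl(\tfrac{h'(z_i)}{g(u)}-1\bigr)\tfrac{du}{u-z_i}\bigr)$, whose integrand extends smoothly across $u=z_i$ because $g(z_i)=h'(z_i)$. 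This yields a smooth local diffeomorphism $\ell$ conjugating $\xX$ to $h'(z_i)\,y\,\partial_y$ in the coordinate $y=\ell(x)$, under which the involution becomes $y\mapsto -y$; equivalently, $\sigma_i^{\yY}(x)=(\ell^{\yY})^{-1}(-\ell^{\yY}(x))$. Inspection of the explicit formula shows that $\ell^{\yY}$ and $(\ell^{\yY})^{-1}$ depend continuously on $\yY$ in the $\cC^\infty$ topology, and therefore so do the involutions $\sigma_i^{\yY}$, which is what was needed.
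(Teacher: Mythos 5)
Your proposal is correct and follows essentially the same route as the paper: well-definedness via the telescoping identity $\rho_i'=\rho_i+\alpha_{i+1}-\alpha_i$ coming from the fact that $\sigma_i$ commutes with the flow, and continuity via continuous dependence of the local involutions $\sigma_i^{\yY}=(\ell^{\yY})^{-1}\circ(-\ell^{\yY})$ on the vector field through the linearising coordinate. The only difference is that you make the continuity of the linearisation explicit with a closed-form integral formula, where the paper simply invokes Lemma \ref{lemma:local-models} and asserts that $\phi$ and $\lambda$ depend continuously on $\xX$.
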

\begin{proof}
We first prove that $\chi(\xX)$ does not depend on the choices
of $t_i^{\pm}$. If for any $i$ we replace $t_i^\pm$ by
$(t_i')^\pm$, then the requirement that
$(t_i')^-=\sigma_i((t_i')^+)$ implies that
$(t_i')^\pm=\Phi_{\pm\delta}^{\xX}(t_i^\pm)$ for some $\delta$,
so $\rho_i$ gets replaced by $\rho_i-\delta$ and $\rho_{i-1}$
gets replaced by $\rho_{i-1}+\delta$, and hence
(\ref{eq:chi-xX}) remains unchanged.

To prove that $\chi(\xX)$ depends continuously on $\xX$ we
first observe that any other vector field sufficiently close to
$\xX$ is also generic and has vanishing locus close to that of
$\xX$. Hence, once we have fixed the intervals $U_i$ and points
$t_i^+$ above, there is a neighborhood $\vV$ of $\xX$ in the
space of all vector fields on the circle such that if
$\yY\in\vV$ and we write $\yY=k\,\partial_x$ then
$k^{-1}(0)\subset\bigcup_i U_i$, each $U_i$ contains a unique
zero $w_i$ of $k$, and $w_i<t_i^+$. So it suffices to prove
that given $\delta>0$, choosing $\vV$ small enough, the
involution $\sigma_i^{\yY}$ satisfying (\ref{eq:prop-sigma})
with $\xX$ resp. $z_i$ replaced by $\yY$ resp. $w_i$ has the
property that $\sigma_i^{\yY}(t_i^+)$ is well defined and at
distance $<\delta$ from $\sigma_i(t_i^+)$.

The previous property will follow if we prove that $\sigma_i$
depends continuously on $\xX$. This is a local question, so let
us assume that $\xX$ is a vector field defined on an open
interval $0\in I\subset\RR$ with $\xX=g\,\partial_x$ and
satisfying $g^{-1}(0)=\{0\}$ and $g'(0)\neq 0$; by Lemma
\ref{lemma:local-models} there is an open interval $0\in
J\subset\RR$ and a smooth embedding $\phi:J\to I$ such that
$\phi_*(\lambda x\,\partial_x)=\xX$ for some nonzero real
$\lambda$.
It is easy to check that both $\phi$ and $\lambda$
depend continuously on
$\xX$.
Take $U=\phi(J\cap -J)$. The map
$\sigma:U\to U$ defined as $\sigma(x)=\phi(-\phi^{-1}(x))$
is a smooth involution of $U$ and it satisfies
$\sigma_*\xX=\xX$. By the previous observations it is clear
that $\sigma$ depends continuously on $\xX$.
\end{proof}

This is the classification theorem of nondegenerate vector fields on $S^1$:

\begin{theorem}
\label{thm:vector-fields-circle}
Given two vector fields $\xX$ and $\yY$ on the circle, there exists an orientation preserving
diffeomorphism $\phi\in\Diff^+(S^1)$ satisfying $\phi_*\xX=\yY$ if and only if
$\xX$ and $\yY$ have the same number of zeroes, the collection of derivatives at the zeroes
of $\xX$ and $\yY$, travelling along $S^1$ counterclockwise, coincide up to a cyclic permutation,
and $\chi(\xX)=\chi(\yY)$.
\end{theorem}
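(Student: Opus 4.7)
\textbf{Necessity} is straightforward: any orientation-preserving $\phi$ with $\phi_*\xX=\yY$ bijects the zero sets preserving cyclic order, matches derivatives at corresponding zeroes by Lemma \ref{lemma:local-models}(1), and (by uniqueness in Lemma \ref{lemma:local-models}(2)) carries each involution $\sigma_i$ to $\sigma_i^\yY$ and each flow time $\rho_i$ to the corresponding one for $\yY$; the zero-free case follows similarly from $\phi\circ\Phi_t^\xX=\Phi_t^\yY\circ\phi$ together with the fact that a lift $\RR\to\RR$ of $\phi$ commutes with translation by $2\pi$. Hence $\chi(\xX)=\chi(\yY)$ in both cases.

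For \textbf{sufficiency in the zero-free case}, assume $h,k>0$ (the sign must agree, since $\chi$ does). Lift $\xX,\yY$ to vector fields on $\RR$; then $T:=\chi(\xX)=\chi(\yY)$ is the common return time: $\Phi_T^\xX(y)=y+2\pi=\Phi_T^\yY(y)$ for every $y\in\RR$. For any $y_0,y_0'\in\RR$, declare $\tilde\phi(\Phi_t^\xX(y_0)):=\Phi_t^\yY(y_0')$. Each orbit covers all of $\RR$, so $\tilde\phi$ is a smooth increasing diffeomorphism intertwining the flows, and the return-time identity forces $\tilde\phi(y+2\pi)=\tilde\phi(y)+2\pi$, yielding the desired $\phi\in\Diff^+(S^1)$ on the quotient.

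For the \textbf{case with zeroes}, order them as $z_1<\cdots<z_{2r}$ and $w_1<\cdots<w_{2r}$; after a rotation we may assume $h'(z_i)=k'(w_i)$ for every $i$. By Lemma \ref{lemma:local-models}(1), choose germs $\phi_i$ near $z_i$ with $\phi_i(z_i)=w_i$ and $(\phi_i)_*\xX=\yY$, and fix base points $t_i^{\pm}\in U_i$ as in the definition of $\chi$. Starting from an arbitrary $\phi_1$, propagate around $S^1$ arc-by-arc: extend across $I_1=(z_1,z_2)$ by $\phi(\Phi_s^\xX(t_1^+)):=\Phi_s^\yY(\phi_1(t_1^+))$, then use the one-parameter freedom in Lemma \ref{lemma:local-models}(2) to replace $\phi_2$ by the unique admissible germ matching this extension near $z_2$. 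Iterate to determine $\phi_3,\ldots,\phi_{2r}$ and the flow extensions on $I_1,\ldots,I_{2r-1}$; each gluing is smooth by construction of the local models.

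The main obstacle is the \textbf{wrap-around} condition at $z_1$: the flow extension of $\phi_{2r}$ across $I_{2r}=(z_{2r},z_1+2\pi)$ must agree with $\phi_1$ near $z_1$. Chasing the chain of identities $\phi(t_{i+1}^-)=\Phi_{\rho_i^\xX}^\yY(\phi(t_i^+))$ and $\phi(t_i^+)=\sigma_i^\yY(\phi(t_i^-))$ around the circle, and using the commutation $\sigma_i^\yY\circ\Phi_t^\yY=\Phi_t^\yY\circ\sigma_i^\yY$ (which follows from $(\sigma_i^\yY)_*\yY=\yY$), the wrap-around equation reduces to the scalar identity $\sum_{i=1}^{2r}\rho_i^\xX=\sum_{i=1}^{2r}\rho_i^\yY$, that is, $\chi(\xX)=\chi(\yY)$, which holds by hypothesis. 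The assembled $\phi$ is then a global orientation-preserving diffeomorphism of $S^1$ with $\phi_*\xX=\yY$, completing the proof.
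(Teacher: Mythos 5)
Your proof is correct; note that the paper itself deliberately omits a proof of Theorem \ref{thm:vector-fields-circle}, stating only that the ``if'' direction is ``an easy exercise using Lemma \ref{lemma:local-models}'' and referring to \cite{By,Hi}, so what you have done is carry out exactly the exercise the author has in mind: local conjugating germs at the zeroes, flow-propagation along the complementary arcs, and a single wrap-around obstruction that is killed by $\chi(\xX)=\chi(\yY)$. Two small points worth making explicit. First, the theorem implicitly concerns \emph{nondegenerate} vector fields (otherwise $\chi$ is undefined), so you should say so. Second, Lemma \ref{lemma:local-models}(1) only supplies \emph{some} conjugating germ $\phi_i$ at $z_i$, which may reverse orientation; you need to normalize each $\phi_i$ to be orientation-preserving (e.g.\ by post-composing with $\sigma_i^{\yY}$), since otherwise $\phi_i(t_i^+)$ lands on the wrong side of $w_i$ and the identity $\phi_i(t_i^-)=\sigma_i^{\yY}(\phi_i(t_i^+))$, on which your bookkeeping of the $\rho_i$'s rests, fails. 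With that normalization the chase does reduce the wrap-around condition to $\sum\rho_i^{\xX}=\sum\rho_i^{\yY}$ as you claim, because each intermediate gluing forces the $\yY$-side transit time over $I_i$ to equal $\rho_i^{\xX}$ for $i<2r$, leaving only the last one to be checked against the hypothesis.
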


We are not going to prove the previous theorem. In fact we will only use
the "only if" part of it, which is rather obvious from the definitions;
the proof of the "if" part is an easy exercise using
Lemma \ref{lemma:local-models}. See \cite{By,Hi}
for detailed proofs of a more general result.

We close this subsection with another result that will be used in the proof of
Theorem \ref{thm:circle}.
Suppose that $h:\RR\to\RR$ is a smooth function such that $h(0)=h(1)=0$, and that
$h$ does not vanish on the open interval $(0,1)$. Let $\xX=h\,\partial_x$. The next
lemma follows easily from Cauchy's theorem on ODE's.

\begin{lemma}
\label{lemma:aut-interval} Any diffeomorphism $\phi:(0,1)\to
(0,1)$ satisfying $\phi_*\xX=\xX$ is equal to $\Phi_t^{\xX}$
for some $t\in\RR$. In particular if a diffeomorphism
$\phi:(0,1)\to (0,1)$ satisfying $\phi_*\xX=\xX$ is the
identity on an open subset of $(0,1)$ then $\phi$ is the
identity on the entire $(0,1)$.
\end{lemma}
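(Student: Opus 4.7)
The plan is to show that the flow of $\xX$ acts transitively on $(0,1)$ and then exploit the fact that any $\phi$ preserving $\xX$ commutes with the flow.

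First I would fix a basepoint $x_0\in(0,1)$ and consider the map $\psi:\RR\to (0,1)$, $\psi(t)=\Phi_t^\xX(x_0)$. Since $h$ has no zeros on $(0,1)$, say $h>0$ there (the other sign is symmetric), the map $\psi$ is a strictly increasing local diffeomorphism wherever defined. Its maximal interval of definition is all of $\RR$ because, $h$ being smooth and vanishing at $0$ and $1$, the improper integrals $\int_{x_0}^{1}dx/h(x)$ and $\int_0^{x_0} dx/h(x)$ both diverge, so the trajectory needs infinite time to approach either endpoint and cannot leave $(0,1)$. Hence $\psi$ is a diffeomorphism $\RR\to(0,1)$; in particular the orbit of $x_0$ under the flow is all of $(0,1)$.

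Next, given $\phi:(0,1)\to (0,1)$ with $\phi_*\xX=\xX$, I would use the standard fact that a diffeomorphism pushing $\xX$ forward to itself commutes with its flow, i.e.\ $\phi\circ\Phi_t^\xX=\Phi_t^\xX\circ\phi$ for every $t\in\RR$. Pick $t_0\in\RR$ such that $\Phi_{t_0}^\xX(x_0)=\phi(x_0)$, which is possible by the transitivity established above. Then for every $t\in\RR$,
\[
\phi(\Phi_t^\xX(x_0))=\Phi_t^\xX(\phi(x_0))=\Phi_t^\xX(\Phi_{t_0}^\xX(x_0))=\Phi_{t_0}^\xX(\Phi_t^\xX(x_0)),
\]
so $\phi$ and $\Phi_{t_0}^\xX$ agree on the orbit $\{\Phi_t^\xX(x_0)\mid t\in\RR\}=(0,1)$. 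This proves the first assertion.

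For the second assertion, suppose $\phi_*\xX=\xX$ and $\phi|_U=\mathrm{id}_U$ for some nonempty open $U\subset(0,1)$. By the first part, $\phi=\Phi_{t_0}^\xX$ for some $t_0\in\RR$. Picking any $x\in U$ gives $\Phi_{t_0}^\xX(x)=x$; since the orbit map $t\mapsto \Phi_t^\xX(x)$ is injective (as $h(x)\ne 0$ implies $\psi$ is strictly monotonic), we conclude $t_0=0$ and hence $\phi=\mathrm{Id}$. I do not expect any serious obstacle here; the only point requiring a line of justification is the completeness of the flow on $(0,1)$, which follows from the divergence of $\int dx/h(x)$ at the simple (or higher order) zeros of $h$ at the endpoints.
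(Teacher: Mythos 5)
Your proof is correct and is essentially the fleshed-out version of what the paper intends, since the paper only remarks that the lemma "follows easily from Cauchy's theorem on ODE's": your completeness-of-the-flow argument (via divergence of $\int dx/h$ at the endpoints, which holds because $h$ is $\cC^1$ and vanishes there) together with the commutation $\phi\circ\Phi_t^{\xX}=\Phi_t^{\xX}\circ\phi$ is exactly the standard route, amounting to straightening $\xX$ to $\partial_s$ on $\RR$ by the time coordinate and observing that the only automorphisms are translations.
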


\subsection{Proof of Theorem \ref{thm:circle}}
\label{ss:proof-thm:circle}

Let $\Gamma$ be a finite group acting smoothly and effectively
on $S^1$, and let $f:S^1\to\RR$ be a $\Gamma$-invariant Morse
function. Let $\Crit(f)$ be the set of critical points of $f$.
Any $\Gamma$-invariant Riemannian metric in $\mM^{\Gamma}$ is
isometric to the round circle $$\{x^2+y^2=r^2\}$$ for some
$r>0$, and this allows to identify the action of $\Gamma$ on
$S^1$ with the action of a cyclic or a dihedral group. We treat
separately the two possibilities.

\subsubsection{Dihedral groups} Suppose first that $\Gamma$ is
dihedral. Then $\Gamma$ contains elements that reverse the
orientation. Let $p\in S^1$ be a fixed point on an orientation
reversing element of $\Gamma$, and let $\Gamma_0\subset\Gamma$
be the subgroup of the elements which act preserving the
orientation. Since $[\Gamma:\Gamma_0]=2$,
we have
$\Gamma\,p=\Gamma_0\,p$. On the other hand, $p$ is
necessarily a critical point of $f$, because $f$ is
$\Gamma$-invariant.

Let $\mM_f$ be the set of metrics
$g\in\mM^{\Gamma}$ satisfying:
\begin{equation}
\label{eq:prop-mM-f-circle}
\text{if $x,y\in\Crit(f)$ and $D\nabla^gf(x)=D\nabla^gf(y)$ then
$\Gamma\,x=\Gamma\,y$.}
\end{equation}
It is clear that $\mM_f$ is open and dense in $\mM^{\Gamma}$.
Now suppose that $g\in\mM_f$ and let $\xX=\nabla^gf$. To prove
that $\Aut(\xX)/\RR\simeq\Gamma$ we consider an arbitrary
$\phi\in\Aut(\xX)$ and show that composing $\phi$ with the
action of suitably chosen elements of $\Gamma$ and with the
flow $\Phi_t^{\xX}$ for some $t$ we obtain the
identity.

Let $\phi\in\Aut(\nabla^gf)$. Composing $\phi$ with the action of some $\gamma\in\Gamma$ we
may assume that $\phi$ is orientation preserving. By
(\ref{eq:prop-mM-f-circle}) we have $\phi(p)\in\Gamma\,p$.
Since $\Gamma\,p=\Gamma_0\,p$, up to composing
$\phi$ with the action of some element of $\Gamma_0$ we can
assume that $\phi$ preserves the orientation and fixes $p$.
This implies that $\phi$ fixes all critical points of
$f$.

Let us label counterclockwise the critical points of $f$
as $p_1,p_2,\dots,p_{2r}$. By Lemma \ref{lemma:local-models},
up to composing $\phi$ with $\Phi_t^{\xX}$ for some choice of
$t$ we may assume that $\phi$ is the identity on a neighborhood
of $p_1$. This implies that $\phi$ is the identity on the
entire circle. Indeed, by Lemma \ref{lemma:aut-interval},
$\phi$ is the identity on the arc from $p_1$ to $p_2$, so by
Lemma \ref{lemma:local-models} $\phi$ is
the identity on a neighborhood of $p_2$. We next apply Lemma
\ref{lemma:aut-interval} to the arc from $p_2$ to $p_3$ and
conclude that the restriction of $\phi$ to this arc is equal to
the identity. An so on, until we have traveled around the
entire circle.

\subsubsection{Cyclic groups}
Suppose that $\Gamma$ is a cyclic group. The only case in which
$\Gamma$ can contain orientation reversing elements is that in
which $\Gamma$ consists of two elements, the nontrivial one
being an orientation reversing involution of $S^1$. This
situation can be addressed with the arguments of the previous
case, so let us assume here that all elements of $\Gamma$
preserve the orientation.
Then we define $\mM_f$ to be the set of metrics
$g\in\mM^{\Gamma}$ satisfying property (\ref{eq:prop-mM-f-circle})
above and $\chi(\nabla^gf)\neq 0$.

We claim that $\mM_f$ is open and dense in $\mM^{\Gamma}$.
Since the set of metrics $g\in\mM^{\Gamma}$ satisfying property (\ref{eq:prop-mM-f-circle})
is open and dense, to see that $\mM_f$ is dense it suffices to observe that
if for some choice of $g$ we have $\chi(\nabla^gf)=0$ then
slightly modifying $g$ away from the critical points we may
force $\chi$ to take a nonzero value; furthermore, the
modification of $g$ can be made $\Gamma$-invariant because
$\Gamma$ is generated by a rotation (note that, in contrast, if
$\Gamma$ is a dihedral group then for any $\Gamma$-invariant
metric $g$ we have $\chi(\nabla^gf)=0$).
Openness of $\mM_f$ follows from the second statement in Lemma
\ref{lemma:chi}.

Let $g\in\mM_f$, let $\xX=\nabla^gf$, and let
$\phi\in\Aut(\xX)$.
We claim that $\phi$ is orientation preserving. Indeed, for  any
orientation reversing diffeomorphism $\psi$ of $S^1$ we have
$\chi(\psi_*\xX)=-\chi(\xX)$ and since $\chi(\xX)\neq 0$, we can not possibly
have $\psi_*\xX=\xX$. Let $p$ be any critical
point of $f$. By (\ref{eq:prop-mM-f-circle})
we have $\phi(p)\in\Gamma\,p$, so up to composing $\phi$ with
the action of some element of $\Gamma$ we can assume that $\phi(p)=p$.
Then, since $\phi$ preserves the orientation, it fixes all the other
critical points, and the argument is concluded as in the case
of dihedral groups. Hence the proof of Theorem \ref{thm:circle}
is now complete.

In the remainder of the paper we are going to assume that $\dim M>1$.

\section{Equivariant Sternberg's linearisation theorem for families}
\label{s:sternberg} The following is Sternberg's
linearisation theorem \cite[Theorem 4]{St}, which extends to
the smooth setting an analytic result proved by Poincar\'e in
his thesis:

\begin{theorem}[Sternberg]
\label{thm:Sternberg} Let $0\in U\subset\RR^n$ be an open set
and let $\xX:U\to\RR^n$ be a smooth vector field satisfying
$\xX(0)=0$. Suppose that the derivative $D\xX(0)$ diagonalises
and has (possibly complex) eigenvalues
$\lambda_1,\dots,\lambda_n$, repeated with multiplicity.
Suppose that each $\lambda_i$ has negative real part, and that
\begin{equation}
\label{eq:no-resonances}
\lambda_i\neq \sum_{j=1}^n\alpha_j\lambda_j,
\qquad \text{for any }i\text{, and any }
\alpha_1,\dots,\alpha_n\in\ZZ_{\geq 0}\text{ satisfying }\sum\alpha_j\geq 2.
\end{equation}
Then there exists open sets $0\in U'\subset\RR^n$ and $0\in U''\subset U$, and a
diffeomorphism $\phi:U''\to U'$, such that $D\phi(0)=\Id$ and
$\phi\circ\xX\circ\phi^{-1}=D\xX(0)$.
\end{theorem}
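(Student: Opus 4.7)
The plan is to prove Sternberg's theorem in two stages: first produce a \emph{formal} linearisation by solving a sequence of cohomological equations, then upgrade it to a smooth one by combining Borel's theorem with a homotopy argument tailored to the contracting nature of $A:=D\xX(0)$.

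\textbf{Formal linearisation.} Write $\xX=A+R$ where $R$ vanishes to second order at $0$. I look inductively for polynomial changes of coordinates $\phi_k=\Id+P_k$, with $P_k$ a vector-valued homogeneous polynomial of degree $k\geq 2$, chosen so that after conjugation the new vector field agrees with $A$ modulo terms of order $k+1$. At each step this reduces to a cohomological equation
$$[A,P_k]=S_k,$$
where $S_k$ is the degree-$k$ part of the current remainder. The operator $\mathrm{ad}_A$ preserves the finite-dimensional space of degree-$k$ polynomial vector fields and, in an eigenbasis of $A$, acts diagonally on the monomial basis $\{x^\alpha\partial_i\}$ with eigenvalues $\lambda_i-\sum_j\alpha_j\lambda_j$ for $|\alpha|=k\geq 2$. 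Hypothesis \eqref{eq:no-resonances} says exactly that these eigenvalues are nonzero, so $\mathrm{ad}_A$ is invertible and $P_k$ exists and is unique. Composing the $\phi_k$ yields a formal diffeomorphism $\phi_{\text{for}}$, tangent to the identity at $0$, conjugating $\xX$ to $A$ at the level of formal power series.

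\textbf{Reduction to a flat perturbation.} By Borel's theorem there is a smooth diffeomorphism $\phi_0$ defined near $0$, with $D\phi_0(0)=\Id$, whose Taylor series at $0$ equals $\phi_{\text{for}}$. After replacing $\xX$ by $(\phi_0)_*\xX$ I may therefore assume that $\xX-A$ is flat at $0$ (i.e.\ vanishes there to infinite order).

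\textbf{Flat case via homotopy.} Set $\xX_s:=A+s(\xX-A)$ for $s\in[0,1]$ and seek a family of local diffeomorphisms $\psi_s$ fixing $0$, with $\psi_0=\Id$, satisfying $(\psi_s)_*A=\xX_s$. Differentiating in $s$ and writing $Y_s:=(\partial_s\psi_s)\circ\psi_s^{-1}$ yields the cohomological equation $[\xX_s,Y_s]=\xX-A$. Since every eigenvalue of $A$ has negative real part, $\xX_s$ has a uniform sink at $0$, and on a sufficiently small neighbourhood $U''$ the flow $\Phi_t^{\xX_s}$ contracts exponentially for $t\geq 0$, uniformly in $s$. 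One then sets
$$Y_s(x):=-\int_0^{\infty}\bigl(\Phi_t^{\xX_s}\bigr)^{\!*}(\xX-A)(x)\,dt;$$
flatness of $\xX-A$ at $0$, combined with Gronwall-type estimates on $\Phi_t^{\xX_s}$ and its derivatives, makes the integrand and all its spatial derivatives decay exponentially in $t$, so the integral converges in $\cC^{\infty}(U'')$ and $Y_s$ is itself flat at $0$. Integrating the time-dependent field $Y_s$ from $s=0$ to $s=1$ produces $\psi_1$, and then $\phi:=\psi_1^{-1}\circ\phi_0$ is the required smooth diffeomorphism with $D\phi(0)=\Id$ and $\phi_*\xX=A$ on a neighbourhood of $0$.

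\textbf{Main obstacle.} The algebraic step using non-resonance is routine and Borel's theorem handles the passage from formal to smooth on the jet level. The heart of the argument is the third step: one must control how flatness of $\xX-A$ at $0$ is preserved, and effectively amplified, under pullback by the contracting flow, uniformly in $s$ and in every order of differentiation. The required bounds follow from standard Gronwall estimates together with the spectral gap provided by the negative real parts of the $\lambda_i$, but they are the technically delicate part of Sternberg's proof.
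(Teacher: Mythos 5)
The paper does not actually prove this statement: it quotes Sternberg's Theorem 4 for the $\cC^k$ case and invokes Kopell's Theorem 6 to upgrade the conjugacy to $\cC^{\infty}$, so there is no in-paper argument to match yours against. Your outline is the standard modern alternative to Sternberg's original scheme: formal linearisation via the non-resonance condition, Borel realisation, and then the deformation (Moser/path) method for the flat remainder, with $Y_s=-\int_0^{\infty}(\Phi_t^{\xX_s})^*(\xX-A)\,dt$ solving $[\xX_s,Y_s]=\xX-A$ because the boundary term at $t=\infty$ vanishes. This route has the advantage of producing a $\cC^{\infty}$ conjugacy directly, whereas Sternberg's iterative construction only gives finite smoothness and needs Kopell's separate argument; the signs and the cohomological equations in your sketch check out, and $D\phi(0)=\Id$ follows since $\phi_0$ is tangent to the identity and $Y_s$ is flat at $0$. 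Two points deserve to be made explicit rather than asserted: (i) the convergence of the integral in $\cC^k$ for every $k$ requires comparing the at-most-exponential growth of $(D\Phi_t^{\xX_s})^{-1}$ and of the higher derivatives of the flow (uniformly in $s$, which uses that $\xX-A$ is flat so that all $\xX_s$ are uniformly contracting on a fixed small ball) against the faster-than-any-exponential decay of $D^j(\xX-A)$ along orbits; and (ii) in the formal step the eigenbasis of $A$ may be complex, so one should note that $\mathrm{ad}_A$ is a real operator whose complexification is invertible, hence the real homological equation is solvable. Neither point is a gap in the strategy, only in the level of detail, and you correctly identify (i) as the technical heart of the theorem.
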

Actually \cite[Theorem 4]{St} states that $\phi$ can be chosen to be
$\cC^k$ for every finite and big enough $k$. The fact
that $\phi$ can be assumed to be $\cC^{\infty}$ follows from
\cite[Theorem 6]{K}.

Sternberg proved in \cite{St} an analogous theorem for local
diffeomorphisms of $\RR^n$. Later, Anderson proved \cite[\S2,
Lemma]{A1} a parametric version of Sternberg's theorem for
diffeomorphisms, which can be translated, using the arguments
in \cite[\S 6]{St}, into a theorem on vector fields. Before
stating it, we introduce some notation. Let $D\subset\RR^n$ be
an open disk centered at $0$, and let $\Delta\subset D$ be a
smaller concentric disk. Let $r$ be a natural number. For any
smooth map $\xX:D\to\RR^n$ define
$\|\xX\|_{\Delta,r}=\sup_{x\in \Delta}\|D^r\xX(x)\|$, where
$\|D^r\xX(x)\|$ denotes the sum of the norms of all partial
derivatives of $\xX$ at $x$ of degree $\leq r$. This defines a
(non separated!) topology on $\Map_0(D,\RR^n)$, the set of all
smooth maps $D\to\RR^n$ fixing $0$, and we denote by
$\Map_0(D,\RR^n)_{\Delta,r}$ the resulting topological space.
This is the analogue of Anderson's theorem for vector fields:

\begin{theorem}
\label{thm:Anderson} Let $L:\RR^n\to\RR^n$ be a linear map
which diagonalises with eigenvalues $\lambda_1,\dots,\lambda_n$
satisfying (\ref{eq:no-resonances}). Assume that each
$\lambda_i$ has negative real part, and that
$\lambda_i\neq\lambda_j$ for $i\neq j$.
There exists a neighborhood $N$ of $L$ in
$\Map_0(D,\RR^n)_{\Delta,r+1}$ and a continuous map
$$\Phi:N\to\Map_0(D,\RR^n)_{\Delta,r}$$ such that:
\begin{enumerate}
\item for every $\xX\in N$, $D\Phi(\xX)(0)=\Id$, so
    $\Phi(\xX)$ gives a diffeomorphism $U_{\xX}\to
    U'_{\xX}$ between neighborhoods of $0$,
\item for every $\xX\in N$,
    $\Phi(\xX)\circ\xX\circ\Phi(\xX)^{-1}:U'_{\xX}\to
    \RR^n$ is equal to $D\xX(0)$.
\end{enumerate}
\end{theorem}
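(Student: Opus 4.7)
The plan is to deduce Theorem \ref{thm:Anderson} from Anderson's parametric linearisation result for diffeomorphisms \cite[\S2, Lemma]{A1} by passing through time-$1$ flow maps, following the translation between vector-field and diffeomorphism formulations of Sternberg's theorem sketched in \cite[\S6]{St}. The point is that the hypotheses on $L$ are precisely what is needed both to apply Anderson's theorem to $e^L$ and to recover the vector-field conjugation from the diffeomorphism conjugation.

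\textbf{Step 1: the time-$1$ flow as a continuous operation.} I would first shrink $D$ and fix a concentric disk containing $\Delta$ so that for every $\xX$ in a sufficiently small $\cC^{r+1}$ neighborhood of $L$ in $\Map_0(D,\RR^n)_{\Delta,r+1}$ the time-$1$ flow $\Phi_1^{\xX}$ is defined and is a diffeomorphism onto its image. Standard results on smooth dependence of ODE solutions on the vector field (losing one derivative) show that $\xX\mapsto \Phi_1^{\xX}$ is continuous from $\Map_0(D,\RR^n)_{\Delta,r+1}$ into the space of local diffeomorphisms fixing $0$ with the $\cC^r$ topology on $\Delta$. The flow of the linear vector field $L$ is $\Phi_1^L = e^L$, whose eigenvalues $\mu_j = e^{\lambda_j}$ are pairwise distinct with $|\mu_j|<1$; the non-resonance (\ref{eq:no-resonances}) on the $\lambda_j$ transfers to the multiplicative non-resonance $\mu_i\neq\prod_j\mu_j^{\alpha_j}$ needed by Anderson.

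\textbf{Step 2: apply the diffeomorphism theorem and identify the conjugation.} Anderson's theorem applied to the family $\Phi_1^{\xX}$ produces, continuously in $\xX$, a local diffeomorphism $\Phi(\xX)$ fixing $0$ with $D\Phi(\xX)(0) = \Id$ and
\[
\Phi(\xX)\circ \Phi_1^{\xX}\circ \Phi(\xX)^{-1} = e^L.
\]
I then have to show that the same $\Phi(\xX)$ linearises the vector field, i.e., that $\yY := \Phi(\xX)_*\xX$ equals $L$. By construction $\yY(0)=0$, $D\yY(0)=L$, and the time-$1$ flow of $\yY$ is $e^L$. Since a vector field commutes with its own flow, $\yY$ is $e^L$-invariant, hence so is $\yY - L$. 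A formal Taylor series comparison shows, under (\ref{eq:no-resonances}), that an $e^L$-invariant germ with linear part $L$ can have no nonlinear homogeneous term, so $\yY-L$ is flat to infinite order at $0$; combined with the uniqueness of a smooth vector field with prescribed non-resonant linear part generating a given local flow (the content of the translation in \cite[\S6]{St}), this forces $\yY = L$. Continuity of $\Phi$ in the $\cC^r$ topology on $\Delta$ is then the composition of the continuity in Step~1 with Anderson's continuous dependence, accounting exactly for the loss of one derivative stated in the theorem.

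The main obstacle I foresee is Step~2: promoting the infinitesimal/formal identity $\yY = L$ to an honest one. The formal argument only shows that $\yY - L$ is flat at $0$, and the additional input required is that a smooth vector field with non-resonant linear contraction as linear part is rigid under invariance by its own time-$1$ flow. A related subtlety, which needs attention but should not cause serious difficulty, is that when the $\lambda_j$ are complex the multiplicative non-resonance $e^{\lambda_i}\neq e^{\sum\alpha_j\lambda_j}$ allows a priori a mismatch by $2\pi i\ZZ$ not excluded by (\ref{eq:no-resonances}); this is harmless here because any such extra resonance in $e^L$ would correspond to an $e^L$-invariant polynomial vector field, which the uniqueness argument already rules out once $L$ is identified as the unique logarithm in the relevant class.
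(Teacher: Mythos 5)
The paper offers no proof of Theorem \ref{thm:Anderson}: it is presented as the translation of Anderson's parametric lemma \cite[\S2, Lemma]{A1} to vector fields via the arguments of \cite[\S 6]{St}, which is exactly the route you follow, so your overall strategy is the intended one. But the two points you yourself flag as delicate in Step~2 are not actually closed by your remarks, and they are the whole content of the translation. First, the multiplicative non-resonance $e^{\lambda_i}\neq\prod_j e^{\alpha_j\lambda_j}$ needed to apply Anderson's lemma to the family $\Phi_1^{\xX}$ genuinely does not follow from (\ref{eq:no-resonances}) when the $\lambda_j$ are complex: e.g.\ $\lambda_{1,2}=-1\pm 2\pi\imag$, $\lambda_3=-1/2$ gives $e^{\lambda_1}=e^{2\lambda_3}$ while $\lambda_1-2\lambda_3=2\pi\imag\neq 0$. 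Your claim that this is ``harmless'' is circular: without multiplicative non-resonance you cannot invoke Anderson's lemma for $e^L$ in the first place, nor the Kopell-type statement that the centralizer of $e^L$ consists of linear maps, which is what your ``uniqueness argument'' secretly rests on. The standard repair (and what Sternberg actually does) is to first linearize the Taylor series of $\xX$ at $0$ using only the additive condition (\ref{eq:no-resonances}), reducing to the case $\xX=L+(\text{flat})$; the remaining smooth conjugation of a flat perturbation requires no resonance hypothesis at all. (In the paper's application the eigenvalues of $D\nabla^gf(p)$ are real, where additive and multiplicative non-resonance coincide, but the theorem as stated allows complex eigenvalues.)

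Second, your passage from ``$\Phi(\xX)$ conjugates the time-$1$ maps'' to ``$\Phi(\xX)$ conjugates the vector fields'' via flatness of $\yY-L$ plus a rigidity assertion is the weakest link: flatness alone does not give $\yY=L$, and the rigidity you appeal to is again the centralizer statement above. The clean way to finish is the averaging trick: replace the conjugacy $R$ produced by Anderson's lemma with
\[
\Phi(\xX)\;=\;\int_0^1 e^{-sL}\,R\circ\Phi_s^{\xX}\,ds,
\]
which satisfies $\Phi(\xX)\circ\Phi_t^{\xX}=e^{tL}\circ\Phi(\xX)$ for all $t$ (using $e^{-L}R\,\Phi_1^{\xX}=R$), hence intertwines the vector fields themselves; it still has derivative $\Id$ at $0$ and depends continuously on $\xX$ in the $\Delta$-seminorms because $R$ and the flow do. With the formal pre-linearization in place of the naive use of Anderson's lemma, and the averaging integral in place of your flatness argument, the proof is complete.
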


We will need an analogue of Theorem
\ref{thm:Anderson} in an equivariant setting. However, as was
mentioned in the introduction, the presence of symmetries
usually forces eigenvalues to have high multiplicity, and
consequently the hypothesis in Theorem \ref{thm:Anderson} will
most of the times not hold.

Now the (only) reason why Anderson assumes the eigenvalues
$\lambda_1,\dots,\lambda_n$ to be pairwise distinct is that he
needs to be able to diagonalize the linear maps close to $L$ in
a continuous way. To state this more precisely, let
$\GL^*(\RR,n)\subset\GL(n,\RR)$ denote the open and dense set
of linear automorphisms of $\RR^n$ all of whose eigenvalues are
distinct. Anderson uses the following elementary lemma.

\begin{lemma}
\label{lemma:diag}
Any $L\in\GL^*(n,\RR)$ admits a neighborhood $U\subset\GL^*(n,\RR)$
and smooth maps $f_1,\dots,f_n:U\to\CC^n$ so that for any
$L'\in U$ the vectors $f_1(L'),\dots,f_n(L')$ form a basis of
$\CC^n$ with respect to which $L'$ diagonalizes.
\end{lemma}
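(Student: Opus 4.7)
The plan is to construct the eigenvectors $f_i$ via holomorphic (in fact, smooth) spectral projectors obtained from a contour integral. Since $L\in\GL^*(n,\RR)$ has $n$ distinct eigenvalues $\mu_1,\dots,\mu_n\in\CC$ (possibly some complex), I can choose pairwise disjoint open disks $D_1,\dots,D_n\subset\CC$ with $\mu_i\in D_i$ and $\partial D_i\cap\sigma(L)=\emptyset$ for each $i$, where $\sigma(\cdot)$ denotes the spectrum. Because the spectrum of a matrix varies continuously with its entries, there is an open neighborhood $U$ of $L$ in $\GL(n,\RR)$ such that for every $L'\in U$, each $\partial D_i$ avoids $\sigma(L')$ and $D_i$ contains exactly one eigenvalue of $L'$. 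In particular $U\subset\GL^*(n,\RR)$.

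Next, on $U$ I would define the spectral projection
\[
P_i(L')=\frac{1}{2\pi\imag}\oint_{\partial D_i}(zI-L')^{-1}\,dz\in\End(\CC^n).
\]
Since the integrand depends smoothly (indeed holomorphically) on $L'$ as long as $\partial D_i$ avoids $\sigma(L')$, the map $L'\mapsto P_i(L')$ is smooth on $U$. By the standard properties of the Riesz functional calculus, $P_i(L')$ is the projection of $\CC^n$ onto the generalized eigenspace of $L'$ associated with the eigenvalues in $D_i$; for $L'\in U$ this is the one-dimensional eigenspace for the unique eigenvalue of $L'$ in $D_i$, and $P_i(L')$ commutes with $L'$.

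Now I would fix, once and for all, vectors $v_1,\dots,v_n\in\CC^n$ with $P_i(L)v_i\neq 0$ for every $i$; such $v_i$ exist because $P_i(L)$ is a nonzero projection. By continuity, shrinking $U$ if necessary, $P_i(L')v_i\neq 0$ for all $L'\in U$, so I can define
\[
f_i(L')=P_i(L')v_i.
\]
Each $f_i$ is smooth on $U$, and $f_i(L')$ is a nonzero eigenvector of $L'$ for the unique eigenvalue in $D_i$. Since the eigenvalues of $L'$ lie in the disjoint disks $D_1,\dots,D_n$, they are pairwise distinct, and so the corresponding eigenvectors $f_1(L'),\dots,f_n(L')$ are linearly independent, i.e.\ they form a basis of $\CC^n$ in which $L'$ diagonalizes.

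There is no real obstacle in this argument; it is essentially the Riesz functional calculus in a finite-dimensional setting. The only point to be careful with is that, although $L$ and $L'$ are real, their eigenvalues may be non-real, which is why $f_i$ is constructed as a $\CC^n$-valued (rather than $\RR^n$-valued) function. This matches exactly the statement of the lemma.
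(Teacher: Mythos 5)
Your proof is correct. The paper itself offers no proof of this lemma --- it is stated as an elementary fact that Anderson uses --- so there is nothing in the text to compare against, and your Riesz--projector argument is the standard clean way to establish it. The key points all check out: the number of eigenvalues of $L'$ (counted with multiplicity) inside each disk $D_i$ is locally constant by the argument principle applied to the characteristic polynomial, so for $L'$ in a suitable neighborhood each disk contains exactly one simple eigenvalue (whence $U\subset\GL^*(n,\RR)$); the contour integral defining $P_i(L')$ depends smoothly (indeed real-analytically) on $L'$ because the resolvent does so away from the spectrum; and $P_i(L')v_i$ is a nonzero vector in the one-dimensional eigenspace once $U$ is shrunk so that $P_i(L')v_i\neq 0$. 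An equally valid, more pedestrian route would be to get smooth simple-root functions $\mu_i(L')$ from the implicit function theorem applied to the characteristic polynomial and then take $f_i(L')$ to be a suitable nonzero column of the adjugate of $L'-\mu_i(L')I$; your version sidesteps the bookkeeping of choosing such a column and is the cleaner of the two.
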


So to obtain an equivariant analogue of Theorem
\ref{thm:Anderson} it suffices to define some open and dense
subset of the set of equivariant automorphisms of a vector
space enjoying the same property as $\GL^*(n,\RR)$. This is the purpose
of the following lemma, which also proves a property on centralizers
that will be used later in the paper.

Suppose that $V$ is an $n$-dimensional real vector space, and
that a finite group $G$ acts linearly on $V$. Denote the
centralizer of any $\Lambda\in\Aut(V)$ by
$$Z(\Lambda)=\{\Lambda'\in\Aut(V)\mid \Lambda\Lambda'=\Lambda'\Lambda\}.$$
Let $\Aut_G(V)$ denote the Lie group of automorphisms of $V$
commuting with the $G$ action.
Define $\Aut^*_G(V)$ to be the set of all $\Lambda\in\Aut_G(V)$
such that for any $\lambda\in\CC$ the ($G$-invariant) subspace
$\Ker(\Lambda-\lambda\Id)\subset V$ is irreducible as a
representation of $G$. Given a basis $a_1,\dots,a_n\in V\otimes\CC$ we denote by
$(a_1,\dots,a_n):\CC^n\to V\otimes\CC$ the isomorphism
$(\lambda_1,\dots,\lambda_n)\mapsto\sum\lambda_ia_i$.

\begin{lemma}
\label{lemma:automorfismes-Gamma-generics} The subset
$\Aut^*_G(V)$ is open and dense in $\Aut_G(V)$. Any
$\Lambda\in\Aut^*_G(V)$ has a neighborhood $U\subset
\Aut^*_G(V)$ with smooth maps $f_1,\dots,f_n:U\to V\otimes\CC$
so that for any $\Lambda'\in U$ the vectors
$f_1(\Lambda'),\dots,f_n(\Lambda')$ form a basis of
$V\otimes\CC$ with respect to which $\Lambda'$ diagonalizes, and
conjugation by $(f_1',\dots,f_n')(f_1,\dots,f_n)^{-1}$
gives an isomorphism
$$Z(\Lambda)\stackrel{\simeq}{\longrightarrow}Z(\Lambda').$$
\end{lemma}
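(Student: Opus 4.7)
My plan is to reduce the lemma to standard facts about generic matrices by exploiting the $G$-isotypic decomposition of $V\otimes\CC$. First I would write $V\otimes\CC=\bigoplus_j W_j^{n_j}$, where $W_j$ ranges over the complex irreducibles of $G$ occurring in $V\otimes\CC$ and $n_j=\dim\Hom_G(W_j,V\otimes\CC)$. By Schur's lemma every $\Lambda\in\Aut_G(V\otimes\CC)$ splits as $\Lambda=\bigoplus_j\Id_{W_j}\otimes\Lambda_j$ with $\Lambda_j\in\GL(n_j,\CC)$, and the $\mu$-eigenspace of $\Lambda$ on $V\otimes\CC$ becomes $\bigoplus_j W_j\otimes\ker(\Lambda_j-\mu)$; this is $G$-irreducible exactly when a single $j$ contributes with a one-dimensional kernel. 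Thus $\Lambda\in\Aut_G^*(V)$ is equivalent to (a) each $\Lambda_j$ has $n_j$ distinct eigenvalues in $\CC$, and (b) the spectra of distinct $\Lambda_j$'s are disjoint. Both conditions are Zariski-open and nonempty on $\prod_j\GL(n_j,\CC)$; since $\Aut_G(V)$ is a Zariski-dense real form of $\Aut_G(V\otimes\CC)$, they remain open and dense on $\Aut_G(V)$, proving the first assertion.

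For the second assertion I would invoke the holomorphic functional calculus. Fix $\Lambda\in\Aut_G^*(V)$, enumerate its distinct eigenvalues $\mu_1,\dots,\mu_s\in\CC$, let $E_k=\ker(\Lambda-\mu_k)$ be the corresponding $G$-irreducible eigenspaces of dimensions $d_k$, and pick small pairwise disjoint loops $\gamma_k\subset\CC$ around each $\mu_k$. For $\Lambda'$ in a small neighborhood $U\subset\Aut_G^*(V)$ of $\Lambda$, the spectral projectors
$$P_k(\Lambda')=\tfrac{1}{2\pi\imag}\oint_{\gamma_k}(z-\Lambda')^{-1}\,dz$$
depend smoothly on $\Lambda'$, are $G$-equivariant, and project onto the $\Lambda'$-eigenspace $E_k'$ for the unique eigenvalue $\mu_k'$ of $\Lambda'$ inside $\gamma_k$, with $\dim E_k'=d_k$. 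Next I would choose a basis $a_1,\dots,a_n$ of $V\otimes\CC$ adapted to $\bigoplus_k E_k$ and compatible with complex conjugation $c$, in the sense that $c(a_i)=a_{\tau(i)}$ for an involution $\tau$ pairing indices according to $E_k\leftrightarrow c(E_k)=E_{\bar k}$; this is possible because eigenspaces with real $\mu_k$ admit real bases, while conjugate pairs of eigenspaces can be related by $c$. Setting $f_i(\Lambda')=P_{k(i)}(\Lambda')(a_i)$, where $k(i)$ is the index with $a_i\in E_{k(i)}$, produces smooth maps with $f_i(\Lambda)=a_i$, and after possibly shrinking $U$ the vectors $f_1(\Lambda'),\dots,f_n(\Lambda')$ form a basis of $V\otimes\CC$ diagonalising $\Lambda'$.

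For the centraliser claim, write $A=(a_1,\dots,a_n)$ and $A'=(f_1(\Lambda'),\dots,f_n(\Lambda'))$. Both $A^{-1}\Lambda A$ and $A'^{-1}\Lambda'A'$ are diagonal with the \emph{same} equality pattern on the diagonal, because the multiplicity of each eigenvalue is $d_k=\dim W_{j(k)}$, determined by the irreducible $G$-type and preserved under small perturbations inside $\Aut_G^*(V)$. Consequently the centralisers of these two diagonal matrices in $\End(\CC^n)$ coincide, so conjugation by $M=A'A^{-1}\in\GL(V\otimes\CC)$ induces a bijection from $Z(\Lambda)$ onto $Z(\Lambda')$ inside $\Aut(V\otimes\CC)$. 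The main obstacle I anticipate is to ensure that this bijection restricts to the real centralisers $Z(\Lambda),Z(\Lambda')\subset\Aut(V)$, which reduces to checking that $M$ commutes with $c$; this should follow from the $c$-compatibility of the $a_i$'s together with the identity $c\circ P_k(\Lambda')=P_{\bar k}(\Lambda')\circ c$, itself a consequence of $\Lambda'$ being a real operator.
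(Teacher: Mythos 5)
Your proof is correct and follows the same basic strategy as the paper's: an isotypic decomposition plus Schur's lemma reduces everything to matrices $\Lambda_j$ acting on multiplicity spaces, membership in $\Aut_G^*(V)$ becomes "simple spectrum on each factor, disjoint spectra across factors," and this is an open condition cut out by nonvanishing of discriminants and resultants, hence dense. There are three genuine differences worth recording. (i) You decompose $V\otimes\CC$ into \emph{complex} isotypic pieces, whereas the paper decomposes $V$ over the irreducible characters of $G$ and implicitly treats $\End_G(V_\xi)$ as $\RR$; your translation of the defining condition ("a single $j$ contributes with a one-dimensional kernel") agrees with the paper's real-representation condition only when the relevant irreducibles are of real type --- for components of complex or quaternionic type a real-irreducible eigenspace of $\Lambda$ complexifies into two complex irreducibles, so your characterization of $\Aut_G^*(V)$ is not literally the paper's. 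Since both versions are open and dense and the conclusions you derive are the ones actually used later, this is a definitional mismatch rather than an error, but you should state which convention you are proving the lemma for. (ii) For the smooth eigenbasis the paper simply applies Lemma \ref{lemma:diag} to each $\Lambda_\xi$; your Riesz-projector construction $P_k(\Lambda')=\frac{1}{2\pi\imag}\oint_{\gamma_k}(z-\Lambda')^{-1}dz$ reproves that lemma in a way that handles the multiplicity $\dim W_j>1$ directly and makes the $G$-equivariance of the construction manifest --- a real gain in this equivariant setting. (iii) You explicitly address why conjugation by $M=(f_1',\dots,f_n')(f_1,\dots,f_n)^{-1}$ carries the \emph{real} centralizer $Z(\Lambda)\subset\Aut(V)$ onto $Z(\Lambda')\subset\Aut(V)$, namely by arranging $M$ to commute with complex conjugation via a $c$-compatible choice of the $a_i$; the paper's proof only shrinks $U$ so that the eigenvalue-coincidence pattern $\lambda_i(\Lambda')=\lambda_j(\Lambda')\Leftrightarrow\lambda_i(\Lambda)=\lambda_j(\Lambda)$ is preserved and leaves the reality of the conjugating map implicit. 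Your treatment of this last point is more complete than the paper's.
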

\begin{proof}
Denote by $\wh{G}$ the set of irreducible characters of $G$.
For any $\xi\in \wh{G}$ let $V_{\xi}$ be a $G$-representation
with character $\xi$. As a
$G$-representation, we may identify $V$ with $\bigoplus_{\xi\in\wh{G}}V_{\xi}\otimes
E_{\xi}$, where each $E_{\xi}$ is a vector space with trivial
$G$-action. By Schur's lemma the space of $G$-equivariant
endomorphisms of $V$ is
$$\End_G(V)=\bigoplus_{\xi\in\wh{G}}\End E_{\xi}.$$
An endomorphism $\Lambda=(\Lambda_{\xi})_{\xi}$ (where
$\Lambda_{\xi}\in\End E_{\xi}$ for each $\xi$) belongs to
$\Aut_G(V)$ exactly when $\prod_{\xi}\det\Lambda_{\xi}\neq 0$,
and it belongs to $\Aut_G^*(V)$ if and only if, additionally,
no root of the polynomial
$\prod_{\xi}\det(\Lambda_{\xi}-x\Id_{E_{\xi}})\in\RR[x]$ has
multiplicity bigger than one. This condition implies that
$\Lambda_{\xi}\in\GL^*(E_{\xi})$ for each $\xi$. Applying Lemma \ref{lemma:diag}
to each $\Lambda_{\xi}$ we deduce the existence
of a neighborhood $U\subset
\Aut^*_G(V)$ of $\Lambda$ and smooth maps $f_1,\dots,f_n:U\to V\otimes\CC$
and $\lambda_1,\dots,\lambda_n:U\to\CC$
so that for any $\Lambda'\in U$ we have $\Lambda'(f_j(\Lambda'))=\lambda_j(\Lambda') f_j(\Lambda')$
for every $j$.
For any $\Lambda'\in U$ we can identify
$Z(\Lambda')$ with the subgroup of $\Aut(V)$ preserving the subspace of $V\otimes\CC$
spanned by $\{f_j(\Lambda')\mid\lambda_j(\Lambda')=\lambda\}$ for each $\lambda$.
Shrinking $U$ if necessary we may assume that for any $i,j$ and any $\Lambda'\in U$ we have
$$\lambda_i(\Lambda')=\lambda_j(\Lambda')\quad\Longleftrightarrow\quad
\lambda_i(\Lambda)=\lambda_j(\Lambda),$$
so conjugation by $(f_1',\dots,f_n')(f_1,\dots,f_n)^{-1}$
gives an isomorphism
$Z(\Lambda)\stackrel{\simeq}{\longrightarrow}Z(\Lambda')$.
\end{proof}

Take some $G$-invariant Euclidean metric on $V$, let $D\subset
V$ be an open disk centered at $0$, and let $\Delta\subset D$
be a smaller concentric disk. Let $r$ be a natural number. For
any smooth map $\xX:D\to V$ define
$\|\xX\|_{\Delta,r}=\sup_{x\in \Delta}\|D^r\xX(x)\|$ as before.
This defines a topology on $\Map_{G,0}(D,V)$, the set of all
$G$-equivariant smooth maps $D\to V$ fixing $0$. Let
$\Map_{G,0}(D,V)_{\Delta,r}$ be the resulting topological
space. Define analogously $\Map_{0}(D,V)_{\Delta,r}$ by
dropping the equivariance condition. Combining the previous
lemma with the arguments in \cite[\S2, Lemma]{A1} and \cite[\S
6]{St} we obtain the following.

\begin{theorem}
\label{thm:Anderson-equivariant} Let $L\in\Aut_G^*(V)$ have
eigenvalues $\lambda_1,\dots,\lambda_n$ satisfying
(\ref{eq:no-resonances}) and suppose that each $\lambda_i$ has
negative real part. There is a neighborhood $N$ of $L$ in
$\Map_{G,0}(D,V)_{\Delta,r+1}$ and a continuous map
$$\Phi:N\to\Map_{G,0}(D,V)_{\Delta,r}$$ such that:
\begin{enumerate}
\item for every $\xX\in N$, $D\Phi(\xX)(0)=\Id$, so
    $\Phi(\xX)$ gives a diffeomorphism $U_{\xX}\to
    U'_{\xX}$ between neighborhoods of $0$,
\item for every $\xX\in N$,
    $\Phi(\xX)\circ\xX\circ\Phi(\xX)^{-1}:U'_{\xX}\to V$ is
    equal to $D\xX(0)$.
\end{enumerate}
\end{theorem}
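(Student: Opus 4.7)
The plan is to follow Anderson's proof of the non-equivariant parametric theorem (Theorem \ref{thm:Anderson}) almost line by line, using Lemma \ref{lemma:automorfismes-Gamma-generics} as the $G$-equivariant substitute for Lemma \ref{lemma:diag} and carrying every construction in $G$-equivariant function spaces throughout.

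First I would shrink $N$ so that $D\xX(0)\in U$ for every $\xX\in N$, where $U\subset\Aut_G^*(V)$ is the neighborhood furnished by Lemma \ref{lemma:automorfismes-Gamma-generics}. Continuous dependence of the eigenvalues on $D\xX(0)$, together with openness of the non-resonance condition (\ref{eq:no-resonances}) and of the negative-real-part condition, lets me shrink $N$ further so that these hypotheses persist for every $\xX\in N$. Using the smooth maps $f_1,\dots,f_n$ I then obtain a continuously varying $\CC$-basis of $V\otimes\CC$ in which $D\xX(0)$ is diagonal; pairing complex conjugate eigenvectors produces a continuously varying real change of coordinates that brings $D\xX(0)$ into a fixed block-diagonal real-normal-form pattern with continuously varying entries.

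In these trivialized coordinates, Anderson's argument from \cite[\S 2, Lemma]{A1} applies essentially verbatim: the only step in which he needs the eigenvalues of $L$ to be pairwise distinct is the linear-algebra step I have just replaced by Lemma \ref{lemma:automorfismes-Gamma-generics}; the remainder is an iterative construction of the linearizing map, solving at each stage a cohomological equation of the form $L\,u - u\circ L = N_k$ whose solvability and continuous dependence on parameters come from the non-resonance condition. This produces a continuous map $\Phi_0:N\to\Map_0(D,V)_{\Delta,r}$ satisfying (1) and (2) but without the equivariance clause. To promote $\Phi_0$ to a map with values in $\Map_{G,0}(D,V)_{\Delta,r}$, I would observe that every operation used in the construction — composition with the $G$-equivariant $L$, Taylor truncation, projection onto a $G$-stable complement of the kernel of the homological operator on each polynomial-degree piece, and the fixed-point iteration itself — commutes with the $G$-action, so starting from a $G$-equivariant $\xX$ one obtains $G$-equivariant approximants at every stage and, in the limit, a $G$-equivariant $\Phi(\xX)$. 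Finally, since Anderson states his result for diffeomorphisms, I would invoke the translation described in \cite[\S 6]{St} to pass from the parametric linearization of the time-one maps $\Phi_1^{\xX}$ to the required parametric linearization of the vector fields $\xX$ themselves.

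The main obstacle is the continuous diagonalization of $D\xX(0)$ in families in the presence of the forced high multiplicities coming from the $G$-action; once Lemma \ref{lemma:automorfismes-Gamma-generics} is in hand, this reduces to Anderson's original argument applied on each isotypic component $\End E_\xi$, and the promotion of the non-equivariant Anderson-type linearization to a $G$-equivariant one is a formal consequence of the $G$-equivariance of each step of the iterative scheme.
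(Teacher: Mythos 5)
Your first half coincides with the paper's: shrink $N$, use Lemma \ref{lemma:automorfismes-Gamma-generics} in place of Lemma \ref{lemma:diag} to diagonalize $D\xX(0)$ continuously despite the forced multiplicities, and run Anderson's argument (translated from diffeomorphisms to vector fields via \cite[\S 6]{St}) to produce a continuous, not necessarily equivariant, linearizing map $\Phi_0:N\to\Map_{0}(D,V)_{\Delta,r}$. Where you diverge is in how equivariance is obtained. You propose to re-enter the Sternberg--Anderson construction and check that every stage (formal solution of the homological equations, the flattening/iteration step, etc.) commutes with the $G$-action, so that the output is automatically equivariant. The paper instead keeps $\Phi_0$ as a black box and exploits the fact that the conjugation identity can be rewritten as $\Phi_0(\xX)\circ\xX=D\xX(0)\circ\Phi_0(\xX)$, which is \emph{linear} in $\Phi_0(\xX)$: averaging, $\Phi(\xX)(x)=\frac{1}{|G|}\sum_{g\in G}g\,\Phi_0(\xX)(g^{-1}x)$, preserves this identity, the normalization $D\Phi(\xX)(0)=\Id$, and continuity in $\xX$, and manifestly produces a $G$-equivariant map. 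Your route is plausible but is the riskier of the two: it requires verifying that Anderson's construction makes no non-canonical choices (and your mention of ``projection onto a $G$-stable complement of the kernel of the homological operator'' is slightly off, since under the non-resonance hypothesis the homological operator is invertible on each polynomial degree, so there is no kernel; the genuinely delicate point is the treatment of the flat remainder in the iteration). The averaging observation buys you the freedom to never open that black box, at the cost of nothing, so you should prefer it.
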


The only part in the statement of Theorem \ref{thm:Anderson-equivariant}
that does not follow immediately is the fact
that the conjugating map $\Phi$ may be chosen to take values in
$\Map_{G,0}(D,V)_{\Delta,r}$. Sternberg's
argument provides a (continuous, by Anderson) map
$\Phi_0:N\to\Map_{0}(D,V)_{\Delta,r}$, satisfying (i)
$D\Phi_0(\xX)(0)=\Id$ and (ii)
$\Phi_0(\xX)\circ\xX\circ\Phi_0(\xX)^{-1}=D\xX(0)$ (in a
neighborhood of $0$), but $\Phi_0(\xX)$ is not necessarily
equivariant. Now, equality (2) is equivalent to
\begin{equation}
\label{eq:conjugacio-Phi-0}
\Phi_0(\xX)\circ\xX=D\xX(0)\circ \Phi_0(\xX),
\end{equation}
so setting
$$\Phi(\xX)(x)=\frac{1}{|G|}\sum_{g\in G}g\Phi_0(\xX)(g^{-1}x)\in V$$
for every $x\in D$, we have $\Phi(\xX)\in\Map_{G,0}(D,V)_{\Delta,r}$,
and equation (\ref{eq:conjugacio-Phi-0})
immediately gives $\Phi(\xX)\circ\xX=D\xX(0)\circ \Phi(\xX)$.
Trivially we also have $D\Phi(\xX)(0)=\Id$ for every $\xX$, and
$\Phi(\xX):D\to V$ is $G$-equivariant. The map
$\Phi:N\to\Map_{G,0}(D,V)_{\Delta,r}$ is continuous, because
$\Phi_0$ is, so now Theorem \ref{thm:Anderson-equivariant} is
clear.

\section{The space of metrics $\mM_0$}
\label{s:mM-0-mM-1}

\subsection{Preliminaries}

\newcommand{\codim}{\operatorname{codim}}
\newcommand{\free}{\operatorname{free}}

The following result is a standard consequence of the existence of linear
slices for smooth compact group actions (see e.g. \cite[Chap. VI, \S 2]{Br}).

\begin{lemma}
\label{lemma:linearisation}
Let $G$ be a finite group acting smoothly on a connected manifold $X$.
\begin{enumerate}
\item For each subgroup $H\subseteq G$ the fixed point set
    $X^H=\{x\in X\mid H\subseteq G_x\}$ is the disjoint union of finitely many closed
    submanifolds of $X$ (not necessarily of the same dimension) satisfying
    $T_x(X^H)=(T_xX)^H$ for every $x\in X^H$. In
    particular, either $X^H=X$ or $X^H$ has empty interior.
\item Assume that the action of $G$ on $X$ is effective.
    Then $X^{\free}=\{x\in X\mid G_x=\{1\}\}$ is open and
    dense in $X$.
\end{enumerate}
\end{lemma}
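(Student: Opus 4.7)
The plan is to reduce both statements to the linear action of $G_x$ on $T_xX$ via an equivariant chart. First I would average any Riemannian metric over $G$ to produce a $G$-invariant one. At any point $x\in X$, the resulting exponential map $\exp_x$ is $G_x$-equivariant on a small ball, and so gives a $G_x$-equivariant diffeomorphism from a neighborhood of $0$ in $T_xX$ (with the linear $G_x$-action) to a neighborhood of $x$ in $X$. Whenever $x\in X^H$ one has $H\subseteq G_x$, so this chart identifies $X^H$ locally with the linear subspace $(T_xX)^H\subset T_xX$.

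From this the local part of (1) is immediate: $X^H$ is a smooth submanifold of $X$ near every one of its points, with $T_x(X^H)=(T_xX)^H$; the connected components are then closed submanifolds of possibly varying dimension, and since $X$ is compact throughout the paper they are finite in number. For the alternative, let $V=\{x\in X^H\mid (T_xX)^H=T_xX\}$; I claim $V$ is open and closed in $X$. Openness: if $x\in V$ then $H$ acts trivially on $T_xX$, so via the exponential chart $H$ acts trivially on a whole neighborhood of $x$, which therefore sits in $V$. Closedness: suppose $x_n\in V$ converges to $x$; then $x\in X^H$ (as $X^H$ is closed), and for $n$ large $x_n$ lies in the exponential chart at $x$ together with a neighborhood on which $H$ acts trivially. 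Pulled back via $\exp_x^{-1}$ this gives an open subset of $T_xX$ on which the linear $H$-action is trivial, and by linearity this action must then be trivial on all of $T_xX$, so $x\in V$. Connectedness of $X$ forces $V=\emptyset$ or $V=X$; in the former $X^H$ has empty interior, in the latter $X^H=X$.

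For (2) we have $X\setminus X^{\free}=\bigcup_{g\in G\setminus\{1\}}X^{\la g\ra}$, a finite union of closed sets. Effectiveness of the $G$-action rules out $X^{\la g\ra}=X$ for any $g\neq 1$, so by (1) each $X^{\la g\ra}$ has empty interior. A finite union of closed nowhere dense sets is closed and nowhere dense, so $X^{\free}$ is open and dense.

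The only mildly delicate step is the closedness of $V$ in the dichotomy: the argument hinges on the fact that the $H$-action on a single tangent space is linear, so that triviality on any open subset propagates to the whole space, a conclusion that would fail for a merely smooth action.
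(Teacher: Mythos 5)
Your proof is correct and is precisely the standard argument the paper has in mind: the paper offers no proof of its own, merely citing Bredon for the existence of linear slices, and your equivariant exponential chart for an averaged invariant metric is exactly such a slice for a finite group, with the clopen-set dichotomy and the finite union of nowhere dense fixed-point sets being the standard way to derive (1) and (2). The only caveat, which you correctly flag, is that finiteness of the number of components of $X^H$ genuinely requires compactness of $X$ (not assumed in the statement, but satisfied in all the lemma's applications in the paper).
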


%
%

\subsection{Sinks, sources, and (un)stable manifolds}
\label{ss:sink,sources}
Let $n>1$ and let $M$ be a compact connected
$n$-dimensional manifold. Suppose that $M$ is
endowed with a smooth and effective action of a finite group
$\Gamma$. Denote the stabilizer of any $x\in M$ by
$$\Gamma_x=\{\gamma\in\Gamma\mid\gamma x=x\}.$$
Let $\mM$ denote the space of Riemannian metrics on $M$, and let $\mM^{\Gamma}\subset\mM$ be the
subset of $\Gamma$-invariant metrics.

Let
$$f:M\to\RR$$
be a $\Gamma$-invariant Morse function. This function will be
fixed throughout the rest of the paper.
If $p$ is a critical point of $f$, so that $\nabla^gf(p)=0$,
the derivative $D\nabla^gf(p)$ is a well defined endomorphism
of $T_pM$ (one may define it using a connection on $TM$, but
the result will be independent of the chosen connection). The endomorphism
$D\nabla^gf(p)$ is self adjoint
with respect to the Euclidean norm on $T_pM$ given by $g$, so
$D\nabla^gf(p)$ diagonalizes.

Denote the index of a critical
point $p$ of $f$ by $\Ind_f(p)$.
Let $\Crit(f)\subset M$ be the set of critical points of $f$,
and for any $k$ let
$$\Crit_k(f)=\{p\in\Crit(f)\mid \Ind_f(p)=k\}.$$
Define the set of sinks
of $f$ to be $\II=\Crit_n(f)$ and the set of sources to be
$\OO=\Crit_0(f)$. The
points in $\II$ (resp. $\OO$) are the sinks (resp. sources)
of the the gradient vector field $\nabla^gf$ for every $g$.
Denote also by $\EE=\II\cup\OO$ the collection of all local
extremes of $f$.

For any $g\in\mM$ and any real number $t$ let $\Phi^g_t:M\to M$
denote the flow at time $t$ of $\nabla^gf$. Define the stable
and unstable manifolds of $p\in\Crit(f)$ to be, respectively,
$$W^s_g(p)=\{q\in M\mid\lim_{t\to\infty}\Phi^g_t(q)=p\},
\qquad W^u_g(p)=\{q\in M\mid\lim_{t\to-\infty}\Phi^g_t(q)=p\}.$$

For any $p\in\EE$ and any $g\in\mM^{\Gamma}$ let
$$L_g(p)=\{\psi\in \Aut(T_pM)\mid (D\nabla^gf(p)) \psi=\psi(D\nabla^gf(p))\}=Z(D\nabla^gf(p)).$$
Since $\Gamma$ is finite and acts effectively on $M$, we can
identify $\Gamma_p$ with a subgroup of $L_g(p)$ using (1) in Lemma \ref{lemma:linearisation} above.

\subsection{The metrics in $\mM_0$: generic eigenvalues at critical points}
\label{ss:mM-0}
Let $$\mM_0\subset\mM^{\Gamma}$$ denote set of $\Gamma$-invariant
metrics $g$ satisfying the following conditions:
\begin{enumerate}
\item[(C1)] for any $p\in\EE$ the eigenvalues $\lambda_1,\dots,\lambda_n$
of the linearization $D\nabla^gf(p)$ satisfy condition (\ref{eq:no-resonances}) in
Theorem \ref{thm:Sternberg};
\item[(C2)] if $p,q\in\EE$, then the eigenvalues of
    $D\nabla^gf(p)$ and $D\nabla^gf(q)$ coincide if and
    only if $p$ and $q$ belong to the same $\Gamma$-orbit;
\item[(C3)] for any $p\in\EE$ we have  $D\nabla^gf(p)\in\Aut_{\Gamma_p}^*(T_pM)$.
\end{enumerate}

Condition (C1), combined with Sternberg's Theorem \ref{thm:Sternberg} and
an easy adaptation of a theorem of Kopell \cite[Theorem 6]{K}
from maps to vector fields, implies that if $p\in\II$ then
the map
\begin{equation}
\label{eq:Aut-L}
D(p):\Aut(\nabla^gf|_{W^s_g(p)})\to L_g(p)
\end{equation}
sending any $\phi\in \Aut(\nabla^gf|_{W^s_g(p)})$ to
$D\phi(p)\in L_g(p)$ is an isomorphism (it is clear that any
such $\phi$ fixes $p$); furthermore, there is a diffeomorphism
$h(p):T_pM \to W^s_g(p)$ making the following diagram
commutative:
\begin{equation}
\label{eq:accio-linealitzada}
\xymatrix{L_g(p)\times T_pM \ar[d]_{D(p)^{-1}\times h(p)}\ar[r] & T_pM \ar[d]^{h(p)} \\
\Aut(\nabla^gf|_{W^s_g(p)})\times W^s_g(p)\ar[r] & W^s_g(p),}
\end{equation}
where the horizontal arrows are the maps defining the actions.

\begin{remark}
\label{rmk:primer-entorn}
Strictly speaking, Sternberg's theorem gives a diffeomorphism
between a neigborhood of $0$ in $T_pM$ and a neighborhood of
$p$ in $W^s_g(p)$ which commutes the flows of $D\nabla^gf(p)$
and of $\nabla^gf$, but such diffeomorphism can be extended
uniquely imposing compatibility with the flows to yield $h(p)$.
\end{remark}

Similarly, for any source $p\in\OO$ the analogous map
$\Aut(\nabla^gpf|_{W^u_g(p)})\to L_g(p)$ is an isomorphism and
there is a diffeomorphism $T_pM \to W^u_g(p)$ which is
equivariant in the obvious sense, analogous to the case of
sinks.

Condition (C2) implies that for any $\phi\in\Aut(\nabla^gf)$
and any $p\in\EE$ we have $\phi(p)=\gamma p$ for some
$\gamma\in\Gamma$. Of course a priori $\gamma$ may depend on
$p$, but in the course of proving Theorem \ref{thm:main} we
will deduce that for $g$ belonging to a residual subset of
$\mM_0$ and any $\phi\in\Aut(\nabla^gf)$, there exists some
$\gamma$ such that $\phi(p)=\gamma p$ for each $p\in\EE$.

By Lemma \ref{lemma:automorfismes-Gamma-generics} $\mM_0$ is
open and dense in $\mM^{\Gamma}$. Moreover, combining (C3)
with Lemma
\ref{lemma:automorfismes-Gamma-generics} and Theorem
\ref{thm:Anderson-equivariant} (together with the obvious analogue
of Remark \ref{rmk:primer-entorn}) we deduce the following
result.

\begin{lemma}
\label{lemma:C1-families} Any $g\in\mM_0$ has a neighborhood
$\uU\subset\mM_0$ such that for any $p\in\EE$ the following holds.
Let $V_p=T_pM$. Endow the space of maps $\Map(V_p,M)$
with the weak (compact-open) $\cC^{\infty}$-topology \cite[Chap 2, \S1]{H}.
For any $g'\in\uU$ there is a linear vector field
$$\xX_{g'}(p):V_p\to V_p$$
depending continuously on $g'$ and a $\Gamma_p$-equivariant embedding
$$h_{g'}(p):V_p\to M$$
depending also continuously on $g'$ with the following properties.
\begin{enumerate}
\item $Z(\xX_{g'}(p))=Z(\xX_g(p))=L_g(p)$ for every
    $g'\in\uU$,
\item $h_{g'}(p)$ identifies $\xX_{g'}(p)$ with the restriction of $\nabla^{g'}f$ to $h_{g'}(p)(V_p)$;
hence, $$h_{g'}(p)(V_p)=W_g^s(p)\qquad\text{ if $p\in\II$}$$ and
$$h_{g'}(p)(V_p)=W_g^u(p)\qquad\text{ if $p\in\OO$}.$$
\end{enumerate}
\end{lemma}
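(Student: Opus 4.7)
The plan is to produce $\xX_{g'}(p)$ and $h_{g'}(p)$ by applying the equivariant family Sternberg theorem at each critical point and then globalising the resulting local linearisation using the gradient flow. Since $f$ is fixed, the map $g'\mapsto D\nabla^{g'}f(p)$ is continuous with values in $\End(T_pM)$ for every $p\in\EE$. The non-resonance condition (C1) is open, and (C3) is open by Lemma \ref{lemma:automorfismes-Gamma-generics}; the sign of the real parts of the eigenvalues is also locally constant. I therefore shrink $\uU$ so that (C1) and (C3) persist for all $g'\in\uU$ and all $p\in\EE$, and set $\xX_{g'}(p):=D\nabla^{g'}f(p)$, viewed as a linear vector field on $V_p:=T_pM$; continuous dependence on $g'$ is automatic.

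Fix $p\in\II$ (the case $p\in\OO$ is identical after switching $f$ to $-f$) and choose a $\Gamma_p$-equivariant chart identifying a small neighborhood of $p$ in $M$ with a $\Gamma_p$-invariant disk $D\subset V_p$ about $0$. Under this chart $\nabla^{g'}f$ pulls back to an element of $\Map_{\Gamma_p,0}(D,V_p)$ depending continuously on $g'$ in every finite $\cC^r$-norm. For each $r$, Theorem \ref{thm:Anderson-equivariant} then supplies a continuous map assigning to each $g'$ near $g$ a $\Gamma_p$-equivariant diffeomorphism from a neighborhood of $0$ onto its image, with derivative $\Id$ at $0$ and conjugating $\nabla^{g'}f$ to $\xX_{g'}(p)$. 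Its inverse is the local embedding $h^{\mathrm{loc}}_{g'}(p)$, continuous in $g'$ in the weak $\cC^{\infty}$-topology on its small domain.

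I then extend $h^{\mathrm{loc}}_{g'}(p)$ to all of $V_p$ as in Remark \ref{rmk:primer-entorn}: for $v\in V_p$, pick $T\geq 0$ large enough that $e^{-T\xX_{g'}(p)}v$ lies in the domain of $h^{\mathrm{loc}}_{g'}(p)$, and define
\[
h_{g'}(p)(v) := \Phi^{g'}_T\bigl(h^{\mathrm{loc}}_{g'}(p)(e^{-T\xX_{g'}(p)}v)\bigr).
\]
This is independent of $T$ by the intertwining property, hence well-defined, $\Gamma_p$-equivariant, and satisfies statement (2) with image equal to the stable manifold of $p$ for $\nabla^{g'}f$. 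Weak $\cC^{\infty}$-continuity on any compact $K\subset V_p$ follows by choosing a single $T$ that works uniformly for $v\in K$ and $g'$ nearby and combining the continuity of $h^{\mathrm{loc}}_{g'}(p)$ in $g'$ with the $\cC^{\infty}$-continuity on compacts of $\Phi^{g'}_T$ in $g'$, which is the content of the paper's first appendix.

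Finally, statement (1) is the translation of the last assertion of Lemma \ref{lemma:automorfismes-Gamma-generics}: after shrinking $\uU$ once more, conjugation by the continuous family of diagonalising bases furnishes a canonical isomorphism identifying $Z(\xX_{g'}(p))$ with $Z(\xX_g(p))=L_g(p)$. The main obstacle I expect is this globalisation step, since Theorem \ref{thm:Anderson-equivariant} only delivers continuity in a fixed finite $\cC^r$-norm on a fixed small disk, and one must promote this to continuity in the weak $\cC^{\infty}$-topology across all of $V_p$ by invoking smooth dependence of the gradient flow on the metric; everything else in the argument is essentially bookkeeping on top of Theorems \ref{thm:Anderson-equivariant} and the equivariant openness provided by Lemma \ref{lemma:automorfismes-Gamma-generics}.
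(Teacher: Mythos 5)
Your strategy coincides with the paper's (the paper only sketches this proof: combine (C3) with Lemma \ref{lemma:automorfismes-Gamma-generics}, Theorem \ref{thm:Anderson-equivariant}, and the analogue of Remark \ref{rmk:primer-entorn}), and your assembly of those ingredients --- openness of (C1) and (C3) along $\uU$, the equivariant family linearisation at each $p\in\EE$, propagation of the local chart by the flow, and continuity via smooth dependence of the flow on the metric --- is the intended one. You also correctly single out the only delicate analytic point, namely upgrading the finite $\cC^r$ continuity furnished by Theorem \ref{thm:Anderson-equivariant} to weak $\cC^{\infty}$ continuity on compacts of $V_p$.

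There is, however, one place where your construction does not deliver what the lemma literally asserts. With $\xX_{g'}(p):=D\nabla^{g'}f(p)$ and $Dh^{\mathrm{loc}}_{g'}(p)(0)=\Id$, the eigenspaces of $\xX_{g'}(p)$ move with $g'$, so $Z(\xX_{g'}(p))$ and $Z(\xX_g(p))$ are merely conjugate subgroups of $\Aut(V_p)$; statement (1) asserts a genuine equality, and that literal equality is what is used downstream (e.g.\ in the proof of Lemma \ref{lemma:mM-2-K-open}, where the sets $V_j$ and $L_{g',K}(p)$ must all sit inside one fixed group $L(p)$ independent of the metric). The repair is to absorb the conjugation you invoke only at the very end into the definitions: letting $C_{g'}=(f_1(\Lambda'),\dots,f_n(\Lambda'))(f_1(\Lambda),\dots,f_n(\Lambda))^{-1}$ be the change of basis from Lemma \ref{lemma:automorfismes-Gamma-generics} (real, since the linearisation of a gradient field is self-adjoint and hence has real spectrum, and $\Gamma_p$-equivariant by its Schur-lemma construction), set $\xX_{g'}(p):=C_{g'}^{-1}\circ D\nabla^{g'}f(p)\circ C_{g'}$ and replace your embedding by $h_{g'}(p)\circ C_{g'}$. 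Then the eigenspace decompositions of $\xX_{g'}(p)$ and $\xX_g(p)$ coincide, so (1) holds verbatim, at the price that the derivative of the embedding at $p$ is no longer the identity --- which is precisely the content of the paper's remark following the lemma, a remark your construction as written contradicts. (A minor separate slip: for $p\in\II$ the eigenvalues are negative, so it is $e^{T\xX_{g'}(p)}v$ with $T\geq 0$ large that enters the domain of $h^{\mathrm{loc}}_{g'}(p)$, and the extension should read $\Phi^{g'}_{-T}\bigl(h^{\mathrm{loc}}_{g'}(p)(e^{T\xX_{g'}(p)}v)\bigr)$.)
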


Note that we do not claim that the derivative of $h_{g'}(p)$ at $p$ is the identity: in fact
in general this will not be the case (otherwise we could not pretend to have the identifications
$Z(\xX_{g'}(p))=Z(\xX_g(p))$).

\section{The spheres $S_g(p)$, the distributions $\aA_g(p)$, and the sets $F_g(p)$}

Recall that we assume $\dim M>1$.

\subsection{The spheres $S_g(p)$}
\label{ss:spheres}
For any $g\in\mM$ and any sink $p\in\II$ we denote by $\sim$
the equivalence relation in $W^s_g(p)$ that identifies two
points whenever they belong to the same integral curve of
$\nabla^gf$. We then define
$$S_g(p)=(W^s_g(p)\setminus\{p\})/\sim.$$
Let $\epsilon>0$ be a real number and let $\Sigma\subset M$ be
the $g$-geodesic sphere of radius $\epsilon>0$ and center $p$.
If $\epsilon$ is small enough (which we assume), then $\Sigma$
is a submanifold of $M$ diffeomorphic to $S^{n-1}$ and every
equivalence class in $S_g(p)$ contains a unique representative
in $\Sigma$. Hence, composing the inclusion
$\Sigma\hookrightarrow W^s_g(p)\setminus\{p\}$ with the
projection
$$\pi_p:W^s_g(p)\setminus\{p\}\to S_g(p)$$
gives a bijection $\Sigma\simeq S_g(q)$. This allows us to
transport the smooth structure on $\Sigma$ to a smooth
structure (in particular, a topology) on $S_g(p)$, independent
of $\epsilon$.

If $g\in\mM_0$ then the action of $L_g(p)$ on $S_g(p)$ defined
via the identification (\ref{eq:Aut-L}) is smooth, and so is
the natural action of $\Gamma_p$ on $S_g(p)$ (recall that
$\mM_0\subset\mM^{\Gamma}$).

For any $p\in\II$ and $q\in\OO$ let
$$\Omega_g(p,q):=\pi_p(W^s_g(p)\cap W^u_g(q))\subset S_g(p).$$
Since $W_g^u(q)$ is open in $M$, $\Omega_g(p,q)$ is open in $S_g(p)$.

Similarly, if $q$ is a source we define
$$S_g(q)=(W_g^u(q)\setminus\{q\})/\sim$$
and we denote by
$$\pi_q:W_g^u(q)\setminus\{q\}\to S_g(q)$$
the projection. If $p$ is a sink, then we define
$$\Omega_g(q,p)=\pi_q(W^s_g(p)\cap W^u_g(q)),$$
which is an open subset of $S_g(q)$.

For convenience, if
$p,q\in\II$ or $p,q\in\OO$ we define $\Omega_g(p,q)=\emptyset$.

Since the fibers of the restrictions of $\pi_p$ and $\pi_q$ on
$W^s_g(p)\cap W^u_g(q)$ are the same, there are natural
bijections
$$\sigma_g^{p,q}:\Omega_g(p,q)\to\Omega_g(q,p),\qquad\sigma_g^{q,p}=(\sigma_g^{p,q})^{-1},$$
which are easily seen to be diffeomorphisms.

\subsection{The singular distributions $\aA_g(q)$}
\label{ss:singular-distributions} Assume through the remainder
of this section that $g\in\mM_0$. We will consider, for every
$p\in\EE$, the diagonal action of $L_g(p)$ on $S_g(p)^k$ for
some natural number $k$. If $z=(z_1,\dots,z_k)\in S_g(p)^k$ and
$\psi\in L_g(p)$ we denote
$$\psi z=(\psi z_1,\dots,\psi z_k).$$ Similarly, we will
consider the diagonal extension of the maps $\sigma_g^{p,q}$:
$$\sigma_g^{p,q}:\Omega_g(p,q)^k\to \Omega_g(q,p)^k,\qquad
\sigma_g^{p,q} z=(\sigma_g^{p,q} z_1,\dots,\sigma_g^{p,q} z_k).$$
We are going to use below without explicit notice analogous
diagonal extensions of maps to Cartesian products.

Let $n=\dim M$. Define
\begin{equation}
\label{eq:def-r}
r:=\left[\frac{2n^2}{n-1}+1\right].
\end{equation}
The choice of this number will be justified in the proof of
Lemma \ref{lemma:mM-2-K-dense}.

For any $q\in\EE$ we denote by $\aA_g(q)\subset T(S_g(q)^r)$
the subspace consisting of all tangent vectors given by the
infinitesimal action of the Lie algebra of $L_g(q)$. This
gives, for any $z\in S_g(q)^r$, a linear subspace
$\aA_g(q)(z)\subset T_xS_g(q)^r$ whose dimension may vary with
$z$ (hence, one can think of $\aA_g(q)$ as a singular
distribution). In concrete terms,
$$\aA_g(q)(z)=\{\yY_{g,\sigma}(z)\mid \sigma\in\Lie L_g(q)\},$$
where for any $\sigma\in\Lie L_g(q)$ we denote by $\yY_{g,\sigma}$ the vector field
on $S_g(q)^r$ given by the infinitesimal action of $\sigma$.

\subsection{The subset $F_g(q)\subset S_g(q)^r$}
\label{ss:F-g-q}
We next want to identify a dense open subset of $S_g(q)^r$ on
which the action of $L_g(q)$ has the smallest possible isotropy
subgroup, and on which $\aA_g(q)$ restricts to a vector
subbundle of $T(S_g(q)^r)$. We remark that, since $L_g(q)$ is an infinite group,
in this situation we can not use (2) in Lemma \ref{lemma:linearisation}.
Let
$$\xX_g(q):=D\nabla^gf(p)\in \Lie L_g(q).$$ Note that
$e^{t\xX_g(q)}$ corresponds, via the isomorphism $D(q)$ in
(\ref{eq:Aut-L}), to the flow $\Phi^g_t$, so $e^{t\xX_g(p)}$
acts trivially on $S_g(q)$ and hence on $S_g(q)^r$.

Let us denote $V=T_qM$. Then $X:=\xX_g(q)$ is a diagonalizable endomorphism of $V$.
Denote its eigenvalues by $\lambda_1,\dots,\lambda_k$. Let
$V_j\subseteq V$ be the subspace consisting of eigenvectors with eigenvalue $\lambda_j$.
We have a decomposition $V=V_1\oplus\dots\oplus V_k$ with respect to which we may define
projections $\pi_j:V\to V_j$. Let us say that a collection of vectors $w_1,\dots,w_s\in V_j$
is thick if $s>d_j=\dim V_j$ and for any $1\leq i_1<i_2<\dots<i_{d_j}\leq s$ the vectors
$w_{i_1},\dots,w_{i_{d_j}}$ are linearly independent. Finally, we say that a collection of
vectors $v_1,\dots,v_s\in V$ is thick if for any $j$ the projections
$\pi_j(v_1),\dots,\pi_j(v_s)$ form a thick collection of vectors in $V_j$.
Let $G=L_g(q)$.

\begin{lemma}
\label{lemma:thick-free}
Suppose that $v_1,\dots,v_s$ is a thick collection of vectors, and that for
some $g\in G$ there exist real numbers $t_1,\dots,t_s$ satisfying
$gv_j=e^{t_jX}v_j$ for every $j$. Then $g=e^{tX}$ for some real number $t$.
\end{lemma}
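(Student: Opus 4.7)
The plan is to exploit that $g \in G = Z(X)$ commutes with $X$ and hence preserves each eigenspace $V_j$, so that $g_j := g|_{V_j} \in \Aut(V_j)$ is well-defined. Writing each $v_i = \sum_j \pi_j(v_i)$ and projecting the hypothesis $g v_i = e^{t_i X} v_i$ onto each $V_j$, I would obtain, for all $i,j$,
\[
g_j\,\pi_j(v_i) = e^{t_i\lambda_j}\,\pi_j(v_i).
\]
First I would note that thickness forces $\pi_j(v_i)\neq 0$ for every $i,j$, since a vanishing projection could be completed to a set of $d_j$ linearly dependent vectors in $V_j$. Hence for each fixed $j$ the vectors $\pi_j(v_1),\dots,\pi_j(v_s)$ are $s$ nonzero eigenvectors of $g_j$, with respective eigenvalues $\mu_i^{(j)}:=e^{t_i\lambda_j}$.

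The key step would be a dimension-counting argument showing that, for each fixed $j$, all the $\mu_i^{(j)}$ coincide. Let $\nu_1,\dots,\nu_r$ be the distinct values taken by $\mu_i^{(j)}$, put $I_l=\{i:\mu_i^{(j)}=\nu_l\}$, and let $W_l=\Ker(g_j-\nu_l\Id)\subseteq V_j$. The vectors $\{\pi_j(v_i):i\in I_l\}$ lie in $W_l$; if some $|I_l|$ exceeded $\dim W_l$, one could extract $\dim W_l+1$ linearly dependent vectors from that group and, using $s>d_j$, extend them to a $d_j$-subset of $\{\pi_j(v_1),\dots,\pi_j(v_s)\}$, violating thickness. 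Therefore $|I_l|\leq\dim W_l$ for every $l$, and summing gives $s=\sum_l|I_l|\leq\sum_l\dim W_l\leq d_j$, contradicting $s>d_j$. Consequently $r=1$ and all $\mu_i^{(j)}$ equal a single value $\mu^{(j)}$.

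Finally, since $q\in\EE$ and $f$ is Morse, each $\lambda_j$ is a nonzero real number; the identity $e^{t_i\lambda_j}=\mu^{(j)}$ for all $i$ then forces all $t_i$ to take a common real value $t:=\lambda_j^{-1}\log\mu^{(j)}$ (independent of $j$ a fortiori, since a single $j$ determines it). This in turn gives $\mu^{(j)}=e^{t\lambda_j}$ for every $j$, so $g_j$ acts as the scalar $e^{t\lambda_j}$ on the spanning set $\{\pi_j(v_i)\}$ of $V_j$ (spanning by thickness, as $s\geq d_j$), whence $g_j=e^{t\lambda_j}\Id_{V_j}$ on all of $V_j$. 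Reassembling the eigenspace decomposition yields $g=e^{tX}$ on $V$, as desired. No step looks seriously delicate; the one place requiring care is the counting argument in the middle paragraph, which is where thickness (as opposed to merely spanning) is essential.
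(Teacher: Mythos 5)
Your overall strategy is the same as the paper's: reduce to each eigenspace $V_j$ of $X$ (using that $g$ commutes with $X$ and hence preserves $V_j$), show that $g|_{V_j}$ is a scalar, and then use $\lambda_j\neq 0$ to extract a common value of $t$. The paper's version of the key step is a direct coefficient comparison: writing $\pi_j(v_{d_j+1})=\sum a_i \pi_j(v_i)$ with all $a_i\neq 0$ by thickness, the eigenvalue relations force all the eigenvalues of $g|_{V_j}$ on the $\pi_j(v_i)$ to coincide. Your dimension count on the eigenspaces $W_l$ of $g_j$ is a legitimate alternative, but as written it contains a genuine logical slip.

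The slip: the claim ``$|I_l|\leq\dim W_l$ for every $l$'' is false precisely in the situation you are trying to reach, namely $r=1$ with $W_1=V_j$, where $|I_1|=s>d_j=\dim W_1$; and indeed your argument then ``derives'' the contradiction $s\leq d_j$ unconditionally, without ever invoking an assumption $r\geq 2$ --- which cannot be right, since the hypotheses of the lemma are satisfiable. The step that silently breaks is the extension: a linearly dependent set of $\dim W_l+1$ vectors can be enlarged to a linearly dependent $d_j$-subset only when $\dim W_l+1\leq d_j$; if $\dim W_l=d_j$ there is no room to extend, and passing to a proper subset does not preserve linear dependence. The repair is immediate: argue by contradiction assuming $r\geq 2$. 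Then each $W_l$ is a proper subspace of $V_j$ (the $W_l$ are nontrivial, since each $\nu_l$ is attained on some nonzero $\pi_j(v_i)$, and they are in direct sum), so $\dim W_l\leq d_j-1$, the extension step is valid, and the count $s=\sum_l|I_l|\leq\sum_l\dim W_l\leq d_j$ contradicts $s>d_j$, forcing $r=1$. With that correction the remaining steps --- nonvanishing of the projections $\pi_j(v_i)$, recovery of a common $t$ from $\lambda_j\neq 0$, and the fact that the $\pi_j(v_i)$ span $V_j$ --- are all correct.
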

\begin{proof}
Consider first the case $k=1$, so that $X$ is a homothecy. Write
$v_{n+1}=a_1v_1+\dots+a_nv_n$. The thickness condition implies that
$a_i\neq 0$ for every $i$. By assumption we have $gv_i=\lambda_iv_i$ for
some real numbers $\lambda_1,\dots,\lambda_s$. In particular,
$$\lambda_{n+1}(a_1v_1+\dots+a_nv_n)=\lambda_1 a_1v_1+\dots+\lambda_n a_nv_n.$$
Taking into account that $v_1,\dots,v_n$ is a basis and equating coefficients we deduce that
$\lambda_{n+1}=\lambda_1=\dots=\lambda_n$. So the case $k=1$ is proved.
The case $k>1$ follows from applying the previous arguments to each $V_j$,
using the fact that every $g\in G$ preserves $V_j$.
\end{proof}

Let $S(V)$ denote the set of orbits of $H=\{e^{tX}\mid t\in\RR\}$ acting on $V\setminus\{0\}$.
$H$ is a central subgroup of $G$, and the action of $G$ on $V$ induces an action of $G/H$ on $S(V)$.
Let $F\subset S(V)^r$ denote the set of tuples $(x_1,\dots,x_r)$ such that, writing
$x_i=Hx_i'$ with $x_i'\in V$ for each $i$, the vectors $x_1',\dots,x_r'$ form a thick collection
(this is independent of the choice of representatives $x_i'$).

\begin{lemma}
\label{lemma:set-F}
\begin{enumerate}
\item $F$ is a dense an open subset of $S(V)^r$;
\item the restricted action of $G/H$ on $F$ is free.
\end{enumerate}
\end{lemma}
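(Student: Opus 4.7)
The plan is to handle (1) by reducing thickness to a generic polynomial condition on $V^r$ and transporting it to $S(V)^r$ via the open quotient map, and to derive (2) as an immediate corollary of Lemma~\ref{lemma:thick-free}.

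For part (1), the first observation is that thickness descends to $S(V)^r$: since $e^{tX}$ acts on $V_j$ as multiplication by the nonzero scalar $e^{t\lambda_j}$, it preserves linear independence of any collection $\pi_j(v_{i_1}),\ldots,\pi_j(v_{i_{d_j}})$. Hence the set $\tilde F\subset(V\setminus\{0\})^r$ of thick $r$-tuples is $H^r$-invariant and equals the preimage of $F$ under the quotient map $\pi:(V\setminus\{0\})^r\to S(V)^r$. After fixing a basis of each $V_j$, the complement of $\tilde F$ in $V^r$ is the finite union, over all pairs $(j,I)$ with $I\subseteq\{1,\ldots,r\}$ of size $d_j$, of the vanishing loci of the polynomial functions
$$P_{j,I}(v_1,\ldots,v_r)=\det[\pi_j(v_{i_1}),\ldots,\pi_j(v_{i_{d_j}})].$$
Each $P_{j,I}$ is a nonzero polynomial, because $r>n\geq d_j$ allows one to choose $d_j$ of the arguments whose $V_j$-projections form a basis of $V_j$. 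Thus $\tilde F$ is open and dense in $V^r$; since thickness forces every $v_i$ to be nonzero, the same holds in $(V\setminus\{0\})^r$. Finally, $\pi$ is open (being the orbit map of a group action), so $F=\pi(\tilde F)$ is open and dense in $S(V)^r$.

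For part (2), let $x=(x_1,\ldots,x_r)\in F$ and suppose $gH\in G/H$ fixes $x$. Choose representatives $v_i\in V\setminus\{0\}$ with $x_i=Hv_i$, so $(v_1,\ldots,v_r)$ is thick by the definition of $F$. The fixed-point hypothesis translates to $gv_i\in Hv_i$ for every $i$, i.e., $gv_i=e^{t_iX}v_i$ for some $t_i\in\RR$. Lemma~\ref{lemma:thick-free} then yields $g=e^{tX}\in H$ for some $t\in\RR$, whence $gH$ is the identity coset; this shows that $G/H$ acts freely on $F$.

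The only genuinely delicate point is to verify that each $P_{j,I}$ is a nonzero polynomial, which reduces to the inequality $r>\max_j d_j$; this holds since $r\geq 2n+3>n\geq d_j$. Everything else is a routine consequence of standard facts about orbit maps under group actions together with Lemma~\ref{lemma:thick-free}, so I do not anticipate any substantive obstacle.
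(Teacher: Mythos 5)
Your proof is correct and follows essentially the same route as the paper: part (1) identifies the complement of the thick locus in $V^r$ with a finite union of determinantal subvarieties (nonzero since $r>n\geq d_j$) and pushes openness and density down through the quotient map, and part (2) is the direct translation of the fixed-point condition into the hypothesis of Lemma~\ref{lemma:thick-free}. Your write-up simply makes explicit the details (the $H$-invariance of thickness, the polynomials $P_{j,I}$, and the openness of the orbit map) that the paper leaves implicit.
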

\begin{proof}
For (1) note that $r>n$, so the set $F'$ of thick $r$-tuples in $V^r$
can be identified with the complementary of finitely many proper subvarieties
(those corresponding to the possible linear relations among projections to each
summand $V_j$ of subsets of the tuple, given by the vanishing of suitable
determinants). Hence $F'$ is a dense open subset of
$V^r$, which implies that $F\subset S(V)^r$ is open and dense. (2) follows from
Lemma \ref{lemma:thick-free}.
\end{proof}

Assume that $q$ is a sink.
Choose a diffeomorphism $h:V\to W_g^s(q)$ making commutative
the diagram (\ref{eq:accio-linealitzada}) with $p$ replaced by $q$. Then
$h$ induces a diffeomorphism $S(V)\to S_g(q)$, which can be extended linearly
to $S(V)^r\to S_g(q)^r$. Let
$$F_g(q)\subset S_g(q)^r$$
be the image of $F$ under the previous diffeomorphism. The set $F_g(q)$
is independent of the choice of $h$. Indeed, two different choices of
$h$ differ by precomposition with an element of $G$, and the action of
$G$ on $S(V)^r$ preserves $F$. If instead $q$ is a source, consider the
same definition with $W_g^s(q)$ replaced by $W_g^u(q)$.

Lemma \ref{lemma:set-F} and an obvious estimate
imply:

\begin{lemma}
\label{lemma:restriction-aA}
\begin{enumerate}
\item If $z\in F_g(q)$ and $\psi\in L_g(q)$ satisfies $\psi
    z=z$ then $\psi =e^{t\xX_g(q)}$ for some $t\in\RR$.
\item The restriction of $\aA_g(q)$ to $F$ is a vector bundle
    of rank $\dim G-1\leq n^2-1$.
\end{enumerate}
\end{lemma}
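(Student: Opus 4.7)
The plan is to reduce both statements to Lemma \ref{lemma:set-F} via the $L_g(q)$-equivariant identification $S(V)^r \simeq S_g(q)^r$ that is implicit in the definition of $F_g(q)$.

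First, I would unpack that identification. The diffeomorphism $h:V\to W_g^s(q)$ (or $W_g^u(q)$, if $q$ is a source) used to define $F_g(q)$ makes diagram (\ref{eq:accio-linealitzada}) commute, so it intertwines the $L_g(q)$-action on $V$ with the $L_g(q)$-action on $W_g^s(q)$. Passing to orbit spaces under the flow, it descends to a $G$-equivariant diffeomorphism $S(V)\to S_g(q)$ (writing $G=L_g(q)$), and hence to a $G$-equivariant diffeomorphism $S(V)^r\to S_g(q)^r$ that sends $F$ bijectively to $F_g(q)$. Under this identification the central subgroup $H=\{e^{tX}\mid t\in\RR\}\subset G$ is precisely $\{e^{t\xX_g(q)}\mid t\in\RR\}\subset L_g(q)$, which via the isomorphism $D(q)$ of (\ref{eq:Aut-L}) corresponds to the flow subgroup $\{\Phi_t^g\}\subset \Aut(\nabla^gf|_{W_g^s(q)})$.

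For (1), suppose $z\in F_g(q)$ and $\psi\in L_g(q)$ satisfy $\psi z=z$. Translating to $F\subset S(V)^r$, the element $\psi$ fixes a point of $F$, so its image in $G/H$ lies in the stabilizer of that point. By Lemma \ref{lemma:set-F}(2) this stabilizer is trivial, so $\psi\in H$, i.e.\ $\psi=e^{t\xX_g(q)}$ for some $t\in\RR$. For (2), the fiber $\aA_g(q)(z)$ is by definition the image of the infinitesimal action map
\[
\rho_z:\Lie L_g(q)\longrightarrow T_z S_g(q)^r,\qquad \sigma\longmapsto \yY_{g,\sigma}(z),
\]
whose kernel is the Lie algebra of the stabilizer $\Stab_{L_g(q)}(z)$. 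For $z\in F_g(q)$, part (1) (equivalently, Lemma \ref{lemma:set-F}(2)) identifies this stabilizer with the one-dimensional group $H$, so
\[
\dim \aA_g(q)(z)=\dim L_g(q)-1
\]
is constant on $F_g(q)$. Since $(\sigma,z)\mapsto \yY_{g,\sigma}(z)$ is smooth and its image has locally constant rank, the restriction $\aA_g(q)|_{F_g(q)}$ is a smooth vector subbundle of $T(S_g(q)^r)|_{F_g(q)}$ of the stated rank. The obvious estimate is that $L_g(q)=Z(D\nabla^gf(q))\subseteq \Aut(T_qM)$, so $\dim L_g(q)\leq n^2$ and hence $\dim L_g(q)-1\leq n^2-1$.

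There is no substantive obstacle here: the content is entirely in Lemma \ref{lemma:set-F}, and the only thing to verify is that the identification $S(V)^r\simeq S_g(q)^r$ transports both the freeness of the $G/H$-action on $F$ and the one-dimensionality of $H$ to the corresponding statements in $S_g(q)^r$. The mild point worth checking explicitly is that the stabilizer being $H$ (not merely contained in $H$) follows from the trivial fact that $H$ acts trivially on $S(V)$.
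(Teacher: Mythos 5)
Your proof is correct and follows exactly the route the paper intends: the paper derives this lemma directly from Lemma \ref{lemma:set-F} plus the ``obvious estimate'' $\dim L_g(q)\leq\dim\Aut(T_qM)=n^2$, and your write-up simply makes explicit the transport along the $G$-equivariant identification $S(V)^r\simeq S_g(q)^r$, the identification of the stabilizer with $H$ (using that $H$ acts trivially), and the constant-rank argument for the subbundle claim.
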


\section{The space of metrics $\mM_{1,K}$}
\label{s:mM-2-K}

We recall again that $\dim M>1$.

\subsection{Definition of $\mM_{1,K}$}
\label{ss:def-mM-2} Let $g\in\mM_0$. Let $p\in\EE$ and let $K$
be a natural number. Denote by $\|\cdot\|_g$ the operator norm
in $\End T_pM$ induced by $g$. Denote by $$L_{g,K}(p)\subset
L_g(p)$$ the subset consisting of those $\psi\in L_g(p)$ such
that $\|\psi\|_g\leq K$, $\|\psi^{-1}\|_g\leq K$, and
$$\|\psi-e^{t\xX_g(p)}\gamma\|_g\geq K^{-1} \text{ for every
$t\in\RR$ and $\gamma\in\Gamma_p$}.$$ Clearly $L_{g,K}(p)$ is compact.
Recall that the number
$r$ has been defined in (\ref{eq:def-r}) in Subsection
(\ref{ss:singular-distributions}) above. For any $\psi\in
L_g(p)$ we denote by
$$\alpha_\psi:S_g(q)^r\to S_g(q)^r$$
the map given by the action of $\psi$.

\begin{definition}
\label{def:mM_{1,K}} Let $p\in\EE$. Define $\mM_{1,K}(p)$ as
the set of all metrics $g\in\mM_0$ such that for any $\psi\in
L_{g,K}(p)$ there exist:
\begin{enumerate}
\item $q,q'\in\EE$ and $z\in\Omega_g(p,q)^r$ satisfying
    $$\psi z\in\Omega_g(p,q')^r,\qquad \sigma_g^{p,q}z\in F_g(q),
    \qquad \sigma_g^{p,q'}\psi z\in F_g(q'),$$
\item and a vector $u\in\aA_g(q)(\sigma_g^{p,q}z)\subset
    T_{\sigma_g^{p,q}z}S_g(q)^r$ such that
\begin{equation}
\label{eq:xi-no-funciona}
D(\sigma_g^{p,q'}\circ \alpha_\psi\circ \sigma_g^{q,p})(u)\notin
\aA_g(q')(\sigma_g^{p,q'}\circ \alpha_\psi(z)).
\end{equation}
\end{enumerate}
\end{definition}

Here $D(\sigma_g^{p,q'}\circ \alpha_\psi\circ \sigma_g^{q,p})$
is the map between tangent spaces given by the differential of
$$\sigma_g^{p,q'}\circ \alpha_\psi\circ \sigma_g^{q,p}:
\sigma_g^{p,q}(\alpha_{\psi}^{-1}(\Omega_g(p,q')^r))\to \Omega_g(q',p)^r,$$
and $\sigma_g^{p,q}(\alpha_{\psi}^{-1}(\Omega_g(p,q')^r))$ is
an open subset of $\Omega_g(q,p)$ containing $\sigma_g^{p,q}z$.

Define finally:
$$\mM_{1,K}=\bigcap_{p\in\EE}\mM_{1,K}(p).$$

\subsection{$\mM_{1,K}$ is open and dense in $\mM_0$}

\begin{lemma}
\label{lemma:mM-2-K-open}
$\mM_{1,K}(p)$ is an open subset of $\mM_0$.
\end{lemma}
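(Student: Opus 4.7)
The plan is to combine three ingredients: the continuous deformation theory provided by Lemma \ref{lemma:C1-families}, the fact that every condition appearing in Definition \ref{def:mM_{1,K}} is open in the appropriate parameters, and a compactness argument over $\psi\in L_{g,K}(p)$. Fix $g\in\mM_{1,K}(p)$. Applying Lemma \ref{lemma:C1-families} simultaneously for each of the finitely many $q\in\EE$ and intersecting, we obtain a neighborhood $\uU_0\subseteq\mM_0$ of $g$ together with continuous families of linear vector fields $\xX_{g'}(q)$ and $\Gamma_q$-equivariant embeddings $h_{g'}(q):T_qM\to M$ for $g'\in\uU_0$. Since $Z(\xX_{g'}(q))=L_g(q)$ for every such $g'$, these embeddings induce canonical, $g'$-continuous identifications of the spheres $S_{g'}(q)$, of the open subsets $\Omega_{g'}(p,q)$, of the subsets $F_{g'}(q)$, of the distributions $\aA_{g'}(q)|_{F_{g'}(q)}$ (which are genuine vector subbundles by Lemma \ref{lemma:restriction-aA}(2)), and of the gluing diffeomorphisms $\sigma_{g'}^{p,q}$. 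Under these identifications, each of the conditions $\psi z\in\Omega_{g'}(p,q')^r$, $\sigma_{g'}^{p,q}z\in F_{g'}(q)$, $\sigma_{g'}^{p,q'}\psi z\in F_{g'}(q')$, and the non-inclusion (\ref{eq:xi-no-funciona}), defines an open subset of the space of tuples $(g',\psi,z,u)$.

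I would then exploit that $L_{g,K}(p)$ is compact and that $g'\mapsto L_{g',K}(p)$ is upper semicontinuous: if $g_n\to g$ and $\psi_n\in L_{g_n,K}(p)$, then by continuity of $\|\cdot\|_{g'}$ and of $\xX_{g'}(p)$, together with the closedness of the conditions defining $L_{g,K}(p)$ and the finiteness of $\Gamma_p$, any limit of $\{\psi_n\}$ lies in $L_{g,K}(p)$. Hence for every open neighborhood $W$ of $L_{g,K}(p)$ in $L_g(p)$ there is a neighborhood $\uU_1\subseteq\uU_0$ of $g$ with $L_{g',K}(p)\subseteq W$ for all $g'\in\uU_1$. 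For each $\psi_0\in L_{g,K}(p)$ fix witnesses $q_0,q_0',z_0,u_0$ provided by $g\in\mM_{1,K}(p)$; by the openness observations of the previous paragraph together with the continuous dependence of $\alpha_\psi$ on $\psi$, the same witnesses, transported via the identifications of paragraph one, continue to verify Definition \ref{def:mM_{1,K}} for every pair $(g',\psi)$ with $g'$ in some neighborhood $\uU_{\psi_0}\subseteq\uU_0$ of $g$ and $\psi$ in some open neighborhood $V_{\psi_0}\ni\psi_0$ inside $L_g(p)$.

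To conclude I would extract a finite subcover $V_{\psi_1},\dots,V_{\psi_N}$ of $L_{g,K}(p)$, set $W=\bigcup_i V_{\psi_i}$, apply the upper semicontinuity to this $W$ to obtain $\uU_1$, and put $\uU=\uU_1\cap\bigcap_i\uU_{\psi_i}$. For any $g'\in\uU$ and any $\psi\in L_{g',K}(p)\subseteq W$, pick $i$ with $\psi\in V_{\psi_i}$; the transported witnesses $(q_i,q_i',z_i,u_i)$ then verify Definition \ref{def:mM_{1,K}} for $(g',\psi)$, so $g'\in\mM_{1,K}(p)$, showing $\uU\subseteq\mM_{1,K}(p)$. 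The principal obstacle is the one addressed in paragraph one: one must carefully verify that the deformation theory of Lemma \ref{lemma:C1-families}, which only directly controls (un)stable manifolds and their linear normal form at sinks and sources, does produce a simultaneous continuous variation of the derived objects $S_g(q)$, $F_g(q)$, $\aA_g(q)$ and $\sigma_g^{p,q}$, so that all four conditions of Definition \ref{def:mM_{1,K}} become open in $(g',\psi,z,u)$ at once; once this is established, the cover and upper-semicontinuity argument is routine.
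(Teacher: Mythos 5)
Your proof is correct and follows essentially the same route as the paper's: compactness of $L_{g,K}(p)$, openness of the ``witness'' conditions in Definition \ref{def:mM_{1,K}}, the identifications supplied by Lemma \ref{lemma:C1-families}, and upper semicontinuity of $g'\mapsto L_{g',K}(p)$ are combined in the same way. The only cosmetic difference is that where you transport the witness vector $u$ via a continuous identification of the bundles $\aA_{g'}(q)|_{F_{g}(q)}$, the paper achieves the same effect by replacing $u$ with its $g'$-orthogonal projection onto the perturbed distribution.
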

\begin{proof}
Let $g\in\mM_{1,K}(p)$. If $\psi\in L_{g,K}(p)\subset L_g(p)$
and $(q,q',z,u)$ satisfy (1) and (2) in Definition
\ref{def:mM_{1,K}}, then we say that $(q,q',z,u)$ {\it rules
out} $\psi$. The set of elements in $L_g(p)$ which are ruled
out by any given tuple $(q,q',z,u)$ is open. Since $L_{g,K}(p)$
is compact, it follows that there exist finitely many tuples
$(q_1,q'_1,z_1,u_1),\dots,(q_{\nu},q'_{\nu},z_{\nu},u_{\nu})$
and open subsets $V_1,\dots,V_{\nu}\subset L_g(p)$ such that
$L_{g,K}(p)\subset V_1\cup\dots\cup V_{\nu}$ and such that, for
every $j$, $(q_j,q'_j,z_j,u_j)$ rules out each element of
$V_j$. Choose subsets $V_j'\subset V_j$ with the property that
$L_{g,K}(p)\subset V_1'\cup\dots\cup V_{\nu}'$, and such that
$\ov{V_j'}$ is compact and contained in $V_j$ for each $j$.

Applying Lemma \ref{lemma:C1-families} to $g$ we deduce the
existence of a neighborhood $\uU\subset\mM_0$ of $g$ and
natural smooth identifications $S_g(q)\simeq S_{g'}(q)$ for
every $g'\in\uU$ and $q\in\EE$. Since in the remainder of the
proof we only consider metrics from $\uU$, we denote $S(q)$
instead of $S_{g'}(q)$. We also get for every $g'\in\uU$
natural isomorphisms of groups $L_g(q)\simeq L_{g'}(q)$ which
are compatible with both inclusions of $\Gamma_z$ in $L_g(q)$
and $L_{g'}(q)$ and with the identifications $S_{g}(q)\simeq
S_{g'}(q)$, and for this reason we write $L(q)$ instead of
$L_{g'}(q)$. Now we may view $V_1,\dots,V_{\nu}$ as subsets of
$L(p)$. Shrinking $\uU$ if necessary we may assume that
$L_{g',K}(p)\subset V_1'\cup\dots\cup V_{\nu}'$ for every
$g'\in\uU$.

The sets $F_{g'}(q)\subset S(q)^r$ are independent of $g'$ and
the distributions $\aA_{g'}(q)$ vary continuously with $g'$.
Similarly the subsets $\Omega_{g'}(p,q)\subset S(p)$ vary
continuously with $g'$, meaning that, for any
$z\in\Omega_g(p,q)$, if $g'$ is sufficiently close to $g$ then
$z\in\Omega_{g'}(p,q)$ as well. The maps
$\sigma_{g'}^{p,q}:\Omega_{g'}(p,q)\to\Omega_{g'}(q,p)$ also
depend continuously on $g'$ in the obvious sense.

For any $g'\in\uU$, any $q\in\EE$, and any $z\in F(q)$ we
define $P_{g'}:T_zS(q)^r\to\aA_{g'}(q)(z)$ to be the orthogonal
projection with respect to $g'$ (recall that $\aA_{g'}(q)(z)$
is a vector subspace of $T_zS(q)^r$).
The previous observations imply the following: for every $1\leq
j\leq \nu$ there exists a neighborhood of $g$,
$\uU_j\subset\uU$, such that for every $g'\in \uU_j$ and any
$\psi'\in \overline{V_j'}$, the tuple
$(q_j,q_j',z_j,P_{g'}(u_j))$ rules out $\psi'$. It then follows
that $\uU_1\cap\dots\cap\uU_{\nu}\subset\mM_{1,K}(p)$, and
hence $\mM_{1,K}(p)$ is open.
\end{proof}

\begin{lemma}
\label{lemma:mM-2-K-dense}
$\mM_{1,K}(p)$ is a dense subset of $\mM_0$.
\end{lemma}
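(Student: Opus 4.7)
The plan is, given $g \in \mM_0$ and a neighborhood $\uU$ of $g$ in $\mM_0$, to produce $g' \in \uU \cap \mM_{1,K}(p)$; I assume $p \in \II$ since the source case is symmetric. The set $L_{g,K}(p)$ is compact, and the argument used in the proof of Lemma \ref{lemma:mM-2-K-open} shows that being ruled out by a fixed tuple $(q,q',z,u)$ is an open condition both in $\psi \in L_g(p)$ and in $g \in \mM_0$. Hence it suffices to establish the local statement: for every $\psi_0 \in L_{g,K}(p)$ and every neighborhood $\uU$ of $g$, there exist $g' \in \uU \cap \mM_0$ and a tuple $(q,q',z,u)$ ruling out every $\psi$ in some neighborhood of $\psi_0$ in $L_{g',K}(p)$. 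Cover $L_{g,K}(p)$ by finitely many such neighborhoods and combine the resulting finitely many perturbations one after another, shrinking $\uU$ at each step.

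\textbf{Step 2: choice of preliminary data.} Because $M = \bigsqcup_{c \in \Crit(f)} W_g^u(c)$ and the top-dimensional strata of this partition are exactly the unstable manifolds of sources, the set $\bigcup_{q \in \OO} W_g^u(q)$ is open and dense in $M$, so $\bigcup_{q \in \OO} \Omega_g(p,q)$ is open and dense in $S_g(p)$. Together with the open density of $F_g(q) \subset S_g(q)^r$ from Lemma \ref{lemma:set-F}, this lets me choose sources $q,q' \in \OO$ and $z \in \Omega_g(p,q)^r$ satisfying $\sigma_g^{p,q}z \in F_g(q)$, $\psi_0 z \in \Omega_g(p,q')^r$, and $\sigma_g^{p,q'}\psi_0 z \in F_g(q')$. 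Each of these is an open condition in $(z,\psi_0,g)$, so it persists for $\psi$ near $\psi_0$ and for $g'$ near $g$, which is what the reduction of Step 1 demands.

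\textbf{Step 3: transversality via metric perturbation.} The heart of the argument is to find $g' \in \uU$ and a vector $u \in \aA_{g'}(q)(\sigma_{g'}^{p,q}z)$ such that $D(\sigma_{g'}^{p,q'}\circ\alpha_\psi\circ\sigma_{g'}^{q,p})(u) \notin \aA_{g'}(q')(\sigma_{g'}^{p,q'}\alpha_\psi z)$. By Lemma \ref{lemma:restriction-aA}, both fibers have dimension at most $n^2-1$, while the ambient tangent space has dimension $r(n-1) \geq 2n^2$ by the choice of $r$ in (\ref{eq:def-r}); hence the containment $D\alpha \cdot \aA_{g'}(q) \subseteq \aA_{g'}(q')$ is a very high-codimension condition. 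To break it I will perturb $g$ inside a small tubular neighborhood of a single gradient flow line in $W_g^s(p) \cap W_g^u(q)$ whose projection under $\pi_q$ is a component of $\sigma_g^{p,q}z$, arranging the perturbation to be disjoint from the flow lines underlying $\sigma_g^{p,q'}\psi_0 z$. By Lemma \ref{lemma:C1-families}, such a perturbation modifies $h_{g'}(q) : T_qM \to W_{g'}^u(q)$, and therefore $\aA_{g'}(q)$ at $\sigma_{g'}^{p,q}z$, while leaving $h_{g'}(q')$ and $\aA_{g'}(q')$ at $\sigma_{g'}^{p,q'}\alpha_\psi z$ essentially fixed. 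This yields independent control of the two distributions, and for a generic perturbation within a suitable finite-dimensional family the desired $u$ exists.

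\textbf{Main obstacle.} The technical heart is Step 3: one must verify quantitatively that metric perturbations localized along a single flow line realize a rich enough family of deformations of $\aA_{g'}(q)(\sigma_{g'}^{p,q}z)$ to break the rigid containment, while leaving the target distribution $\aA_{g'}(q')(\sigma_{g'}^{p,q'}\alpha_\psi z)$ essentially unchanged. The large multiplicity $r$ prescribed by (\ref{eq:def-r}) and the technical result of the first appendix on the $g$-dependence of the gradient flow are expected to be exactly what makes such a transversality argument go through.
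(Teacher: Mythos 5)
Your outline assembles the right ingredients, but the step you yourself label the ``technical heart'' (Step 3) is the entire content of the lemma, and both the mechanism you propose there and the reduction in Step 1 are flawed. On the mechanism: the admissible $\Gamma$-invariant perturbations are supported in $f^{-1}((a,b))\cap W^s_g(p)$ for a regular interval $(a,b)$, hence compactly away from \emph{all} critical points; such a perturbation does \emph{not} modify $h_{g'}(q)$ near $q$, nor $S_g(q)$, $F_g(q)$, $\aA_g(q)$ or $\aA_g(q')$, all of which remain canonically identified with their unperturbed versions. What it moves is the derivative of the holonomy $\sigma_{g'}^{q,p}$ along the flow lines crossing the support, and this is exactly what Lemma \ref{lemma:variacions-metrica-diff} controls: choosing $r$ points $z_{\psi i}$ with trivial stabilizers in a small ball $\overline{B}_{S,\epsilon/2}(c)$ separated from $\psi\overline{B}_{S,\epsilon/2}(c)$ and from all its $\Gamma_p$-translates (possible precisely because $\psi\in L_{g,K}(p)$ is $K^{-1}$-far from $e^{t\xX_g(p)}\Gamma_p$), one builds a finite-dimensional space $G_\psi$ of invariant perturbations for which the map (\ref{eq:surjective-map}) is onto $\bigoplus_i T_{v_i}(TM)$; the image vector $D(\sigma_{g'}^{p,q'}\circ\alpha_\psi\circ\sigma_{g'}^{q,p})(u)$ then moves freely while the target distribution at $\sigma^{p,q'}\psi z$ stays put. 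A tubular neighborhood of an entire flow line, as you propose, meets every neighborhood of $p$ and of $q$ and is not an admissible support. You also never justify that the support can be made $\Gamma$-invariant yet disjoint from the flow lines over $\psi z$, which is where the lower bound $\|\psi-e^{t\xX_g(p)}\gamma\|_g\geq K^{-1}$ is actually used.

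Second, your Step 1 does not close up. For fixed $(q,q',z,u)$ and fixed $g'$ the set of $\psi$ ruled out is open, but its size depends on $g'$, and your $g'$ depends on $\psi_0$; so ``cover $L_{g,K}(p)$ by finitely many such neighborhoods'' is circular, the neighborhoods being known only after the perturbations they are supposed to index. The correct reduction fixes in advance a finite cover of $L_{g,K}(p)$ by sets $\oO'_{\psi_i}$ with compact closure on which only the soft conditions (1) of Definition \ref{def:mM_{1,K}} are guaranteed, and then proves that for a residual set of perturbations condition (2) holds \emph{simultaneously for all} $\psi'\in\overline{\oO'_{\psi_i}}$. That is a parametric transversality statement with $\psi'$ as one of the variables, and it is why the dimension count must absorb $\dim L_g(p)\leq n^2$ in addition to the fibre dimension $n^2-1$ of $\aA(q')$: the computation ends with $2n^2-r(n-1)-1<0$, which is what forces the choice (\ref{eq:def-r}). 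Your heuristic comparing $r(n-1)$ with $n^2-1$ accounts for only one of the two $n^2$'s and would suggest roughly half the required value of $r$; without the Sard argument over $\MMM\times\oO'_{\psi_i}$ the proof does not go through.
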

\begin{proof}
We will use the following lemma, whose proof
is postponed to the Appendix.

\begin{lemma}
\label{lemma:variacions-metrica-diff} Suppose that
$g\in\mM^{\Gamma}$, $x\in M\setminus\Crit(f)$ and
$y=\Phi^g_t(x)$ for some nonzero $t$. Suppose that the
stabilizer $\Gamma_x$ is trivial. Let $v\in T_xM$ be a nonzero
vector, and let $w=D\Phi^g_t(x)(v)$. Given any $u\in T_v(TM)$
and any $\Gamma$-invariant open subset $U\subset M$ containing
$\Phi^g_{t'}(x)$ for some $t'\in (0,t)$, there exists some
$g'\in\cC^{\infty}(M,S^2T^*M)^{\Gamma}$ supported on $U$ such
that
$$\left.\frac{\partial}{\partial\epsilon}D\Phi^{g+\epsilon g'}_{-t}(w)\right|_{\epsilon=0}=u.$$
\end{lemma}

Fix some $g\in\mM_0$. We assume for concreteness throughout the
proof that $p$ is a sink. The case in which $p$ is a source
follows from the same arguments (or replacing $f$ by $-f$).

We claim that the set of points in $S_g(p)$ with trivial
stabilizer in $\Gamma$ is open and dense. Indeed, on the one hand the points in
$W_g^s(p)$ with trivial stabilizer form an open and dense
subset, thanks to (2) in Lemma \ref{lemma:linearisation} and
the openness of $W_g^s(p)\subset M$ and on the other hand the
stabilizer of any $z\in W_g^s(p)$ is equal to the stabilizer of
the point in $S_g(p)$ it represents, because the action of
$\Gamma$ preserves $f$ and the restriction of $f$ to the fibers
of the projection $W_g^s(p)\setminus\{p\}\to S_g(p)$ is
injective.

Let $\psi\in L_{g,K}(p)$. Since $\psi$ does not belong to
$\Gamma_p\subset L_g(p)$ and since the union of the sets
$\{\Omega(p,q)\}_{q\in\OO}$ is an open and dense subset of
$S_g(p)$, there exist $q,q'\in\OO$ (not necessarily
distinct) and a point $c\in \Omega(p,q)$ satisfying $\psi
c\in\Omega(p,q')$ and $\psi c\neq \gamma c$ for every
$\gamma\in\Gamma_p$.
By the previous claim, we can also assume that the stabilizer of $c$ is trivial.

Choose a metric on the sphere $S_g(p)$. For any $y\in S_g(p)$ and
any $r>0$ denote by $\ov{B}_{S,r}(y)$ the closed ball in $S_g(p)$ of radius $r$ and center
$y$. Choose $\epsilon>0$ in such a way
that $\gamma \ov{B}_{S,\epsilon}(c) \cap \psi
\ov{B}_{S,\epsilon}(c)=\emptyset$ for every $\gamma\in\Gamma_p
$, and in such a way that $\ov{B}_{S,\epsilon}(c)\subset
\Omega_g(p,q)$ and $\psi \ov{B}_{S,\epsilon}(c)\subset
\Omega_g(p,q')$. Replacing $\epsilon$ by a smaller number if
necessary we can assume that the stabilizers of the points in
$\ov{B}_{S,\epsilon}(c)$ are all trivial.
Take $r$ distinct points $z_{\psi 1},\dots,z_{\psi
r}\in\ov{B}_{S,\epsilon/2}(c)$ and tangent vectors $u_{\psi
i}\in T_{\sigma_g^{p,q}(z_{\psi i})}S_g(q)$ for $i=1,\dots,r$.
Letting $z_{\psi}=(z_{\psi 1},\dots,z_{\psi r})$, we may assume
that $\sigma_g^{p,q}(z_{\psi})\in F_g(q)$ and
$\sigma_g^{p,q'}(\psi z_{\psi})\in F_g(q')$ because $F_g(q)$
(resp. $F_g(q')$) is dense in $S_g^r(q)$ (resp. $S_g^r(q')$).

Let $\oO_{\psi}$ be the set of all elements $\psi'\in L_g(p)$ satisfying
$\gamma \ov{B}_{S,\epsilon/2}(c)\cap
\psi'\ov{B}_{S,\epsilon/2}(c)=\emptyset$ for every
$\gamma\in\Gamma_p$ and $\sigma_g^{p,q'}(\psi' z_{\psi})\in
F_g(q')$.
%
Clearly $\oO_{\psi}\subset L_g(p)$ is open and contains $\psi$.


Denote the open ball in $M$ with center $x$ and radius $\delta$
by $B_{\delta}(x)$. Take real numbers $a<b<f(p)$ in such a way
that $[a,f(p))$ does not contain any critical value of $f$.
Take $\delta>0$ small enough so that $B_{\delta}(p)$ (resp.
$B_{\delta}(q)$, $B_{\delta}(q')$) is entirely contained in
$W_g^s(p)$ (resp. $W_g^u(q)$ and $W_g^u(q')$) and $\inf
f|_{B_{\delta}(p)}>b$ (resp. $\sup f|_{B_{\delta}(q)}<a$ and
$\sup f|_{B_{\delta}(q')}<a$).

Pick, for each $1\leq i\leq r$, points $x_i\in
B_{\delta}(p)\setminus\{p\}$ and $y_i\in
B_{\delta}(q)\setminus\{q\}$ both representing
$z_{\psi i}\in S_g(p)$, 
and a tangent vector $v_i\in T_{x_i}M$ projecting to $u_{\psi
i}\in T_{z_i}S_g(p)$. Define real numbers $t_1,\dots,t_r$ by
the condition that $y_i=\Phi_{t_i}^g(x_i)$, and let
$w_i=D\Phi_{t_i}^g(v_i)$.

Let $U\subset M$ be an open $\Gamma$-invariant subset contained
in $f^{-1}((a,b))\cap W_g^s(p)$ whose projection to $S_g(p)$
contains $z_{\psi 1},\dots,z_{\psi r}$ and is disjoint from
$\psi(\ov{B}_{S,\epsilon/2}(c))$. By Lemma
\ref{lemma:variacions-metrica-diff} one can pick a finite
dimensional vector subspace
$$G_{\psi}\subset\cC^{\infty}(M,S^2T^*M)^{\Gamma},$$
all of whose elements are supported in $U$, with the property that the linear map
\begin{equation}
\label{eq:surjective-map}
G_{\psi}\ni g'\mapsto
\left(\left.\frac{\partial}{\partial\epsilon}D\Phi^{g+\epsilon g'}_{-t_1}(w_1)\right|_{\epsilon=0},\dots,
\left.\frac{\partial}{\partial\epsilon}D\Phi^{g+\epsilon g'}_{-t_r}(w_r)\right|_{\epsilon=0}\right)
\in \bigoplus_{i=1}^r T_{v_i}(TM)
\end{equation}
is surjective.

Choose an open neighborhood $\oO_{\psi}'\subset\oO_{\psi}$ of $\psi$
whose closure in $\oO_{\psi}$ is compact.
Since $L_{g,K}(p)$ is compact there exist
$\psi_1,\dots,\psi_s\in L_{g,K}(p)$ such that
$L_{g,K}(p)\subset\oO'_{\psi_1}\cup\dots\cup\oO'_{\psi_s}$.
Denote $z_i=z_{\psi_i}\in S_g(p)^r$
and $u_i=u_{\psi_i}\in T(S_g(p)^r)$.

Let $G=\sum_iG_{\psi_i}$.
Let $\MMM$ be the set of all $g'\in\mM^{\Gamma}$ satisfying the following conditions:
\begin{enumerate}
\item $g'-g\in G$,
\item $\sigma_{g'}^{p,q}(z_i)\in F_{g'}(q)=F_g(q)$ for
    every $i$,
\item $\sigma_{g'}^{p,q'}(\psi z_i)\in F_{g'}(q')
    =F_{g}(q')$ for every $i$ and every $\psi\in
    \ov{\oO_{\psi_i}'}$.
\end{enumerate}
To explain conditions (2) and (3), note that since
$g'-g\in\sum_iG_{\psi_i}$ and the elements in each $G_{\psi_i}$
are supported away from the critical points, we can canonically
identify $S_g(q)=S_{g'}(q)$ and $S_g(q')=S_{g'}(q')$, and
similarly $F_g(q)=F_{g'}(q)$ and $F_g(q')=F_{g'}(q')$.

Note that $\{g'-g\mid g'\in\MMM\}$ can be identified with an
open subset of $G$ containing $0$, so $\MMM$ has a natural
structure of (finite dimensional) smooth manifold.


Consider, for each $i\in\{1,\dots,s\}$,
$$\VV_i=\{(g',\psi',b)\in \MMM\times \oO_{\psi_i}\times T(S_g(q')^r)\mid b=
D(\sigma_{g'}^{p,q'}\circ \alpha_{\psi'}\circ \sigma_{g'}^{q,p})(u_i)\}$$
and its subvariety
$$\VV_i'=\{(g',\psi',b)\in \MMM\times \oO'_{\psi_i}\times T(S_g(q')^r)\mid b=
D(\sigma_{g'}^{p,q'}\circ \alpha_{\psi'}\circ \sigma_{g'}^{q,p})(u_i)\}.$$
Let also
$$\AA=\MMM\times L_g(p)\times \aA_{g}(q')|_{F_g(q')}.$$
Note that $\VV_i,\,\VV_i',\,\AA$ are subvarieties of $\MMM\times L_g(p)\times T(S_g(q')^r)$.

Let $N_i=\VV_i\cap\AA\cap(\{g\}\times \oO_{\psi_i}\times T(S_g(p)^r)$.
The definition of $G_{\psi_i}$ guarantees that
$\VV_i$ and $\AA$ intersect transversely along $N_i$.
Consequently, there exists a neighborhood of $N_i$,
$$\nN_i\subset \MMM\times \oO_{\psi_i}\times T(S_g(q')^r),$$
such that
the intersection $\VV_i\cap\AA\cap \nN_i$ is a smooth manifold whose dimension satisfies
$$d-\dim(\VV_i\cap\AA\cap \nN_i)=\min\{d+1,(d-\dim\VV_i)+(d-\dim\AA)\},$$
where
$$d=\dim \MMM\times L_p(g)\times T(S_g(q')^r).$$
This formula is consistent with
the convention that a set is empty if and only if its dimension is $-1$.
Consider the projection
$$\pi_i:\VV_i\cap\AA\to \MMM.$$
Since the closure of $\VV_i'$ inside $\VV_i$ is compact, there exists a neighborhood of $g$, $\MMM_i\subset\MMM$, with the property that
$\pi_i^{-1}(\MMM_i)\cap\VV_i'\subset\nN_i$.
Hence,
$\pi_i^{-1}(\MMM_i)\cap\VV_i'$ is a smooth manifold. Let
$$\MMM_i^{\reg}\subset\MMM_i$$
be the set of regular values of $\pi_i$ restricted to $\pi_i^{-1}(\MMM_i)$.

We claim that for every $g'\in\MMM_i^{\reg}$ we have
$\pi_i^{-1}(g')\cap\VV_i'=\emptyset.$
To prove the claim it suffices to check that $\dim \pi_i^{-1}(g')\cap\VV_i'=-1$. Now,
\begin{align*}
\dim \pi_i^{-1}(g')\cap\VV_i' &= \dim(\VV_i\cap\AA\cap \nN_i) - \dim\MMM \\
&=d-\min\{d+1,(d-\dim\VV_i)+(d-\dim\AA)\}- \dim\MMM.
\end{align*}
If $(d-\dim\VV_i)+(d-\dim\AA)\geq d+1$ then this is clearly negative. So assume that instead
$(d-\dim\VV_i)+(d-\dim\AA)<d+1$. Since the projection of $\VV_i$ to $\MMM\times\oO_{\psi_i}$
is a diffeomorphism, we have
$$d-\dim\VV_i=d-(\dim \MMM\times\oO_{\psi_i})=d-(\dim\MMM\times L_g(p))=\dim T(S_g(q')^r)=2r(n-1).$$
On the other hand we have, using (2) in Lemma \ref{lemma:restriction-aA},
\begin{align*}
d-\dim\AA &= \dim T(S_g(q')^r)-\dim\aA_g(q')|_{F_g(q')} \\
&\geq 2r(n-1)-(r(n-1)+n^2-1)=r(n-1)-n^2+1.
\end{align*}
Combining both estimates we compute:
\begin{align*}
\dim \pi_i^{-1}(g')\cap\VV_i' &\leq d-2r(n-1)-r(n-1)+n^2-1-\dim\MMM \\
&=\dim L_p(g)\times T(S_g(q')^r)-3r(n-1)+n^2-1 \\
&\leq n^2+2r(n-1)-3r(n-1)+n^2-1 \\
&=2n^2-r(n-1)-1.
\end{align*}
Our choice of $r$, see (\ref{eq:def-r}),
implies that $2n^2-r(n-1)-1<0$, so the claim is proved.

Finally, let
$$\MMM^{\reg}=\MMM_1^{\reg}\cap\dots\cap\MMM_s^{\reg}.$$
We claim that $\MMM^{\reg}\subset \mM_{1,K}(p)$. Indeed,
suppose that $g'\in\MMM^{\reg}$ and let $\psi\in L_{g,K}(p)$ be
any element. Then $\psi\in\oO'_{\psi_i}$ for some $i$ and we
have, on the one hand,
$$z_i\in S_{g'}(p,q)^r,\quad \psi z_i\in S_{g'}(p,q')^r, \quad
\sigma_{g'}^{p,q}(z_i)\in F_{g'}(q),\quad
\sigma_{g'}^{p,q'}(\psi z_i)\in F_{g'}(q'),$$ and, on the other hand,
 the fact that
$\pi_i^{-1}(g')\cap\VV_i'=\emptyset$ implies that
$$D(\sigma_{g'}^{p,q'}\circ \alpha_{\psi}\circ
\sigma_{g'}^{q,p})(u_i)\notin\aA_{g'}(q')(\sigma_{g'}^{p,q'}\circ
\alpha_{\psi'}(z_i)).$$ This proves the claim.

Sard's theorem (see e.g. \cite[Chap 3, \S 1.3]{H}) implies that
$\MMM^{\reg}$ is residual in $\MMM$. Hence $\MMM^{\reg}$ is
dense in a neighborhood of $g\in\MMM$, so $\mM_{1,K}$ is dense
in a neighborhood of $g$.
\end{proof}

Recall that $\mM_{1,K}=\bigcap_{p\in\EE}\mM_{1,K}(p)$. The
preceding two lemmas imply:

\begin{lemma}
\label{lemma:mM-2-K-open-dense}
$\mM_{1,K}$ is a dense an open subset of $\mM_0$.
\end{lemma}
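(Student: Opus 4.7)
The plan is to observe that this lemma follows almost formally from the two preceding lemmas (\ref{lemma:mM-2-K-open} and \ref{lemma:mM-2-K-dense}), once one notes that the set $\EE$ is finite. There is essentially no new content to prove beyond this packaging step, so the argument should be very short.

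First I would note that, since $M$ is compact and $f$ is a Morse function, $\Crit(f)$ is finite; hence $\EE=\II\cup\OO\subset\Crit(f)$ is also finite. Consequently $\mM_{1,K}=\bigcap_{p\in\EE}\mM_{1,K}(p)$ is a \emph{finite} intersection of subsets of $\mM_0$.

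Next I would invoke Lemma \ref{lemma:mM-2-K-open} to conclude that each $\mM_{1,K}(p)$ is open in $\mM_0$, so the finite intersection $\mM_{1,K}$ is open in $\mM_0$ as well. For density, Lemma \ref{lemma:mM-2-K-dense} gives that each $\mM_{1,K}(p)$ is dense in $\mM_0$. I would then use the elementary fact that a finite intersection of open and dense subsets of an arbitrary topological space is open and dense: given any nonempty open $U\subset\mM_0$, density of $\mM_{1,K}(p_1)$ and openness of $U$ imply that $U\cap\mM_{1,K}(p_1)$ is nonempty and open; iterating over the finitely many $p\in\EE$, all intermediate sets remain nonempty and open, so $U\cap\mM_{1,K}$ is nonempty. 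No appeal to the Baire property of $\mM^{\Gamma}$ is needed at this stage (that will only be invoked later when the countable intersection $\bigcap_{K\in\NN}\mM_{1,K}$ is formed in the definition of $\mM_f$).

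The only conceivable subtlety, and the one I would double-check, is the finiteness of $\EE$; but this is immediate from compactness of $M$ and the Morse hypothesis. Thus there is no real obstacle, and the proof consists exactly of these two observations: finiteness of $\EE$ plus the two preceding lemmas.
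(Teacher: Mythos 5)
Your proposal is correct and coincides with the paper's argument: the author likewise obtains the lemma directly from Lemmas \ref{lemma:mM-2-K-open} and \ref{lemma:mM-2-K-dense}, using that $\mM_{1,K}$ is a finite intersection of the open dense sets $\mM_{1,K}(p)$ over the finite set $\EE\subset\Crit(f)$. Your explicit remark that finiteness of $\EE$ (and hence no Baire argument) is what makes the intersection open as well as dense is exactly the implicit content of the paper's one-line deduction.
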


\section{Proof of Theorem \ref{thm:main} for $\dim M>1$}
\label{s:proof-thm:main}

Continuing with the notation of the previous sections, let us define
$$\mM_f=\mM_0\cap\bigcap_{K\in\NN}\mM_{1,K}.$$
Since each of the sets appearing in the right hand
side of the equality is open and dense in $\mM^{\Gamma}$  (see Subsection \ref{ss:mM-0} and Lemma \ref{lemma:mM-2-K-open-dense}), $\mM_f$ is a residual subset of
$\mM^{\Gamma}$.
Fix some $g\in\mM_f$ and let $\phi\in\Aut(\nabla^gf)$. We are going to check that there exists
some $\gamma\in\Gamma$ and some $t\in\RR$ such that
$$\phi(x)=\Phi^g_t(\gamma\,x)$$
for every $x\in M$. This will prove Theorem \ref{thm:main}.


\begin{lemma}
\label{lemma:main-M-0} For each $p\in\II$ (resp. $p\in\OO$)
there exists some $\gamma\in\Gamma$ and some $t\in\RR$
such that $\phi(x)=\Phi^g_t(\gamma\,x)$ for every $x\in W_g^s(p)$
(resp. for every $x\in W_g^u(p)$).
\end{lemma}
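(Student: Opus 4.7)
The plan is to prove the statement for a sink $p\in\II$; the source case is symmetric (e.g.\ by replacing $f$ by $-f$). Condition (C2) gives $\phi(p)\in\Gamma p$, so after composing $\phi$ with $\gamma^{-1}\in\Gamma$ for a suitable $\gamma$ (still in $\Aut(\nabla^gf)$ since $f,g$ are $\Gamma$-invariant) I may assume $\phi(p)=p$. Then $\phi$ preserves $W_g^s(p)$ and $\phi|_{W_g^s(p)}\in\Aut(\nabla^gf|_{W_g^s(p)})$ corresponds via (\ref{eq:Aut-L}) to $\psi:=D\phi(p)\in L_g(p)$. The whole task reduces to proving that
$$\psi\in H_p:=\{e^{t\xX_g(p)}\gamma'\mid t\in\RR,\ \gamma'\in\Gamma_p\},$$
for then $\phi|_{W_g^s(p)}$ and $\Phi_t^g\circ\gamma'$ have the same derivative $\psi$ at $p$ and hence coincide on all of $W_g^s(p)$; reversing the normalisation and using that $\Gamma$ commutes with the flow delivers the desired formula. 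Since $\xX_g(p)$ is $g$-self-adjoint with real negative spectrum, $\{e^{t\xX_g(p)}\}$ is properly embedded in $L_g(p)$, so $H_p$ (a finite union of $\Gamma_p$-translates) is closed in $L_g(p)$.

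I argue by contradiction. If $\psi\notin H_p$, then closedness of $H_p$ and boundedness of $\psi,\psi^{-1}$ give some $K\in\NN$ with $\psi\in L_{g,K}(p)$. Since $g\in\mM_{1,K}\subseteq\mM_{1,K}(p)$, Definition \ref{def:mM_{1,K}} furnishes $q,q'\in\EE$, $z\in\Omega_g(p,q)^r$ and $u\in\aA_g(q)(\sigma_g^{p,q}z)$ such that $\sigma_g^{p,q}z\in F_g(q)$, $\sigma_g^{p,q'}\psi z\in F_g(q')$, and
$$D(\sigma_g^{p,q'}\circ\alpha_\psi\circ\sigma_g^{q,p})(u)\notin\aA_g(q')(\sigma_g^{p,q'}\alpha_\psi(z)).$$

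I will contradict this inequality by identifying the map $\sigma_g^{p,q'}\circ\alpha_\psi\circ\sigma_g^{q,p}$ dynamically. Since $\phi(p)=p$, the map induced by $\phi$ on $S_g(p)$ is $\alpha_\psi$; moreover $\phi$ permutes unstable manifolds of sources ($\phi(W_g^u(q''))=W_g^u(\phi(q''))$), and sends the orbit of each $z_i\in\Omega_g(p,q)$ to that of $\alpha_\psi(z_i)\in\Omega_g(p,q')$, forcing $\phi(q)=q'$. Thus $\phi_q:=\phi|_{W_g^u(q)}:W_g^u(q)\to W_g^u(q')$ is a diffeomorphism intertwining the two restrictions of $\nabla^gf$, and it induces a diffeomorphism $\beta:S_g(q)\to S_g(q')$ equal to $\sigma_g^{p,q'}\circ\alpha_\psi\circ\sigma_g^{q,p}$ on $\Omega_g(q,p)$. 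Conjugation by $\phi_q$ is a $\beta$-equivariant group isomorphism $\Aut(\nabla^gf|_{W_g^u(q)})\to\Aut(\nabla^gf|_{W_g^u(q')})$, which via the source-analogue of (\ref{eq:Aut-L}) becomes an isomorphism $L_g(q)\to L_g(q')$. Consequently $\beta$ carries $L_g(q)$-orbits on $S_g(q)$ to $L_g(q')$-orbits on $S_g(q')$ and $D\beta$ maps $\aA_g(q)$ into $\aA_g(q')$; evaluating at $\sigma_g^{p,q}z$ in direction $u$ contradicts the displayed non-containment and finishes the proof.

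The main obstacle I anticipate is the equivariance step at the end: verifying that the group isomorphism $L_g(q)\to L_g(q')$ induced by conjugation by $\phi_q$ genuinely intertwines the two $L_g$-actions on $S_g(q)$ and $S_g(q')$, so that $D\beta$ really sends the singular distribution $\aA_g(q)$ into $\aA_g(q')$. This relies on combining the linearisation diagram (\ref{eq:accio-linealitzada}) at both $q$ and $q'$ with the intertwining of the respective vector fields by $\phi_q$, so that $\beta$ is compatible with the infinitesimal actions defining $\aA_g(q)$ and $\aA_g(q')$.
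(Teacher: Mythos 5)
Your proposal is correct and follows essentially the same route as the paper: normalise $\phi(p)=p$ via (C2), identify $\phi|_{W_g^s(p)}$ with an element of $L_g(p)$ through the Kopell/Sternberg isomorphism (\ref{eq:Aut-L}), show that failure to lie in $\{e^{t\xX_g(p)}\gamma'\}$ would place it in some $L_{g,K}(p)$, and then contradict Definition \ref{def:mM_{1,K}} by observing that $\phi$ conjugates $L_g(q)$ to $L_g(q')$ and hence $D(\sigma_g^{p,q'}\circ\alpha_\psi\circ\sigma_g^{q,p})$ carries $\aA_g(q)$ into $\aA_g(q')$. The equivariance step you flag as the main obstacle is exactly the computation the paper performs with the vector fields $\yY_{g,s}$, so your outline matches the intended argument.
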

\begin{proof}
Suppose that $p\in\II$ (the case $p\in\OO$ is dealt with in the same way
with the obvious modificatoins). By property (C2) in the definition of
$\mM_0$ (see Subsection \ref{ss:mM-0}) there exists some $\gamma\in\Gamma$
such that $\phi(p)=\gamma\,p$. Hence, up to composing $\phi$ with the action of $\gamma$,
we can (and do) suppose that $\phi(p)=p$.

Once we know that $\phi$ fixes $p$, we conclude that it
restricts to a diffeomorphism of $W_g^s(p)$ preserving
$\nabla^gf$, which we identify with an element $\phi_p\in
L_g(p)$ via the isomorphism (\ref{eq:Aut-L}). Next, let us
prove that the action of $\phi_p$ on $S_g(p)$ coincides with
the action of some $\gamma\in\Gamma_p$. If this is not the
case, then $\phi_p\in L_{g,K}(p)$ for some natural $K$ (see
Subsection \ref{ss:def-mM-2}). Since $g\in\mM_{1,K}$, it
follows that there exist sources $q,q'\in\OO$ and
$z\in\Omega(p,q)^r$ satisfying
$$
\phi_p z\in\Omega_g(p,q')^r,\qquad \sigma_g^{p,q}z\in F_g(q),
\qquad \sigma_g^{p,q'}\phi_p z\in F_g(q'),$$ and a vector
$u\in\aA_g(q)(\sigma_g^{p,q}(z))$ satisfying
\begin{equation}
\label{eq:contra-1}
D(\sigma_g^{p,q'}\circ\alpha_{\phi_p}\circ\sigma_g^{q,p})(u)\notin
\aA_g(q')(\sigma_g^{p,q'}\circ\alpha_{\phi_p}(z)).
\end{equation}
By the definition of $\aA_g(q)$, we may write
$u=\yY_{g,s}(\sigma_g^{p,q}z)$ for some $s\in\Lie L_g(q)$.

The fact that $z\in\Omega(p,q)^r$ and $\phi_p
z\in\Omega_g(p,q')^r$ implies that $\phi(q)=q'$, so $\phi$ maps
$W_g^u(q)$ diffeomorphically to $W_g^u(q')$; since $\phi$
preserves $\nabla^gf$, $\phi$ induces by conjugation an
isomorphism
$$\psi:L_g(q)\to L_g(q').$$
The corresponding map at the level of Lie algebras associates
to $s$ an element $\psi(s)\in\Lie L_g(q')$, and in fact we have
\begin{align}
D(\sigma_g^{p,q'}\circ\alpha_{\phi_p}\circ\sigma_g^{q,p})(u) &=
D(\sigma_g^{p,q'}\circ\alpha_{\phi_p}\circ\sigma_g^{q,p})(\yY_{g,s}(\sigma_g^{p,q}z)) \notag \\
&=\yY_{g,\psi(s)}(\sigma_g^{p,q'}(\phi_p z)).
\end{align}
The last expression manifestly belongs to
$\aA_g(q')(\sigma_g^{p,q'}\circ\alpha_{\phi_p}(z))$, and this
contradicts (\ref{eq:contra-1}). So we have proved that there
is some $\gamma\in\Gamma_p$ such that $\gamma^{-1}\phi_p$ acts
trivially on $S_g(p)$. Now statement (1) in Lemma
\ref{lemma:restriction-aA} implies that
$\gamma^{-1}\phi_p=e^{t\xX_g(p)}$ for some $t\in\RR$, so we may
write $\phi_p=\gamma e^{t\xX_g(p)}$ or, equivalently, that
$\phi_p(y)=\Phi_t^g(\gamma\,y)$ for every $y\in W_g^s(p)$.
\end{proof}

For any $p\in\EE$ we denote $W_g(p):=W_g^s(p)$ (resp.
$W_g(p):=W_g^u(p)$) if $p\in\II$ (resp. if $p\in\OO$). Now the
proof of the case $\dim M>1$ in Theorem \ref{thm:main} is
concluded as the proof of the main theorem in \cite{TV}. This
is done in two steps. We know there exist
$\{t_p\in\RR\}_{p\in\EE}$ and $\{\gamma_p\in\Gamma\}_{p\in\EE}$
such that $\phi(x)=\Phi_{t_p}^g(\gamma_p x)$ for every
$p\in\EE$ and $x\in W_g(p)$. The first step consists in proving
that if all $\gamma_p$'s are equal then all $t_p$'s are equal
as well (this is \cite[Lemma 5]{TV}). The second step consists
on reducing the general case to the one covered by the first
step. This is explained in the three paragraphs following
\cite[Lemma 5]{TV}.

\appendix

\section{Change of the gradient flow as the metric varies}

Recall what we want to prove.

\begin{lemma}
\label{lemma:variacions-metrica-diff-2} Suppose that
$g\in\mM^{\Gamma}$, $x\in M\setminus\Crit(f)$ and
$y=\Phi^g_t(x)$ for some nonzero $t$. Suppose that the
stabilizer $\Gamma_x$ is trivial. Let $v\in T_xM$ be a nonzero
vector, and let $w=D\Phi^g_t(x)(v)$. Given any $u\in T_v(TM)$
and any $\Gamma$-invariant open subset $U\subset M$ containing
$\Phi^g_{t'}(x)$ for some $t'\in (0,t)$, there exists some
$g'\in\cC^{\infty}(M,S^2T^*M)^{\Gamma}$ supported on $U$ such
that
\begin{equation}
\label{eq:propietat-g-titlla}
\left.\frac{\partial}{\partial\epsilon}D\Phi^{g+\epsilon g'}_{-t}(w)\right|_{\epsilon=0}=u.
\end{equation}
\end{lemma}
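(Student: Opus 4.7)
I will prove the lemma via a Duhamel first-variation formula applied to the gradient flow lifted to $TM$, after a $\Gamma$-equivariant localization. Setting $p_0 := \Phi^g_{t'}(x)$, the $\Gamma$-equivariance of the flow together with $\Gamma_x = \{1\}$ forces $\Gamma_{p_0} = \{1\}$: indeed, $\gamma p_0 = p_0$ would imply $\Phi^g_{t'}(\gamma x) = \Phi^g_{t'}(x)$, hence $\gamma x = x$. I then choose an open neighborhood $V \subset U$ of $p_0$ with $V \cap \gamma V = \emptyset$ for every $\gamma \neq 1$, and shrink $V$ (to a thin tube transverse to $\nabla^g f$ at $p_0$) so that the compact arc $\{\Phi^g_s(x) : s \in [0,t]\}$ meets $\bigsqcup_{\gamma \in \Gamma} \gamma V$ only inside $V$. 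Any $h \in \cC^\infty(M, S^2T^*M)$ supported in $V$ extends uniquely by $\Gamma$-averaging to a $\Gamma$-invariant $g' \in \cC^\infty(M, S^2T^*M)^\Gamma$ supported in $\Gamma V \subset U$.

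Differentiating $(g + \epsilon g')(\nabla^{g+\epsilon g'}f, \cdot) = df$ at $\epsilon = 0$ yields the infinitesimal variation
$$
\dot\xi := \left.\partial_\epsilon \nabla^{g+\epsilon g'}f\right|_{\epsilon=0} = -g^{\sharp}\bigl(g'(\nabla^g f, \cdot)\bigr),
$$
a smooth vector field supported in $\Gamma V$. The lifted flow $\Psi^\epsilon_s(q,\omega) := (\Phi^{g+\epsilon g'}_s(q), D\Phi^{g+\epsilon g'}_s(q)\,\omega)$ on $TM$ is generated by the complete lift $\tilde\xi^\epsilon$ of $\nabla^{g+\epsilon g'}f$, and variation of parameters gives
$$
\left.\frac{\partial}{\partial\epsilon}\right|_{\epsilon=0} D\Phi^{g+\epsilon g'}_{-t}(w) = -\int_0^t D\Psi^g_{s-t}(c(s), w_s)\,\tilde{\dot\xi}(c(s), w_s)\,ds,
$$
where $c(s) := \Phi^g_{-s}(y)$, $w_s := D\Phi^g_{-s}(y)\,w$, and in natural coordinates on $TM$ the complete lift reads $\tilde{\dot\xi}(p,\omega) = (\dot\xi(p), D\dot\xi(p)\,\omega)$.

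Denote by $T$ the linear map sending $h$ (supported in $V$) to the right-hand side above; the goal reduces to showing $T$ is surjective onto $T_v(TM)$. Argue by duality: suppose a nonzero covector $\phi \in T_v(TM)^*$ annihilates the image of $T$, and set $\psi_s := (D\Psi^g_{s-t}(c(s), w_s))^*\phi$, so that
$$
\int_0^t \psi_s\bigl(\tilde{\dot\xi}(c(s), w_s)\bigr)\,ds = 0 \qquad \text{for every admissible } h.
$$
Since $\nabla^g f(p_*) \neq 0$ along the arc, the pointwise assignment $h(p_*) \mapsto \dot\xi(p_*) = -g^{\sharp}(h(\nabla^g f, \cdot))(p_*)$ is surjective onto $T_{p_*}M$, and the first partial derivatives of $h$ at $p_*$ realize arbitrary $D\dot\xi(p_*)$. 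Localizing $h$ near any fixed $p_* := c(s_*) \in V$ therefore forces the pointwise identity $\psi_{s_*}\bigl(\tilde\eta(p_*, w_{s_*})\bigr) = 0$ for every smooth local $\eta$; combined with $w_{s_*} \neq 0$ (the flow propagates $v \neq 0$ to a nonzero $w_s$ at every $s$), the pair $(\eta(p_*), D\eta(p_*)\,w_{s_*})$ ranges over all of $T_{p_*}M \oplus T_{p_*}M$, so $\psi_{s_*} = 0$, and invertibility of $D\Psi^g_{s_*-t}$ yields $\phi = 0$, a contradiction.

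The main obstacle is this surjectivity step, and it hinges on the two nondegeneracies $\nabla^g f(p_*) \neq 0$ and $w_{s_*} \neq 0$. The argument is cleanest when $v$ is not parallel to $\nabla^g f(x)$, so that $w_s$ remains transverse to $c'(s) = -\nabla^g f(c(s))$ along the arc; in the degenerate parallel case $w_s = -c'(s)$, integration by parts in $s$ (via $D\dot\xi(c(s))\,w_s = -\frac{d}{ds}(\dot\xi \circ c)(s)$) reduces the claim to spanning by the $s$-varying block entries of $D\Psi^g_{s-t}$, which follows from the nontrivial $s$-dependence of the Jacobi propagation along the orbit.
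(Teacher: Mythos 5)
Your route (a Duhamel first-variation formula on $TM$ followed by a duality argument) is genuinely different from the paper's, which never writes an integral: it perturbs $\nabla^gf$ only by fields of the special form $L_XY$, for which the variation of the flow map is computed \emph{exactly} as $\wt{Y}$ evaluated at the target endpoint minus the transport of $\wt{Y}$ at the source, and then prescribes the $1$-jet of $Y$ at $x$ itself so that $\wt{Y}(v)=u$. The decisive difference is that the paper's perturbation is supported on a set reaching $x$, whereas yours is confined to a tube $V$ around the interior point $p_0$. This is where your argument breaks: the degenerate case $w_s=\lambda c'(s)$ (equivalently $v$ proportional to $\nabla^gf(x)$) is a genuine gap, and the closing assertion that it ``follows from the nontrivial $s$-dependence of the Jacobi propagation'' is false. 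Work in a flow box around the arc, so that $\nabla^gf=\partial_1$, $\Phi^g_\tau$ is a translation and $D\Phi^g_\tau=\Id$ identically; then the first variation of $D\Phi^{g+\epsilon g'}_{-t}(y)$ is $K(t)=-\int_0^tD\dot\xi(c(\tau))\,d\tau$, and since $D\dot\xi(c(\tau))c'(\tau)=\frac{d}{d\tau}\bigl(\dot\xi\circ c\bigr)(\tau)$, applying $K(t)$ to $w=\lambda e_1$ gives $\lambda\bigl(\dot\xi(x)-\dot\xi(y)\bigr)$, which vanishes for \emph{every} admissible $h$ once $\supp g'$ avoids $x$ and $y$. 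The Jacobi propagation is exactly trivial here, so the vertical block of the image of your map $T$ is $\{0\}$ rather than all of $T_xM$, and no duality argument can recover surjectivity. (Note this is not a corner case one may ignore: in the application in Lemma \ref{lemma:mM-2-K-dense} the set $U$ lies in $f^{-1}((a,b))$ while $f(x_i)>b$, so the support really does avoid the endpoints.)

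Two secondary points. First, even in the non-degenerate case your localization step is looser than stated: one cannot conclude the pointwise identity $\psi_{s_*}\bigl(\wt{\eta}(p_*,w_{s_*})\bigr)=0$ by concentrating $h$ near $p_*$, because shrinking the support makes $D\dot\xi$ of size inverse to the support, and its contribution to the $s$-integral stays of order one. The correct argument keeps the integral, splits $w_s$ into its component along $c'(s)$ (a total $s$-derivative of $\dot\xi\circ c$, which integrates away) and its transverse component (paired with freely prescribable transverse derivatives of $\dot\xi$ along the arc), and applies the fundamental lemma of the calculus of variations; the nonvanishing of the transverse component is precisely your non-degeneracy hypothesis, and with this repair that case does go through. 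Second, your choice of $V$ with the arc meeting $\bigsqcup_\gamma\gamma V$ only in $V$ is legitimate but needs the observation that no $\gamma\neq 1$ can map the orbit of $x$ into itself (because $f$ is $\Gamma$-invariant and strictly increasing along nonconstant orbits), so the arc is disjoint from its nontrivial $\Gamma$-translates.
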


We will prove Lemma \ref{lemma:variacions-metrica-2} using the following weaker version of it.

\begin{lemma}
\label{lemma:variacions-metrica-2}
Let $g,x,t,y$ be as in Lemma \ref{lemma:variacions-metrica-diff-2}.
Given any $v\in T_xM$ and any $\Gamma$-invariant open subset
$U\subset M$ containing $\Phi^g_{t'}(x)$ for some $t'\in
(0,t)$, there exists some
$g'\in\cC^{\infty}(M,S^2T^*M)^{\Gamma}$ supported on $U$ such
that
$$\left.\frac{\partial}{\partial\epsilon}\Phi^{g+\epsilon g'}_{-t}(y)\right|_{\epsilon=0}=v.$$
\end{lemma}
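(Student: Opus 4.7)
The plan is to realize $\eta_{g'} := \partial_\epsilon|_{\epsilon=0}\Phi^{g+\epsilon g'}_{-t}(y)$ as a linear functional of $g'$ and then prove surjectivity of $g' \mapsto \eta_{g'}$ by a localization-plus-duality argument. Differentiating the identity $g(\nabla^g f,\cdot) = df$ in $\epsilon$ gives
$$\dot Z := \partial_\epsilon|_{\epsilon=0}\nabla^{g+\epsilon g'} f = -(g^{-1}g')\nabla^g f,$$
where $g^{-1}g'$ is the $g$-symmetric endomorphism of $TM$ associated with $g'$. A Duhamel-type variation-of-parameters argument along the backward trajectory $p(\tau) := \Phi^g_{-\tau}(y)$ (so $p(0)=y$, $p(t)=x$) then yields
$$\eta_{g'} = -\int_0^t D\Phi^g_{-(t-\tau)}(p(\tau))\cdot\dot Z(p(\tau))\,d\tau\,\in\, T_xM,$$
which is manifestly linear in $g'$.

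Next I would localize the perturbation near $p_0 := \Phi^g_{t'}(x) \in U$, using $\Gamma$-invariance in an essential way. The key observation is that no $\Gamma$-translate $\gamma p_0$ with $\gamma\neq 1$ lies on the flow segment $\{p(\tau):\tau\in[0,t]\}$: if $\gamma p_0 = p(\tau_1)$, then the $\Gamma$-equivariance $\gamma\Phi^g_s = \Phi^g_s\gamma$ (from $g\in\mM^\Gamma$) would force $\gamma x$ onto the same segment, and $\Gamma$-invariance of $f$ combined with strict monotonicity of $f$ along nonconstant gradient trajectories would give $\gamma x = x$, contradicting $\Gamma_x = \{1\}$. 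Hence I choose an open set $N\subset U$ containing $p_0$ so that the $\Gamma$-translates $\{\gamma N\}_{\gamma \in \Gamma}$ are pairwise disjoint and only $N$ meets the flow segment. For any $g'_0\in\cC^\infty(M,S^2T^*M)$ supported in $N$, the symmetrization $g' := \sum_{\gamma\in\Gamma}\gamma^*g'_0$ is $\Gamma$-invariant, supported in $U$, and coincides with $g'_0$ at every point $p(\tau)$ of the flow segment.

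For surjectivity I would argue by duality: suppose $v^* \in T_x^*M$ annihilates every $\eta_{g'}$ of this form. Pairing and substituting the formula for $\dot Z$, the condition becomes
$$0 = \int_0^t g'_0(p(\tau))\bigl(\xi(\tau)^\sharp,\,\nabla^g f(p(\tau))\bigr)\,d\tau$$
for every such $g'_0$, where $\xi(\tau) := (D\Phi^g_{-(t-\tau)}(p(\tau)))^*v^*$ and $\sharp$ denotes the $g$-musical isomorphism. Taking $g'_0 = \chi_n B$ with bump functions $\chi_n$ concentrating on $p_0$ and a fixed symmetric bilinear form $B$ defined near $p_0$, the integral converges (after suitable normalization of $\chi_n$) to a nonzero multiple of $B(p_0)(\xi(\tau_0)^\sharp,\nabla^g f(p_0))$, where $\tau_0 := t-t'$. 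Since $\nabla^g f(p_0) \neq 0$ (as $p_0 \notin \Crit(f)$) and $\xi(\tau_0) \neq 0$ (because $v^* \neq 0$ and $(D\Phi^g_{-t'}(p_0))^*$ is an isomorphism), a symmetric $B$ making this pairing nonzero exists, contradicting the assumption. Hence $g'_0 \mapsto \eta_{g'}$ is surjective and the lemma follows.

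The main subtlety will be reconciling the required $\Gamma$-invariance of $g'$ with the need to effectively localize the perturbation near a single interior point $p_0$; the monotonicity step above, which crucially uses both $\Gamma_x=\{1\}$ and $\Gamma$-invariance of $f$, is precisely what prevents unwanted $\Gamma$-translates of $p_0$ from contaminating the integral along the flow segment.
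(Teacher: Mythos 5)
Your argument is correct, but it takes a genuinely different route from the paper's. The paper proceeds constructively: it first shows (Lemma \ref{lemma:metriques-adaptades}, a pointwise linear-algebra submersion argument) that any $\Gamma$-invariant perturbation of $\nabla^gf$ supported off $\Crit(f)$ is realized by a path of invariant metrics, and then invokes the Lie-derivative identity of Lemma \ref{lemma:variacio-transport} --- perturbing $X$ by $sL_XY$ displaces the time-$t$ flow, to first order, by $Y$ evaluated at the endpoint --- so the required variation is produced by an explicitly constructed invariant vector field. You instead linearize the defining equation of the gradient to get $\dot Z=-(g^{-1}g')\nabla^gf$, express the first variation of the backward flow by Duhamel, and prove surjectivity of $g'\mapsto\eta_{g'}$ by duality, concentrating bump tensors at the single interior point $p_0=\Phi^g_{t'}(x)$. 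Both proofs rest on the same equivariant localization --- triviality of stabilizers along the trajectory, which you derive from strict monotonicity of $f$ along gradient lines and $\Gamma_x=\{1\}$, and which the paper uses in the same way to average a locally defined vector field --- but your duality argument makes the role of the hypothesis $\Phi^g_{t'}(x)\in U$ more transparent and perturbs the metric only near one interior point, whereas the paper's Lie-derivative trick is what later lifts painlessly to $TM$ for the derivative version (Lemma \ref{lemma:variacions-metrica-diff}). Two points worth making explicit when you write this up: the concentration step uses that $\tau\mapsto p(\tau)$ is injective (no periodic gradient orbits, again by monotonicity of $f$), so that $\supp\chi_n$ meets the flow segment only for $\tau$ near $\tau_0$; and the final nonvanishing of $B(p_0)(\xi(\tau_0)^\sharp,\nabla^gf(p_0))$ for some symmetric $B$ is obtained by taking $B=\beta\otimes\beta$ with $\beta$ a covector annihilating neither $\xi(\tau_0)^\sharp$ nor $\nabla^gf(p_0)$.
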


Before proving Lemma \ref{lemma:variacions-metrica-2} we prove two auxiliary lemmas.

\begin{lemma}
\label{lemma:metriques-adaptades}
Let $g\in\mM^{\Gamma}$.
Let $Y\in\cC^{\infty}(M;TM)^{\Gamma}$ satisfy $\supp Y\cap\Crit(f)=\emptyset$.
There exists some $\delta>0$ and a smooth map $G:(-\delta,\delta)\to\mM^{\Gamma}$ such that
$G(0)=g$ and, for every $\epsilon\in(-\delta,\delta)$,
$\nabla^{G(\epsilon)}f=\nabla^gf+\epsilon Y.$
\end{lemma}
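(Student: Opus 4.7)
The plan is to construct $G(\epsilon)$ by an explicit pointwise formula, avoiding any partition-of-unity gluing. Set $X=\nabla^gf$ and $X_\epsilon=X+\epsilon Y$. Since $\supp Y\cap\Crit(f)=\emptyset$ and $M$ is compact, $df(X)=|X|_g^2$ is bounded below by some $c>0$ on $\supp Y$; hence there is $\delta>0$ such that $df(X_\epsilon)>0$ on $\supp Y$ for all $|\epsilon|<\delta$. Note also that the hyperplane distribution $H:=\ker df$ on $M\setminus\Crit(f)$ is independent of the metric.

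The defining conditions I would like to impose on $G(\epsilon)$ at each point $p\notin\Crit(f)$ are: (i) $G(\epsilon)|_{H\times H}=g|_{H\times H}$, (ii) $X_\epsilon$ is $G(\epsilon)$-orthogonal to $H$, and (iii) $G(\epsilon)(X_\epsilon,X_\epsilon)=df(X_\epsilon)$. These three conditions uniquely determine a bilinear form via the splitting $T_pM=\RR X_\epsilon\oplus H$, leading to the explicit expression
$$G(\epsilon)_p(u,v)=\frac{df(u)\,df(v)}{df(X_\epsilon)}+g_p\!\left(u-\tfrac{df(u)}{df(X_\epsilon)}X_\epsilon,\ v-\tfrac{df(v)}{df(X_\epsilon)}X_\epsilon\right).$$
The key observation, verified by a direct expansion using $g(X,u)=df(u)$, is that whenever $Y_p=0$ the right hand side collapses to $g_p(u,v)$. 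Consequently the formula automatically agrees with $g$ on the complement of $\supp Y$, so setting $G(\epsilon):=g$ on an open neighborhood of $\Crit(f)$ disjoint from $\supp Y$ yields a smooth extension to all of $M$ and a smooth dependence on $\epsilon$.

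Once the formula is in hand, the remaining verifications are short. First, $G(0)=g$ by direct substitution. Second, $G(\epsilon)$ is positive definite: writing $u=tX_\epsilon+u_H$ with $u_H\in H$ gives $G(\epsilon)(u,u)=t^2\,df(X_\epsilon)+g(u_H,u_H)\geq 0$, with equality only when $u=0$ (valid for $|\epsilon|<\delta$, which ensures $df(X_\epsilon)>0$ wherever the formula is used rather than the extension). Third, $\Gamma$-invariance is inherited from the $\Gamma$-invariance of $g$, $df$, $X$ and $Y$. Finally, the identity $\nabla^{G(\epsilon)}f=X_\epsilon$ follows because for any $v=tX_\epsilon+v_H$, conditions (ii) and (iii) give $G(\epsilon)(X_\epsilon,v)=t\,df(X_\epsilon)=df(v)$. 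No step presents a serious obstacle; the only nontrivial conceptual point is choosing the splitting-based construction that produces a metric automatically matching $g$ off of $\supp Y$, thereby making the smooth extension across $\Crit(f)$ essentially tautological.
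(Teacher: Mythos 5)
Your proof is correct, and it takes a genuinely more explicit route than the paper's. The paper disposes of this lemma in one line by appealing to an abstract linear-algebra fact: for a fixed nonzero $\alpha\in V^*$, the map $\eE\to V$ sending a Euclidean pairing $e$ to the vector $\nabla(e)$ with $e(\nabla(e),\cdot)=\alpha$ is a submersion with contractible fibers. That argument leaves implicit how one actually selects the lift $\epsilon\mapsto G(\epsilon)$ smoothly in the point of $M$, equivariantly, and across the locus where $Y$ vanishes and the target vector degenerates back to $\nabla^gf$. Your construction supplies exactly such a selection in closed form: the splitting $T_pM=\RR X_\epsilon\oplus\ker df$ together with conditions (i)--(iii) determines a canonical preimage of $X_\epsilon$ in the fiber over it, and the resulting formula is manifestly symmetric, positive definite for $|\epsilon|<\delta$ (using $df(X_\epsilon)>0$ on $M\setminus\Crit(f)$, which your choice of $\delta$ guarantees), $\Gamma$-invariant because $f$, $g$, $Y$ are, and jointly smooth in $(\epsilon,p)$. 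The key computation — that the formula collapses to $g_p$ wherever $Y_p=0$, using $g(X,u)=df(u)$ — is exactly what makes the extension by $g$ over a neighborhood of $\Crit(f)$ tautological, so no partition of unity or separate equivariant averaging is needed. In short, what the paper obtains by an existence argument you obtain by exhibiting an explicit global equivariant section of the submersion; your version is self-contained and arguably more complete, at the cost of a slightly longer verification.
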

\begin{proof}
This is a consequence of the following elementary fact in linear algebra. Let
$V$ be a finite dimensional real vector space and let $\alpha\in V^*$ be a nonzero element.
Let $\eE\subset S^2V^*$ be the open subset of Euclidean pairings, and let
$\nabla:\eE\to V$ be the map defined by the property that $e(\nabla(e),u)=\alpha(u)$
for every $u\in V$. Then $\nabla$ is a submersion with contractible fibers.
\end{proof}

For any vector field $X$ on $M$ we denote by $\Phi^X_t:M\to M$
the flow at time $t$ of $X$.

\begin{lemma}
\label{lemma:variacio-transport}
Let $X,Y\in\cC^{\infty}(M;TM)$. Suppose that $p\notin\supp Y$ and that $X$ has no regular periodic integral curve. For any $t$ we have
$$\left.\frac{\partial}{\partial s}\Phi^{X+sL_XY}_t(p)\right|_{s=0}=Y(\Phi^X_t(p)).$$
\end{lemma}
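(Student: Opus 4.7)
My plan is to invoke the standard variation-of-parameters formula for flows of smooth families of vector fields: if $X_s$ is a smooth family on a compact manifold with $X_0 = X$ and $Z := \partial_s X_s|_{s=0}$, then
$$\left.\frac{\partial}{\partial s}\Phi^{X_s}_t(p)\right|_{s=0} = \int_0^t D\Phi^X_{t-\tau}\bigl(\Phi^X_\tau(p)\bigr)\cdot Z\bigl(\Phi^X_\tau(p)\bigr)\,d\tau.$$
Applied to $X_s = X + s\,L_XY$, so that $Z = [X,Y]$, this yields an explicit integral expression for the left-hand side of the desired identity.

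Next I would rewrite the integrand using the pullback identity $\frac{d}{d\tau}(\Phi^X_\tau)^*Y = (\Phi^X_\tau)^*L_XY$, which evaluated at $p$ reads
$$\frac{d}{d\tau}\bigl[D\Phi^X_{-\tau}(\Phi^X_\tau(p))\cdot Y(\Phi^X_\tau(p))\bigr] = D\Phi^X_{-\tau}(\Phi^X_\tau(p))\cdot [X,Y](\Phi^X_\tau(p)).$$
Factoring the chain-rule identity $D\Phi^X_{t-\tau}|_{\Phi^X_\tau(p)} = D\Phi^X_t|_p \circ D\Phi^X_{-\tau}|_{\Phi^X_\tau(p)}$ out of the integrand and applying the fundamental theorem of calculus collapses the integral to
$$D\Phi^X_t(p)\bigl[(\Phi^X_t)^*Y(p) - Y(p)\bigr] = Y(\Phi^X_t(p)) - D\Phi^X_t(p)\cdot Y(p).$$

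Finally, the hypothesis $p\notin\supp Y$ forces $Y(p)=0$, so the right-hand side reduces to $Y(\Phi^X_t(p))$, as desired.

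I do not anticipate a serious obstacle here: the only delicate points are the applicability of the variation-of-parameters formula (standard because $M$ is compact, so $X + sL_XY$ is complete for $|s|$ small) and the sign bookkeeping in the chain-rule manipulation. Notably, the hypothesis that $X$ has no regular periodic integral curve plays no role in the algebraic identity above; it is presumably needed in the intended application of this lemma (where one wants to arrange supports along a trajectory so that they remain disjoint from $p$), rather than in the identity itself.
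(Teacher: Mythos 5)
Your proof is correct, but it follows a genuinely different route from the paper's. The paper handles the case $X(p)=0$ separately and otherwise builds a ``long flow box'': since $X$ has no regular periodic integral curve, the arc $\{\Phi^X_\tau(p)\mid 0\leq\tau\leq t\}$ is embedded, so one can choose coordinates on a neighborhood of it in which $X=\partial/\partial x_1$; the identity then drops out of an explicit computation exchanging $\partial_t$ and $\partial_s$ and integrating the relation $a_j=\partial b_j/\partial x_1$ between the components of $L_XY$ and $Y$. This is where the no-periodic-orbit hypothesis is genuinely used in the paper. You instead invoke the global Duhamel (variation-of-parameters) formula and collapse the integral via the pullback identity $\frac{d}{d\tau}(\Phi^X_\tau)^*Y=(\Phi^X_\tau)^*L_XY$, obtaining the stronger, hypothesis-free identity
\begin{equation*}
\left.\frac{\partial}{\partial s}\Phi^{X+sL_XY}_t(p)\right|_{s=0}=Y(\Phi^X_t(p))-D\Phi^X_t(p)\cdot Y(p),
\end{equation*}
from which the lemma follows since $Y(p)=0$. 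Your approach is coordinate-free, treats the case $X(p)=0$ uniformly, and correctly exposes the no-periodic-orbit hypothesis as unnecessary for the identity itself; the paper's approach is more elementary, requiring only the straightening theorem rather than the Duhamel formula for parameter-dependent flows. Both are complete proofs.
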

\begin{proof}
If $X(p)=0$ then the formula is immediate. So suppose that $X(p)\neq 0$.
Then $Z:=\{\Phi^X_{\tau}\mid 0\leq\tau\leq t\}$ is diffeomorphic to $[0,1]$, because
$X$ has no regular periodic integral curve. Take an open neighborhood $U\subset M$ of $Z$ and
coordinates $x=(x_1,\dots,x_n):U\to\RR^n$ with respect to which $p=(0,\dots,0)$
and $X=\frac{\partial}{\partial x_1}$, so that $\Phi^X_\tau(x_1,\dots,x_n)=(x_1+\tau,x_2,\dots,x_n)$.
Suppose that $x_*(L_XY|_U)=\sum a_j\frac{\partial}{\partial x_j}$ and that
$x_*(Y|_U)=\sum b_j\frac{\partial}{\partial x_j}$. Since $Y(p)=0$, we have
$b_j(t,0,\dots,0)=\int_0^ta_j(\tau,0,\dots,0)\,d\tau$ for every $j$.
Let $\gamma(t,s)=x(\Phi^{X+sL_XY}_t(p))$.
Let $e_1,\dots,e_n$ denote the canonical basis of $\RR^n$. We have
\begin{align*}
\left.\frac{\partial}{\partial t}\frac{\partial\gamma(t,s)}{\partial s}\right|_{s=0} &=
\left.\frac{\partial}{\partial s}\frac{\partial\gamma(t,s)}{\partial t}\right|_{s=0}=
\left.\frac{\partial}{\partial s}\left((1,0,\dots,0)+s\sum a_j(\gamma(t,s))\,e_j\right)\right|_{s=0} \\
&=\sum a_j(\gamma(t,0)) e_j=\sum a_j(t,0,\dots,0)\,e_j.
\end{align*}
Consequently, $$\left.\frac{\partial\gamma(t,s)}{\partial s}\right|_{s=0}=
\sum\left(\int_0^ta_j(\tau,0,\dots,0)\,d\tau\right)\,e_j=
\sum b_j(t,0,\dots,0)\,e_j=
x_*(Y(\Phi^X_t(p))),$$
which proves the desired formula.
\end{proof}

Let us now prove Lemma \ref{lemma:variacions-metrica-2}.
Fix some $g\in\mM^{\Gamma}$, let $x\in M\setminus\Crit(f)$, and
let $y=\Phi^g_t(x)$ for some nonzero $t$. Suppose that the
stabilizer $\Gamma_x$ is trivial, let $v\in T_xM$ be any
element and let $U\subset M$ be a $\Gamma$-invariant open
subset containing $\Phi^g_{t'}(x)$ for some $t'\in (0,t)$. Let
$X=\nabla^gf$. Since $x\notin\Crit(f)$ and $\Gamma_x$ is
trivial, there exists an invariant vector field
$Y\in\cC^{\infty}(M;TM)^{\Gamma}$ whose support is contained in
$U\setminus\Crit(f)$ and which satisfies $Y(x)=v$. By Lemma
\ref{lemma:variacio-transport} we have
$$\left.\frac{\partial}{\partial s}\Phi^{X+sL_XY}_{-t}(y)\right|_{s=0}=Y(x).$$
By Lemma \ref{lemma:metriques-adaptades} there exists some
$\delta>0$ and a family of metrics
$\{g_{\epsilon}\}_{\epsilon\in(-\delta,\delta)}$ satisfying
$g_0=g$ and $\nabla^{g_\epsilon}f=\nabla^gf+\epsilon Y$ for
every $\epsilon\in(-\delta,\delta)$. Setting $g'=\partial
g_{\epsilon}/\partial\epsilon|_{\epsilon=0}$ it follows that
$$\left.\frac{\partial}{\partial\epsilon}\Phi^{g+\epsilon g'}_{-t}(y)\right|_{\epsilon=0}=v.$$

Finally we prove Lemma \ref{lemma:variacions-metrica-diff}/\ref{lemma:variacions-metrica-diff-2}.

Let $\pi:TM\to M$ denote the projection and let $D\pi:T(TM)\to
TM$ denote its derivative. A vector field $X$ on $M$ defines a
vector field $\wt{X}$ on $TM$ by the condition that
\begin{equation}
\label{eq:def-titlla}
D\Phi_t^X=\Phi_t^{\wt{X}}
\end{equation}
for every $t$. In particular, $\Phi_{-t}^{\wt{X}}(w)=v$.

We will use these properties of the map
$X\mapsto\wt{X}$: (1) it is linear, (2)
$D\pi(\wt{X}(u))=X(\pi(u))$ for every vector field $X$ and any
$u\in TM$, (3) if $X(a)=0$ for some point $a\in M$, then the
restriction of $\wt{X}$ to $T_aM$ is vertical and can be
identified with the linear vector field $T_aM\to T_aM$ given by
the endomorphism $DX(a)$ of $T_aM$, and (4) it is compatible
with Lie brackets: $\wt{[X,Y]}=[\wt{X},\wt{Y}]$.

Let $g\in\mM^{\Gamma}$, let $x\in M\setminus\Crit(f)$ be a
point with trivial stabilizer, and let $y=\Phi^g_t(x)$ for some
nonzero $t$. Let $v\in T_xM$ be nonzero and let
$w=D\Phi^g_t(x)(v)$. Finally, suppose given $u\in T_v(TM)$ and
a $\Gamma$-invariant open subset $U\subset M$ containing
$\Phi^g_{t'}(x)$ for some $t'\in (0,t)$.

Let $X=\nabla^gf$. Since $x\notin\Crit(f)$ and $\Gamma_x$ is
trivial, there exists an invariant vector field
$Y_0\in\cC^{\infty}(M;TM)^{\Gamma}$ whose support is contained
in $U\setminus\Crit(f)$ and which satisfies $Y_0(x)=D\pi(u)$.
Then $u_1=\wt{Y_0}(v)-u$ satisfies $D\pi(u_1)=0$. Let
$L:T_xM\to T_xM$ be a linear map satisfying $Lv=u_1$, and let
$Y_1\in\cC^{\infty}(M;TM)^{\Gamma}$ have support contained in
$U\setminus\Crit(f)$ and satisfy $Y_1(x)=0$ and $DY_1(x)=L$.
Let $Y=Y_0+Y_1$. Then $\wt{Y}(v)=u$. By (\ref{eq:def-titlla}), the properties
of the map $X\mapsto\wt{X}$, and Lemma
\ref{lemma:variacio-transport} (with $M$ replaced by $TM$ in the
statement), we have
$$\left.\frac{\partial}{\partial
s}D\Phi^{X+sL_XY}_{-t}(w)\right|_{s=0} = \left.\frac{\partial}{\partial
s}\Phi^{\wt{X+sL_XY}}_{-t}(w)\right|_{s=0}
=\left.\frac{\partial}{\partial s}\Phi^{\wt{X}+sL_{\wt{X}}\wt{Y}}_{-t}(w)\right|_{s=0}
=\wt{Y}(v).$$ By Lemma
\ref{lemma:metriques-adaptades} there exists some $\delta>0$
and a smooth map $G:(-\delta,\delta)\to\mM^{\Gamma}$
satisfying
$G(0)=g$ and $\nabla^{G(\epsilon)}f=\nabla^gf+\epsilon Y$ for
every $\epsilon\in(-\delta,\delta)$. Then $g'=G'(0)$ satisfies (\ref{eq:propietat-g-titlla}).

\section{Glossary}

Here we list in alphabetical order some of the symbols used in Section \ref{s:sternberg} and the next ones.
\begin{align*}
\aA_g(p) =& \,\,\text{the distribution on $S_g(p)^r$ given by the infinitesimal diagonal action} \\
&\,\,\text{of the Lie algebra of $L_g(p)$, see Subsection \ref{ss:singular-distributions}}\\
\alpha_\psi =& \,\,\text{the map $S_g(p)^r\to S_g(p)^r$ given by the action of $\psi\in L_g(p)$}\\
\EE =& \,\,\II\cup\OO \\
f:M\to\RR =& \,\,\text{a $\Gamma$-invariant Morse function on $M$, see Subsection \ref{ss:sink,sources}} \\
F_g(p) =& \,\,\text{a dense open subset of $S_g(p)^r$ on which the only elements of $L_g(p)$ with} \\
&\,\,\text{fixed points are those of the form $D\Phi_g^t(p)$, see Subsection \ref{ss:F-g-q}} \\
\Phi_g^t =& \,\,\text{the time $t$ gradient flow of $f$ w.r.t. the metric $g$}\\
\Gamma =& \,\,\text{a finite group acting smoothly and effectively on $M$}\\
\II =& \,\,\text{the critical points of $f$  of index $n=\dim M$ (sinks of $\nabla^gf$)} \\
L_g(p) =& \,\,\text{the automorphisms of $T_pM$ commuting with $D\nabla^gf(p)$, see Subsection
\ref{ss:sink,sources};} \\
&\,\,\text{if $p\in\II$ and $g\in\mM_0$ then $L_g(p)$ is naturally isomorphic to
$\Aut(\nabla^gf|_{W_g^s(p)})$}\\
&\,\,\text{(if $p\in\OO$ then the same holds for $W_g^u(p)$), see Subsection \ref{ss:mM-0}} \\
M =& \,\,\text{a compact connected smooth manifold of dimension at least $2$}\\
\mM_0 =& \,\,\text{the set of $\Gamma$-invariant metrics on $M$ defined in Subsection \ref{ss:mM-0}}\\
\mM_{1,K} =& \,\,\text{the set of $\Gamma$-invariant metrics on $M$ defined in Section \ref{s:mM-2-K}}\\
\OO =& \,\,\text{the critical points of $f$ of index $0$ (sources of $\nabla^gf$)} \\
\Omega_g(p,q) =& \,\,\text{the projection to $S_g(q)$ of $W_g^s(p)\cap W_g^u(q)$ if $p\in\II$ and $q\in \OO$, and the} \\
&\,\,\text{projection of $W_g^u(p)\cap W_g^s(q)$ if $p\in\OO$ and $q\in \II$, see Subsection \ref{ss:spheres}}\\
S_g(p) =& \,\,\text{the set of nonconstant integral curves of $\nabla^gf|_{W^s_g(p)}$
(resp. $\nabla^gf|_{W^u_g(p)}$)}\\ 
&\,\,\text{if $p\in\II$ (resp. $p\in\OO$), see Subsection \ref{ss:spheres}}\\
\sigma_g^{p,q} =& \,\,\text{the natural isomorphism $\Omega_g(p,q)\to\Omega_g(q,p)$, see Subsection \ref{ss:spheres}} \\
W_g^{s}(p) = &\,\,\text{the stable set of $p\in\II$} \\
W_g^{u}(q) = &\,\,\text{the unstable set of $q\in\OO$} \\
\end{align*}

\end{document}